\def\ts{ {}^{t}\hspace*{-.05cm}  \star   }
\def\thmhead@plain#1#2#3{%
  \thmname{#1}\thmnumber{\@ifnotempty{#1}{ }\@upn{#2}}%
  \thmnote{ {\the\thm@notefont#3}}}
\let\thmhead\thmhead@plain
 \newtheorem{teo}{Theorem}[section]
 \newtheorem{cor}[teo]{Corollary}
 \newtheorem{lem}[teo]{Lemma}
 \newtheorem{pro}[teo]{Proposition}
 \theoremstyle{definition}
 \newtheorem{defi}[teo]{Definition}
 \newtheorem{exa}[teo]{Example}
 \newtheorem{rmk}[teo]{Remark}
\def\3{{\sf 3}}
\def\A{\mathcal{A}}
\DeclareMathOperator{\op}{op}
\newcommand{\calR}{\mathcal{R}}
\newcommand{\calA}{\mathcal{A}}
\DeclareMathOperator{\End}{\mathrm{End}}
\DeclareMathOperator{\rmL}{\mathrm{L}}
\DeclareMathOperator{\rmR}{\mathrm{R}}
\DeclareMathOperator{\rmD}{\mathrm{D}}
\DeclareMathOperator{\ann}{Ann}
\DeclareMathOperator{\ad}{ad}
\DeclareMathOperator{\sgn}{sgn}
\DeclareMathOperator{\asso}{Asso}
\begin{document}

\title[Bialgebra theory for nearly associative algebras and $LR$-algebras]
{Bialgebra theory for nearly associative algebras and $LR$-algebras: equivalence, characterization, and $LR$-Yang-Baxter Equation}

\author[E.~Barreiro]{Elisabete~Barreiro}

\address{Elisabete~Barreiro, University of Coimbra, Department of Mathematics, CMUC,
Largo D. Dinis
3000-143 Coimbra, Portugal.
{\em E-mail address}: {mefb@mat.uc.pt}}



\author[S.~Benayadi]{Sa\"{i}d~Benayadi}

\address{Sa\"{i}d~Benayadi, Universit\'{e} de Lorraine, Laboratoire IECL, CNRS-UMR 7502, UFR MIM, 3 rue Augustin Frenel, BP 45112, 57073 Metz Cedex 03, France. {\em E-mail address}: {said.benayadi@univ-lorraine.fr}}


\author[C.~Rizzo]{Carla Rizzo}

\address{Carla~Rizzo, University of Coimbra, Department of Mathematics, CMUC,
Largo D. Dinis
3000-143 Coimbra, Portugal. {\em E-mail address}: {carlarizzo@mat.uc.pt}}

\thanks{E.~Barreiro  and C.~Rizzo were partially supported by the Centre for Mathematics of the University of Coimbra (funded by the Portuguese Government through FCT/MCTES, DOI 10.54499/UIDB/00324/2020)}


\keywords{Nearly associative algebras; $LR$-algebras; bialgebras;  representations of non-associaitve algebras; Yang-Baxter equation.}
\subjclass[2020]{Primary: 17A30, 17B62, 17B38; Secondary: 17D25, 17C50, 17A20}

\begin{abstract}
We develop the bialgebra theory for two classes of non-associative algebras: nearly associative algebras and $LR$-algebras. In particular, building on recent studies that reveal connections between these algebraic structures, we establish that nearly associative bialgebras and $LR$-bialgebras are, in fact, equivalent concepts. We also provide a cha\-racterization of these bialgebra classes based on the coproduct.
Moreover, since the development of nearly associative bialgebras — and by extension, $LR$-bialgebras — requires the framework of nearly associative $L$-algebras, we introduce this class of non-associative algebras and explore their fundamental properties.
Furthermore, we identify and characterize a special class of nearly associative bialgebras, the coboundary nearly associative bialgebras, which provides a natural framework for studying the Yang-Baxter equation (YBE) within this context. 
\end{abstract}

\maketitle

\section{Introduction}
  
A nearly associative algebra is a non-associative algebra, i.e., a not necessarily associative algebra, over a field $\mathbb{K}$ satisfying the multilinear identity in three variables: $(xy)z=y(zx)$. This class of non-associative algebras was first approached in \cite{DS} and, then, has been deeply studied in \cite{BBR2024}.   Further information on this class can be found in \cite{AKS}.  
Another significant class of algebras is the one of $LR$-algebras, also known as bicommutative algebras. These algebras are non-associative algebras over a field $\mathbb{K}$ satisfying both right-commutativity and left-commutativity, i.e., the identities $(xy)z=(xz)y$ and $x(yz)=y(xz)$. In particular, a non-associative algebra is called an $R$-algebra if it satisfies only right-commutativity, and an $L$-algebra if it satisfies only left-commutativity.
One-sided commutative algebras appeared first in the paper \cite{Cayley1857} due to Cayley in 1857, and they emerge in contexts such as the study of hydrodynamic type equations \cite{BalNovi1985}. In recent years, the theory of $LR$-algebras has been intensively studied  (see for example  \cite{Burde, Drensky2019, Drensky2018, DIT2011, DT2003}).
Even though in general the above two classes of non-associative algebras are distinct, the authors in \cite{BBR2024} show a connection between them. They prove that if $A$ is a non-associative algebra equipped with an invariant, non-degenerate, and symmetric bilinear form (referred to as quadratic algebra), then $A$ is nearly associative if and only if it is an $LR$-algebra.

Bialgebras are algebraic structures that arise from a combination of a multiplication and a comultiplication, subject to specific compatibility conditions. Notable examples of bialgebras include associative bialgebras and Lie bialgebras.
Drinfeld's introduction of Lie bialgebras \cite{Dri1983} has had a significant impact, particularly in the context of integrable models, where they are intimately connected to the classical Yang-Baxter equation. For associative algebras, Joni and Rota \cite{JR1995} introduced the concept of an infinitesimal bialgebra to provide a formal framework for the calculus of divided differences. Subsequently, Aguiar expanded this idea into a comprehensive theory of infinitesimal bialgebras, drawing parallels with the well-established theory of Lie bialgebras \cite{Agu2000, Agu2001}.
Zhelyabin generalized Drinfeld's construction to an arbitrary variety of non-associative algebras and studied the Jordan case (cf. \cite{Zhe1997}).  We also refer the interested reader to \cite{Bai2008, BB2009, BB2016, DH2016} for more bialgebra structure results on other classes of non-associative algebras.

The goal of this paper is to develop a bialgebra theory specifically for nearly associative algebras. 
We use Drinfeld's construction of the double of a Lie algebra, generalized by Zhelyabin to any non-associative algebra, to define the class of nearly associative bialgebras. But, since double of algebras are quadratic algebras, the notions of nearly associative bialgebra and $LR$-bialgebra are equivalent, effectively extending the theory to the class of $LR$-algebras as well. We further broaden the theory by proving the equivalence of nearly associative bialgebras with $R$-bialgebras and $L$-bialgebras.
Moreover, since such a theory is only feasible in the context of nearly associative $L$-algebras, i.e., nearly associative algebras satisfying left-commutativity, we introduce this class of algebras and investigate some of its fundamental properties.
Furthermore, given the importance of the classical Yang-Baxter equation and coboundary Lie bialgebras, we define and explore nearly associative analogs of these concepts.
It should be noted that in \cite{DS} there is an attempt at a definition of nearly associative bialgebras using the notion of matched pairs of nearly associative algebras.

The paper is structured as follows. Section~\ref{Section: Basic definitions and results} provides the necessary background on nearly associative algebras and $LR$-algebras, covering basic definitions and properties. In Section~\ref{Section: Representation of NA algebras}, we present the representation theory of nearly associative algebras. We specifically define both the adjoint and coadjoint representations, establish necessary and sufficient conditions for the existence of the coadjoint representation, and present a general result applicable to any non-associative algebra (Corollary \ref{cor:existence of quadratic}). 
Section~\ref{Section: NAL-algebras} is devoted to the class of nearly associative $L$-algebras and presents some of their structural properties.
In Section~\ref{Section: Coalgebras}, we discuss  nearly coassociative coalgebras, $L$-coalgebras, and $R$-coalgebras, along with their properties.
In Section~\ref{Section: NA bialgebras}, we introduce nearly associative bialgebras through the double construction of nearly associative $L$-algebras in the sense of Drinfeld. We establish the equivalence between nearly associative bialgebras and the concepts of $R$-bialgebra, $L$-bialgebra, and $LR$-bialgebra, and provide a characterization of nearly associative bialgebras based on the coproduct (Theorem \ref{teo: nearly ass bialgebras comultiplication}). Finally, Section~\ref{Section: Coboundary NA bialgebras} is dedicated to coboundary nearly associative bialgebras, which serve as a natural framework for studying analogs of the classical Yang-Baxter equation and $r$-matrices in this context.

Throughout this paper, all vector spaces are assumed to be finite-dimensional over field $\mathbb{K}$ of characteristic zero.
We will always work with non-associative algebras, which we sometimes refer to just as algebras for simplicity.

\section{Basic definitions and results}\label{Section: Basic definitions and results}

In this section, we present some fundamental definitions and results required for the following sections.
Let $(A, \cdot )$ be an algebra over $\mathbb{K}$, where the product is often denoted by juxtaposition to simplify the presentation when confusion does not arise.
As usual, we denote by $ \rmL,\rmR : A \longrightarrow \End(A)$ the operators of left and right multiplications, respectively, i.e., 
for $x \in  A$, the \textit{left} (resp. \textit{right}) \textit{multiplication} by $x$ is the endomorphism $\rmL_x =\rmL(x):    A \longrightarrow  A$ (resp. $\rmR_x =\rmR(x):   A \longrightarrow  A$) of $ A$ defined by 
 \begin{equation*}
\begin{split}
\rmL_x(y):=x y \hspace*{1cm}
\end{split}
\begin{split}
\Big(\mbox{resp. }  \rmR_x(y):=y  x \Big),
\end{split}
\end{equation*}
for $y\in A$.
We write the usual composition of endomorphism by juxtaposition.

The \textit{associator}  of $A$ is the trilinear map  $\mbox{Asso}: A \times A \times  A \longrightarrow  A$ defined by
\begin{equation*}
\asso(x,y,z):=(x y) z-x(y z),
 \end{equation*}
for $x,y,z\in  A$.  If for all $x,y,z\in A$, $\asso(x,y,z)=0$, $A$ is {\it associative}, otherwise $A$ is {\it not associative}. Also, recall that a non-associative algebra $A$ is said \textit{flexible} if $\asso(x,y,x)=0$ for all $x,y \in A$, or equivalently, if $\asso(x,y,z)+\asso(z,y,x)=0$  for all $x,y,z \in A$.
The {\it annihilator} of $A$ is $$\ann(A):=\{x \in A \, | \, xy=yx=0 \mbox{ for all } y \in A\}.$$
Since $\mathbb{K}$ has characteristic different from $2$, we may define the following  two new products on the  underline vector space $A $: for $  x,y \in A $,
 \begin{eqnarray}
 && [x,y]:= \frac{1}{2}( x y-y x ),\,\,\,\, x\bullet y:=  \frac{1}{2}( x y+y x). 
 \label{eq:A-A+}
 \end{eqnarray}
We denote ${A}^-:= ({A}, [-,-])$ and ${A}^+:=({A},  \bullet  )$. 
The product of the algebra $A$ can be written in terms of  these two new products as follows:  for $  x,y \in A $,
 \begin{eqnarray}
 && xy  = [x,y] + x\bullet y.
 \label{eq:cdot}
 \end{eqnarray}
We denote the associator of ${A}^+ =({A},  \bullet  )$ by $\asso^+$.

\begin{defi}
A non-associative algebra $\calA$ is said \textit{nearly associative} if
\begin{equation}
 \label{eq: nearly associative algebra}
        (x y)  z=y (z x),
    \end{equation}
for all $x,y,z\in \calA$.
\end{defi}

\begin{rmk}    
\label{rmk: left and righ multiplication}
    If $\calA$ is a nearly associative algebra, then, for all $ x,y \in\calA$,
    \begin{eqnarray}
  && \rmL_x   \rmL_y=\rmR_y   \rmR_x,  \label{eq:lrm1}\\
&& \rmL_x   \rmR_y=\rmL_{y  x},  \label{eq:lr2}\\
 && \rmR_x   \rmL_y=\rmR_{x  y}. \label{eq:lr3}  
 \end{eqnarray}
 Conversely, if a non-associative algebra $A$ satisfies any of the Conditions \eqref{eq:lrm1}, \eqref{eq:lr2}, or \eqref{eq:lr3}, then $A$ is a nearly associative algebra.
\end{rmk}

\begin{pro}[{\cite[Proposition 4.1]{BBR2024}}]\label{pro: consequnces def NA}
    Let $\calA$ be a nearly associative algebra. Then  we have
\begin{eqnarray}
&& \asso(x,y,z)=2[y,z x],  \label{eq:condit1}
\end{eqnarray}
for all $x,y,z \in \calA$.
\end{pro}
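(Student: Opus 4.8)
The plan is to obtain the identity by two direct applications of the defining relation \eqref{eq: nearly associative algebra}, together with the definition \eqref{eq:A-A+} of the bracket $[-,-]$.

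First I would start from $\asso(x,y,z)=(xy)z-x(yz)$ and rewrite the first summand: the defining identity $(xy)z=y(zx)$ gives immediately $(xy)z=y(zx)$, so $\asso(x,y,z)=y(zx)-x(yz)$. Next I would rewrite the second summand $x(yz)$: applying \eqref{eq: nearly associative algebra} with the substitution $x\mapsto z$, $y\mapsto x$, $z\mapsto y$ yields $(zx)y=x(yz)$, hence $x(yz)=(zx)y$. Substituting this back gives
\[
\asso(x,y,z)=y(zx)-(zx)y.
\]
Finally, by the definition $[u,v]=\tfrac12(uv-vu)$ in \eqref{eq:A-A+}, the right-hand side is exactly $2[y,zx]$, which is \eqref{eq:condit1}.

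Alternatively, one could phrase the same computation operator-theoretically using Remark~\ref{rmk: left and righ multiplication}: the relation \eqref{eq:lr2}, namely $\rmL_x\rmR_y=\rmL_{yx}$, applied to $z$ reads $x(zy)=(yx)z$, which after renaming is the step $x(yz)=(zx)y$ used above; but the variable-substitution argument is the cleanest.

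I do not expect any real obstacle here: the statement is a two-line consequence of the axiom. The only point requiring a moment's attention is choosing the correct cyclic relabeling of the variables in the second application of \eqref{eq: nearly associative algebra} so that $x(yz)$ is turned into $(zx)y$ rather than into something else; once that substitution is identified, the result follows by inspection.
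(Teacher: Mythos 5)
Your proof is correct: the paper only cites this identity from \cite[Proposition 4.1]{BBR2024} without reproducing a proof, and your two applications of $(xy)z=y(zx)$ — once directly and once with the relabeling $x\mapsto z$, $y\mapsto x$, $z\mapsto y$ to get $x(yz)=(zx)y$ — combined with $2[y,zx]=y(zx)-(zx)y$ give exactly the standard derivation.
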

The following gives a characterization of flexible nearly associative algebras.
\begin{pro}[{\cite[Proposition 5.4]{BBR2024}}]\label{pro: A flexible and center condition}
    Let $\calA$ be a nearly associative algebra. Then $\calA$ is  flexible if and only if $\calA \bullet \calA \subseteq Z(\calA^-)$.
\end{pro}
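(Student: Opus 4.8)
The plan is to reduce both sides of the asserted equivalence to one and the same bracket identity, using the associator formula of Proposition~\ref{pro: consequnces def NA}. First I would recall that, by definition, $\calA$ is flexible precisely when $\asso(x,y,z)+\asso(z,y,x)=0$ for all $x,y,z\in\calA$. Specializing \eqref{eq:condit1} gives $\asso(x,y,z)=2[y,zx]$ and, interchanging the roles of $x$ and $z$, $\asso(z,y,x)=2[y,xz]$. Adding these, using bilinearity of $[-,-]$ and the identity $zx+xz=2\,(x\bullet z)$ coming from \eqref{eq:A-A+}, one gets
\begin{equation*}
\asso(x,y,z)+\asso(z,y,x)=2[y,\,zx+xz]=4[y,\,x\bullet z].
\end{equation*}

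Consequently, $\calA$ is flexible if and only if $[y,\,x\bullet z]=0$ for all $x,y,z\in\calA$. By antisymmetry of the bracket this is the same as $[x\bullet z,\,y]=0$ for all $x,y,z\in\calA$, i.e. $x\bullet z\in Z(\calA^-)$ for every $x,z\in\calA$. Since $Z(\calA^-)$ is a linear subspace of $\calA$ and $\calA\bullet\calA$ is, by definition, the subspace spanned by the products $x\bullet z$, this last condition is equivalent to $\calA\bullet\calA\subseteq Z(\calA^-)$, which is exactly the claim.

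I do not expect a genuine obstacle here: once Proposition~\ref{pro: consequnces def NA} is in hand the argument is essentially a two-line computation. The only points demanding a little care are (i) correctly tracking which variable plays which role when specializing \eqref{eq:condit1} to obtain the expression for $\asso(z,y,x)$, and (ii) the minor bookkeeping observation that the passage from the statement that every generator $x\bullet z$ lies in $Z(\calA^-)$ to $\calA\bullet\calA\subseteq Z(\calA^-)$ relies on $Z(\calA^-)$ being a subspace of $\calA$.
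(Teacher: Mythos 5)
Your argument is correct: it follows the natural route of linearizing flexibility, applying identity \eqref{eq:condit1} to both $\asso(x,y,z)$ and $\asso(z,y,x)$, and recognizing $zx+xz=2(x\bullet z)$, which is exactly the computation behind the cited result (the paper itself quotes Proposition~\ref{pro: A flexible and center condition} from \cite{BBR2024} without reproducing a proof). No gaps; the two points of care you flag are handled correctly.
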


Recall that an algebra $  (\mathfrak{g}, [-,-])$ is called a \emph{Lie algebra} if it satisfies the following two conditions: 
\begin{align*}
 [x,y]= -[y,x] , & \\  
[x,[y,z]]+[y,[z,x]]   +[z,[x,y]]=0, &\; \; \; \mbox{ ({\it  Jacobi identity}) }  
\end{align*}
for all $x,y,z\in \mathfrak{g} $. As usual, the \textit{center} of a Lie algebra $  (\mathfrak{g}, [-,-])$ is defined by 
 $$
Z(\mathfrak{g}):=\{x\in \mathfrak{g}\,|\,  [x,y]=0 \mbox{ for all } y\in \mathfrak{g}\}.
$$

An algebra $(A ,\bullet)$ is a \emph{Jordan algebra} if the following two conditions hold: 
\begin{align*}
  x \bullet y =  y \bullet x   ,& \\   
 (x \bullet y) \bullet x^2 =x \bullet (  y \bullet x^2 ),&  \; \; \; \mbox{ ({\it Jordan identity}) }  \label{jor id1}
\end{align*}
\noindent for all $x,y\in {A} $.

\begin{defi} A  non-associative algebra $A$ is called: 
 \begin{enumerate}
 \item \textit{Lie admissible} if  ${A}^- = ({A}, [-,-])$ is a Lie  algebra.
 \item \textit{Jordan admissible} if  ${A}^+ =({A},  \bullet  )$ is a Jordan  algebra.
\item {\it Lie-Poisson-Jordan admissible} if it is Lie and Jordan admissible, and also for any $x,y,z\in A$
$$[x,y \bullet z]= [x,y] \bullet z + y \bullet [x,y].$$
\end{enumerate}
\end{defi}

\noindent For a Lie admissible algebra $A$, the Lie algebra ${A}^- = ({A}, [-,-])$ is called an \textit{underlying Lie algebra}  of $A$. 
For a Jordan admissible algebra $A$, the Jordan algebra ${A}^+ =({A},  \bullet  )$ is called an \textit{underlying Jordan algebra}  of $A$.

Recall that a derived series of an algebra $ A$ is the series $ A \supseteq A^{(1)} \supseteq A^{(2)}\supseteq \cdots$, where $A^{(1)}:=A^2$ and $A^{(i+1)}:=(A^{(i)})^2$, for $i\geq 1$. Then $ A$ is called \textit{solvable} if for some $r\geq 1$ we have $A^{(r)}=\{0\}$. 

\begin{pro}[{\cite[Propositions 4.2 and 4.5]{BBR2024}}] \label{pro: A^+ Jordan A^- Lie}
Let $\calA$ be a nearly associative algebra. Then ${\calA}^- $ is a solvable Lie algebra and $\A^+$ is a Jordan algebra.
\end{pro}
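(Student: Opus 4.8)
The proposition packages three assertions: that $[-,-]$ makes $\A$ a Lie algebra, that this Lie algebra is solvable, and that $\bullet$ makes $\A$ a Jordan algebra. My plan is to settle the Lie and Jordan statements by direct computation with the equivalent forms \eqref{eq: nearly associative algebra}, \eqref{eq:lrm1}--\eqref{eq:lr3} of the defining identity, and to treat solvability by induction on $\dim\A$.

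\emph{Lie and Jordan parts.} Antisymmetry of $[-,-]$ is built into \eqref{eq:A-A+}. For the Jacobi identity one has $4[[x,y],z]=(xy)z-(yx)z-z(xy)+z(yx)$; forming the cyclic sum over $(x,y,z)$ and rewriting each left-bracketed triple $(ab)c$ as $b(ca)$ by \eqref{eq: nearly associative algebra} gives $\sum_{\mathrm{cyc}}(xy)z=y(zx)+z(xy)+x(yz)=\sum_{\mathrm{cyc}}z(xy)$ and, likewise, $\sum_{\mathrm{cyc}}(yx)z=\sum_{\mathrm{cyc}}z(yx)$, so the cyclic sum vanishes and $\A^-$ is a Lie algebra. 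For the Jordan identity: $\bullet$ is commutative and $x\bullet x=x^2$ (the square in $\A$), so the identity amounts to $[\rmL^+_x,\rmL^+_{x^2}]=0$ with $\rmL^+_a:=\tfrac12(\rmL_a+\rmR_a)$. Expanding $4[\rmL^+_x,\rmL^+_{x^2}]$ into four commutators, \eqref{eq:lrm1} gives $[\rmL_x,\rmL_{x^2}]=-[\rmR_x,\rmR_{x^2}]$, while \eqref{eq:lr2}--\eqref{eq:lr3} give $[\rmL_x,\rmR_{x^2}]+[\rmR_x,\rmL_{x^2}]=(\rmL_{x^2x}-\rmL_{xx^2})-(\rmR_{x^2x}-\rmR_{xx^2})$, which is $0$ because $x^2x=x\,x^2$ is the case $x=y=z$ of \eqref{eq: nearly associative algebra}; hence $\A^+$ is a Jordan algebra. (The same sliding trick, using \eqref{eq:lr2} in the form $x^i x^j=x^{i+1}x^{j-1}$, shows by a routine induction that $\A$ is power-associative.)

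\emph{Solvability.} I induct on $\dim\A$. If $\A^2\subsetneq\A$, then $\A^2$ is a nearly associative ideal of smaller dimension, so $(\A^2)^-$ is solvable by the inductive hypothesis; since $[\A,\A]\subseteq\A^2$ is a Lie subalgebra of $(\A^2)^-$ it is solvable, and as $\A^-/[\A,\A]$ is abelian, $\A^-$ is solvable. If $\A^2=\A$, then $\A$ is not nilpotent, hence — being power-associative — not nil, so it contains a nonzero idempotent $e$. Specialising \eqref{eq:lrm1}--\eqref{eq:lr3} at $x=y=e$ yields $\rmL_e\rmR_e=\rmL_e$, $\rmR_e\rmL_e=\rmR_e$ and $\rmL_e^2=\rmR_e^2$, which force $\rmL_e=\rmR_e=:T$ and $T^2=T$; thus $\A=\A_1\oplus\A_0$ with $\A_1=T(\A)$, $\A_0=\ker T$. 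Using \eqref{eq: nearly associative algebra} in the form $w(uv)=(vw)u$ one checks the Peirce relations $\A_1\A_1\subseteq\A_1$ with $\A_1$ commutative, $\A_1\A_0=\{0\}$, $\A_0\A_1\subseteq\A_0$, $\A_0\A_0\subseteq\A_0$; hence $\A_0$ is an ideal of $\A$ with $\A/\A_0\cong\A_1$ commutative. If $\A_0=\{0\}$ then $e$ is a unit and $\A$ is commutative; otherwise $\A_0$ is a proper nearly associative ideal, so $(\A_0)^-$ is solvable by the inductive hypothesis, and since $\A^-/(\A_0)^-\cong(\A_1)^-$ is abelian, $\A^-$ is solvable. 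The real work is the $\A^2=\A$ case — verifying the Peirce relations, and (if one prefers not to invoke power-associativity) proving it first; everything else is bookkeeping with the three forms of \eqref{eq: nearly associative algebra}. An alternative route to solvability runs through the identity $(\A\bullet\A)\cdot[\A,\A]=[\A,\A]\cdot(\A\bullet\A)=\{0\}$, obtained by evaluating $[[a,b],zw]$ once from the definition of $[-,-]$ and once via $\asso(w,[a,b],z)=2[[a,b],zw]$ (Proposition~\ref{pro: consequnces def NA}) and comparing, which forces $\A\bullet\A$ and the commutator ideal to be nilpotent.
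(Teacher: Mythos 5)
Your Lie-admissibility and Jordan-admissibility arguments are correct: the cyclic-sum computation for the Jacobi identity and the operator identity $4[\rmL^+_x,\rmL^+_{x^2}]=[\rmL_x,\rmL_{x^2}]+[\rmR_x,\rmR_{x^2}]+[\rmL_x,\rmR_{x^2}]+[\rmR_x,\rmL_{x^2}]=0$ both check out, and the sliding trick $x^ix^j=x^{i+1}x^{j-1}$ does give power-associativity. (The paper only cites \cite{BBR2024} for this proposition, so there is no in-text proof to compare against.)

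The solvability argument, however, has a genuine gap at the step ``$\A^2=\A$, then $\A$ is not nilpotent, hence --- being power-associative --- not nil.'' For finite-dimensional power-associative algebras, non-nilpotency does \emph{not} imply the existence of a non-nilpotent element: Suttles' five-dimensional commutative power-associative nilalgebra is solvable but not nilpotent, so the implication you invoke is false as a general principle, and the statement you actually need ($\A^2=\A$ and $\A\neq\{0\}$ forces $\A$ to be non-nil) is precisely the kind of assertion that does not follow from power-associativity alone and would have to be proved for nearly associative algebras specifically --- for instance by passing to the nil Jordan algebra $\A^+$ (nilpotent by Albert's theorem, whence $\A\bullet\A\neq\A$) and then controlling $[\A,\A]$, which you do not do. Without this, no idempotent is produced in the case $\A^2=\A$ (a case that does occur, e.g.\ $\A=\bK$), and the Peirce decomposition that follows has nothing to stand on. That decomposition is otherwise essentially right, up to a left/right swap: from $e(xy)=(ye)x$ and $(xy)e=y(ex)$ one gets $\A_0\A_1=\{0\}$ and $\A_1\A_0\subseteq\A_0$, which still makes $\A_0$ a two-sided ideal, so this slip is harmless. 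The ``alternative route'' does not repair the gap either: comparing $[[a,b],zw]$ computed from the definition of $[-,-]$ with $\tfrac12\asso(w,[a,b],z)$ is a tautology (both reductions use only the defining identity and return the same expression); what such manipulations actually yield is $\bigl[[\A,\A],\A\bullet\A\bigr]=\{0\}$, i.e.\ that the two subspaces commute, not that their algebra products vanish, and even the stronger claimed identity would not by itself give solvability of $\A^-$ without further argument.
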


The following proposition characterizes the nearly associative algebras.

\begin{pro}[{\cite[Proposition 4.7]{BBR2024}}] \label{very}
A non-associative algebra $\calA$ is a nearly associative algebra if and only if the following three conditions hold:
\begin{enumerate}
\item  ${\calA}^-$ is a Lie algebra;
\item For $x,y,z\in\calA$,   $ \asso^+(x,y,z) =[y,[z,x]] ;$  
\item For   $x,y,z\in\calA$, $ [x,y]\bullet z+ [ x\bullet y , z]+ [x,z]\bullet y  +[  x\bullet z ,y]= 0. $
\end{enumerate}
\end{pro}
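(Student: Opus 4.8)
The plan is to translate the single cubic identity $(xy)z=y(zx)$ into the coordinates $xy=[x,y]+x\bullet y$ of \eqref{eq:cdot}. Expanding both sides and regrouping — using only that $\bullet$ is commutative and $[-,-]$ is antisymmetric — one obtains, in \emph{any} non-associative algebra over $\mathbb K$ and for all $x,y,z$, the purely formal identity
\begin{multline*}
(xy)z-y(zx)=\asso^+(y,x,z)+\bigl([[x,y],z]-[y,[z,x]]\bigr)\\
{}+\bigl([x,y]\bullet z+[x\bullet y,z]+[x,z]\bullet y+[x\bullet z,y]\bigr).
\end{multline*}
Here the first summand comes from $(x\bullet y)\bullet z-y\bullet(z\bullet x)=(y\bullet x)\bullet z-y\bullet(x\bullet z)=\asso^+(y,x,z)$, the middle bracket is a ``Jacobiator piece'', and the last parenthesis is exactly the left-hand side of condition~(3). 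I would record this display as the opening step of the proof; it is the device that splits the one defining identity into its three homogeneous components.

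For the implication $(1)\wedge(2)\wedge(3)\Rightarrow\calA$ nearly associative: condition~(3) annihilates the last parenthesis; condition~(2), applied with $(x,y,z)\mapsto(y,x,z)$, gives $\asso^+(y,x,z)=[x,[z,y]]$, so the right-hand side of the display reduces to $[x,[z,y]]+[[x,y],z]-[y,[z,x]]$, which after using antisymmetry of $[-,-]$ is the cyclic sum $[[x,y],z]+[[y,z],x]+[[z,x],y]$; this vanishes by the Jacobi identity, available from~(1). Hence $(xy)z=y(zx)$.

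For the converse, assume $\calA$ is nearly associative. Condition~(1) is Proposition~\ref{pro: A^+ Jordan A^- Lie}. Condition~(3) is a two-line direct computation: its left-hand side equals $\tfrac12\bigl((xy)z-z(yx)+(xz)y-y(zx)\bigr)=\tfrac12\bigl(((xy)z-y(zx))+((xz)y-z(yx))\bigr)$, and both grouped differences vanish by \eqref{eq: nearly associative algebra} (the first is the defining identity, the second follows from it via $(xz)y=z(yx)$). Condition~(2) then drops out of the display: its left-hand side is now $0$, the mixed parenthesis is $0$ by~(3), and $\asso^+(y,x,z)+[[x,y],z]-[y,[z,x]]=0$ together with the Jacobi identity of $\calA^-$ forces $\asso^+(y,x,z)=[x,[z,y]]$, which is~(2) after relabeling. (Alternatively, (2) may be verified head-on by expanding $\asso^+(x,y,z)=(x\bullet y)\bullet z-x\bullet(y\bullet z)$ into eight products and repeatedly applying \eqref{eq: nearly associative algebra} in its substituted forms, e.g.\ $z(xy)=(yz)x$ and $x(yz)=(zx)y$, until the four surviving terms assemble into $[y,[z,x]]$.)

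The only delicate point is bookkeeping: getting every sign right in the displayed identity, and keeping straight which substitution instance of $(xy)z=y(zx)$ is being used in each short computation. I expect no conceptual obstacle — all the content lies in the coordinate change $xy=[x,y]+x\bullet y$, which converts the defining identity into its three homogeneous parts.
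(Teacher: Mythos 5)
Your proof is correct: the displayed decomposition of $(xy)z-y(zx)$ into its $\bullet\bullet$, $[\,]\,[\,]$, and mixed homogeneous parts checks out sign by sign, and both implications follow cleanly from it (the forward direction reducing to the Jacobiator, the converse recovering (2) and (3) by the substitution instances of \eqref{eq: nearly associative algebra} you indicate). The paper itself only cites this statement from \cite[Proposition 4.7]{BBR2024} without reproducing a proof, so there is nothing in the text to compare against; your self-contained argument via the coordinate change $xy=[x,y]+x\bullet y$ is the natural one and is complete as written.
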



\smallskip

Now we introduce other classes of non-associative algebras that are crucial to study nearly associative bialgebras.

\begin{defi}\label{def LR-algebra}
    A non-associative algebra $A$ is called:
\begin{itemize}
 \item \textit{$R$-algebra} if $\rmR_x   \rmR_y=\rmR_y   \rmR_x$ for all  $x,y \in A$, i.e., 
 for all $x,y,z\in A$ 
   \begin{equation}
   (x y)z = (xz)y. \label{condition: R-alg}
   \end{equation}
\item \textit{$L$-algebra} if  $\rmL_x   \rmL_y=\rmL_y   \rmL_x$ for all  $x,y \in A$, i.e., 
for all $x,y,z\in A$
    \begin{equation}
    x(yz)=y(xz). \label{condition: L-alg}
    \end{equation}
 \item \textit{$LR$-algebra} if $A$ is $L$-algebra and $R$-algebra simultaneously, that is,  
 Conditions \eqref{condition: R-alg} and \eqref{condition: L-alg} are both satisfied.
\end{itemize}
\end{defi}

\begin{rmk}
    Any associative $LR$-algebra is a nearly associative algebra.
\end{rmk}

 Given an algebra $(A,\cdot),$ the \emph{opposite algebra} $A^{\op}=(A, \cdot^{\op})$ is the underlying vector space of $A$ endowed with the \emph{opposite product} $x\cdot^{\op} y:=y\cdot x$ for $x,y\in A.$
 
\begin{rmk}\label{rmk: opposite algebra}
   The opposite algebra of a nearly associative algebra is still a nearly associative algebra. Moreover, the opposite algebra of an $L$-algebra is an $R$-algebra, and vice versa. 
\end{rmk}

Now, we present an example of a six-dimensional nearly associative algebra that is not an $LR$-algebra.

\begin{exa}\label{exa:NA not LR}
    Let $\calA$ be a vector space of dimension six with basis $\{e_1,e_2,e_3,e_4,e_5,e_6 \} $. 
     If we equipped $\calA$ with the linear product defined by
    \begin{align*}
          & \qquad  e_1 e_2=e_3, \ \ e_2 e_1 =-e_4, \ \ e_2 e_2 =e _5, \ \ e_1 e_5 = -e_6,\ \ e_5 e_1 =   e_2 e_3 = e_4 e_2 = e_6, \ \ e_2 e_4 = -2 e_6, \ \ e_3 e_2 = 2 e_6, 
    \end{align*}
     and all other products equal to $0$, then  $\calA$ is a nearly associative algebra (cf. \cite[Example 3.17]{BBR2024}). Since $(e_2 e_1) e_2 =-e_6 \neq e_6= (e_2 e_2) e_1$ and $e_2 (e_1 e_2) = e_6 \neq - e_6 =e_1 ( e_2 e_2)$, $\calA$ is neither an $L$-algebra nor an $R$-algebra, and as a consequence it is not an $LR$-algebra.
\end{exa}

Next, we present an example of a three-dimensional $LR$-algebra that is not a nearly associative algebra.

\begin{exa}\label{exa:LR not NA}  
    Let $\calA$ be a vector space of dimension three with basis $\{e_1,e_2,e_3\} $. 
     If we equipped $\calA$ with the linear product defined by
    \begin{align*}
          & \qquad  e_2 e_1=-e_3,  \ \ e_2 e_2 =e _2, \ \ e_2 e_3 = e_3 e_2= e_3,
    \end{align*}
     and all other products equal to $0$, then by Proposition 3.1 of \cite{Burde} $\calA$ is an $LR$-algebra. Since $(e_2 e_1) e_2 = - e_3 \neq 0= e_1 (e_2 e_2)$, $\A$ is not a nearly associative algebra.
\end{exa}

\begin{lem}\label{lem:R-L} Consider a nearly associative algebra $\A$.
Then  $\A$ is an $R$-algebra if and only if it is an $L$-algebra.
\end{lem}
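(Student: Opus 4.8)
The plan is to exploit Remark~\ref{rmk: left and righ multiplication}, which gives us the three operator identities \eqref{eq:lrm1}, \eqref{eq:lr2}, \eqref{eq:lr3} available in any nearly associative algebra, and translate the $R$- and $L$-conditions into statements purely about multiplication operators. Recall that $\A$ is an $R$-algebra iff $\rmR_x\rmR_y = \rmR_y\rmR_x$ for all $x,y$, and an $L$-algebra iff $\rmL_x\rmL_y = \rmL_y\rmL_x$ for all $x,y$. The key observation is that \eqref{eq:lrm1} says $\rmL_x\rmL_y = \rmR_y\rmR_x$; applying this identity twice (once with the pair $(x,y)$ and once with $(y,x)$) should directly convert commutativity of left multiplications into commutativity of right multiplications and conversely.

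First I would assume $\A$ is an $L$-algebra, so $\rmL_x\rmL_y = \rmL_y\rmL_x$ for all $x,y\in\A$. By \eqref{eq:lrm1} applied to the pair $(x,y)$ we have $\rmL_x\rmL_y = \rmR_y\rmR_x$, and applied to the pair $(y,x)$ we have $\rmL_y\rmL_x = \rmR_x\rmR_y$. Combining these with the $L$-algebra hypothesis yields $\rmR_y\rmR_x = \rmL_x\rmL_y = \rmL_y\rmL_x = \rmR_x\rmR_y$, i.e.\ $\rmR_x\rmR_y = \rmR_y\rmR_x$ for all $x,y$, which is exactly the $R$-algebra condition. The converse is completely symmetric: assuming $\rmR_x\rmR_y = \rmR_y\rmR_x$, the same two instances of \eqref{eq:lrm1} give $\rmL_x\rmL_y = \rmR_y\rmR_x = \rmR_x\rmR_y = \rmL_y\rmL_x$. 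So the equivalence follows with essentially no computation beyond rewriting \eqref{eq:lrm1} twice.

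I do not anticipate a genuine obstacle here; the only thing to be careful about is making sure the two substitutions of \eqref{eq:lrm1} use the variables in the correct order (the identity is not symmetric in $x$ and $y$, since the right-hand side reverses the order), but that is precisely what makes the argument work. One could alternatively phrase the whole proof at the level of the defining identities \eqref{condition: R-alg} and \eqref{condition: L-alg} using the associator formula \eqref{eq:condit1} from Proposition~\ref{pro: consequnces def NA}, but the operator approach via \eqref{eq:lrm1} is cleaner and avoids unpacking associators, so that is the route I would take.
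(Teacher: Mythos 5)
Your proof is correct and is essentially the paper's argument in operator form: the identity $\rmL_x\rmL_y=\rmR_y\rmR_x$ from \eqref{eq:lrm1} is exactly the element-wise chain $x(yz)=(zx)y=(zy)x=y(xz)$ that the paper writes out. The only cosmetic difference is that you handle the converse by the same symmetric computation, whereas the paper invokes the opposite-algebra observation of Remark~\ref{rmk: opposite algebra}; both are fine.
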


\begin{proof}
Let us suppose that $\A$ is an $R$-algebra. Since $\A$ is a nearly associative algebra, we obtain, for all $x,y,z \in \A$ 
\begin{equation*}
\begin{split}
 x(yz)=  (zx)y=(zy)x=y(xz).
\end{split}
 \end{equation*}
Then $\A$ is an $L$-algebra. Thus, we have shown that if $\A$ is an $R$-algebra, then it is also an $L$-algebra. As a consequence, by the opposite algebra property in Remark \ref{rmk: opposite algebra}, the converse holds as well.
\end{proof}


\smallskip

\begin{defi}
Let $A$ be a non-associative algebra. A bilinear form
$B:  A \times  A \longrightarrow  \mathbb{K}$ on $ A$ is called:
\begin{enumerate}
\item  
\textit{symmetric}   if $\displaystyle
  B(x,y)= B(y,x)$, for all $ 
x,y \in A $;
 \item     \textit{non-degenerate}   if $\displaystyle x \in A$ satisfies $\displaystyle
  B(x,y)=0$, for all $ 
 y \in A $, then $\displaystyle x=0$;
 \item     \textit{invariant} (or {\it associative})  if $\displaystyle
  B(x y,z)=B(x,y z)$,  whenever $ 
x   , y ,z \in A $.
\end{enumerate}
\end{defi}
 
\begin{defi}
A non-associative algebra $  A$ is said \textit{quadratic} if there
  exists a bilinear form $B$ on $ A$ such that
  $B$ is symmetric, non-degenerate, and invariant. It is denoted by $\displaystyle ( A,B)$, and $B$ is called an \textit{invariant scalar product} on $ A$.
\end{defi}

\noindent The following theorem proved by the authors in \cite{BBR2024} characterizes quadratic nearly associative algebras.
\begin{teo}\label{Teo characterization quad NA algebras}
    Let $(A, B_{A})$ be a quadratic algebra. Then $A$ is a nearly associative algebra if and only if $A$ is an $LR$-algebra.
\end{teo}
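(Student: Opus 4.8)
The plan is to use the non-degeneracy of $B$ to turn each implication into an identity between scalars of the form $B(\,\cdot\,,w)$, and then to juggle those scalars using only the symmetry and invariance of $B$ together with the defining identity on whichever side is being assumed. The basic tool is that invariance yields the cyclic rule $B(xy,z)=B(yz,x)=B(zx,y)$; equivalently, with respect to $B$ one has $\rmL_x^{\,*}=\rmR_x$ and $\rmR_x^{\,*}=\rmL_x$, which is the slogan behind all the rewritings below.

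For the implication ``$\calA$ nearly associative $\Rightarrow$ $\calA$ is $LR$'', by Lemma~\ref{lem:R-L} it suffices to prove that $\calA$ is an $R$-algebra, i.e. $(xy)z=(xz)y$ for all $x,y,z\in\calA$; by non-degeneracy this amounts to $B((xy)z,w)=B((xz)y,w)$ for every $w$. I would start from $B((xy)z,w)$, use \eqref{eq: nearly associative algebra} to replace $(xy)z$ by $y(zx)$, then apply invariance and symmetry to redistribute the parentheses, invoke \eqref{eq: nearly associative algebra} once more in the middle (in the form $x(wy)=(yx)w$) so as to reach $B(z(yx),w)$, and finally convert $z(yx)$ into $(xz)y$ by a last application of \eqref{eq: nearly associative algebra}. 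This yields the $R$-identity and hence, via Lemma~\ref{lem:R-L}, the $LR$ property.

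For the converse ``$\calA$ is $LR$ $\Rightarrow$ $\calA$ nearly associative'', again by non-degeneracy it is enough to check $B((xy)z,w)=B(y(zx),w)$ for all $w$. I would run an analogous chain: transport $B((xy)z,w)$ into $B(z,w(xy))$ by invariance and symmetry, apply left-commutativity \eqref{condition: L-alg} in the form $w(xy)=x(wy)$, and continue with invariance and symmetry down to $B(y(zx),w)$. Note this direction only uses the $L$-algebra part of the hypothesis; the $R$-algebra part is then automatic (by Lemma~\ref{lem:R-L}, or directly from $\rmL_x^{\,*}=\rmR_x$).

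The main obstacle is purely combinatorial: each chain needs the correct sequence of invariance/symmetry moves so that a single use of the relevant defining identity closes the loop, and there are many dead ends — several of them genuinely circular, merely reproducing the hypothesis — so one must keep careful track of which of the three cyclic positions each factor occupies. A more conceptual fallback that sidesteps some of this bookkeeping is to work with the multiplication operators: taking $B$-adjoints of \eqref{eq:lrm1}--\eqref{eq:lr3} (using $\rmL_x^{\,*}=\rmR_x$) gives $\rmL_y\rmR_x=\rmR_{yx}=\rmR_y\rmL_x$, i.e. $y(zx)=(xz)y$, which combined with \eqref{eq: nearly associative algebra} immediately produces the $R$-identity; I would present the element-wise chains but would resort to this operator computation if the rewriting turns unwieldy.
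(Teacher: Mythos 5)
Your proposal is correct. The paper does not actually reprove this theorem — it is quoted from \cite{BBR2024} — but the technique you use (transferring an identity through $B_A$ via invariance, symmetry and non-degeneracy, equivalently the adjoint relations $\rmL_x^{\,*}=\rmR_x$, $\rmR_x^{\,*}=\rmL_x$) is exactly the one the paper itself deploys in the proof of Proposition~\ref{pro:LRN}, so this is the intended argument. Both of your chains do close: for the forward direction one has
$B_A((xy)z,w)=B_A(y(zx),w)=B_A(y,(zx)w)=B_A(zx,wy)=B_A(z,x(wy))=B_A(z,(yx)w)=B_A(z(yx),w)=B_A((xz)y,w)$,
giving the $R$-identity and hence, by Lemma~\ref{lem:R-L}, the $LR$ property; for the converse,
$B_A((xy)z,w)=B_A(z,w(xy))=B_A(z,x(wy))=B_A(zx,wy)=B_A(w,y(zx))=B_A(y(zx),w)$
uses only left-commutativity, consistently with Proposition~\ref{pro:LRN}. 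Your operator-level fallback is also sound: the $B_A$-adjoints of \eqref{eq:lr2}--\eqref{eq:lr3} give $\rmL_{xy}=\rmR_{yx}$, i.e.\ $(xy)z=z(yx)$, which together with \eqref{eq: nearly associative algebra} yields $(xy)z=(xz)y$ directly.
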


 \begin{pro}\label{pro:LRN} Let $(A, B_{A})$ be a quadratic algebra. Then the  three conditions are equivalent:
 \begin{enumerate}
     \item $A$ is an $R$-algebra;

     \item $A$ is an $L$-algebra;

     \item $A$ is a nearly associative algebra.
 \end{enumerate}
\end{pro}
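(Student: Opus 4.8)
The plan is to reduce Proposition~\ref{pro:LRN} to results already established in the excerpt, treating the quadratic hypothesis as the engine that makes all three notions collapse. First I would recall that by Theorem~\ref{Teo characterization quad NA algebras}, for a quadratic algebra $(A,B_A)$ being nearly associative is equivalent to being an $LR$-algebra, i.e.\ equivalent to being simultaneously an $R$-algebra and an $L$-algebra. So (3) already implies both (1) and (2). It remains to prove the two reverse implications (1)$\Rightarrow$(3) and (2)$\Rightarrow$(3), and by the opposite-algebra symmetry of Remark~\ref{rmk: opposite algebra} (the opposite of an $L$-algebra is an $R$-algebra, the opposite of a quadratic algebra with form $B_A$ is again quadratic with the same $B_A$, and near-associativity is preserved by taking opposites) it suffices to prove just one of them, say (1)$\Rightarrow$(3).

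So suppose $(A,B_A)$ is a quadratic $R$-algebra; I want to show $A$ is nearly associative. The key step is to use invariance of $B_A$ to transfer the $R$-algebra identity $(xy)z=(xz)y$ into the would-be near-associativity identity $(xy)z=y(zx)$. Concretely, for all $x,y,z,w\in A$ one computes, using invariance repeatedly,
\begin{align*}
B_A\big((xy)z,w\big) &= B_A\big(xy,zw\big) = B_A\big(x,y(zw)\big).
\end{align*}
On the other side, $B_A\big(y(zx),w\big)=B_A\big(y,(zx)w\big)$, and here I would push the $R$-algebra identity $(zx)w=(zw)x$ to get $B_A\big(y,(zw)x\big)=B_A\big(yw',\dots\big)$ — more carefully, rewrite $B_A\big(y,(zw)x\big)$ by invariance moving factors around the form until it matches $B_A\big(x,y(zw)\big)$. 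Since $B_A$ is symmetric, $B_A\big(x,y(zw)\big)=B_A\big(y(zw),x\big)=B_A\big(y,(zw)x\big)$, and the latter equals $B_A\big(y,(zx)w\big)=B_A\big(y(zx),w\big)$ by the $R$-identity and invariance. Chaining these equalities yields $B_A\big((xy)z-y(zx),w\big)=0$ for all $w$, and non-degeneracy of $B_A$ forces $(xy)z=y(zx)$, so $A$ is nearly associative.

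With (1)$\Rightarrow$(3) in hand, applying it to the opposite algebra $A^{\op}$ — which is quadratic for the same $B_A$ and is an $R$-algebra precisely when $A$ is an $L$-algebra, and which is nearly associative precisely when $A$ is — gives (2)$\Rightarrow$(3). Together with the implications (3)$\Rightarrow$(1) and (3)$\Rightarrow$(2) coming from Theorem~\ref{Teo characterization quad NA algebras}, all three statements are equivalent. (Alternatively, once (3)$\Leftrightarrow$(1) is known, Lemma~\ref{lem:R-L} gives (1)$\Leftrightarrow$(2) directly under the near-associativity that (1) now entails, closing the loop even faster.) The main obstacle I anticipate is purely bookkeeping: getting the chain of invariance/symmetry manipulations to land exactly on the target identity without sign or ordering slips, since $A$ is genuinely non-associative and one must be careful that every rewriting step is justified by an identity actually available (invariance, symmetry, or the $R$-identity), and nothing more.
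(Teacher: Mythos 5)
Your proof is correct, and it follows a route that is close in spirit to the paper's but differs in where the key computation is aimed. The paper first proves (1)$\Leftrightarrow$(2) by the invariance/non-degeneracy computation $0=B_A((xy)z-(xz)y,t)=B_A(x,y(zt)-z(yt))$, obtains (3)$\Rightarrow$(1),(2) from Theorem~\ref{Teo characterization quad NA algebras}, and then closes the loop by observing that (1) together with the already-proved (1)$\Rightarrow$(2) makes $A$ an $LR$-algebra, so the \emph{other} direction of Theorem~\ref{Teo characterization quad NA algebras} gives (3). You instead aim the invariance computation directly at the target identity $(xy)z=y(zx)$, and your chain $B_A((xy)z,w)=B_A(x,y(zw))=B_A(y,(zw)x)=B_A(y,(zx)w)=B_A(y(zx),w)$ is correct (each step is legitimately invariance, symmetry, or the $R$-identity applied to $(zw)x$). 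The net effect is that you only invoke the ``nearly associative $\Rightarrow$ $LR$'' direction of Theorem~\ref{Teo characterization quad NA algebras}, whereas the paper uses both directions; your argument is therefore marginally more self-contained, at the cost of a slightly longer chain of rewritings. Both proofs rely on the same opposite-algebra symmetry, and your implicit claim that $A^{\op}$ is quadratic for the same form $B_A$ is easily checked ($B_A(x\cdot^{\op}y,z)=B_A(yx,z)=B_A(y,xz)=B_A(xz,y)=B_A(x,zy)=B_A(x,y\cdot^{\op}z)$), matching the paper's own implicit use of this fact.
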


\begin{proof}
First, we show the equivalence between Condition (1) and  (2). We assume that $A$ is an $R$-algebra. Since $B_{A}$ is invariant, we get, for all $x,y,z,t\in A$
\begin{equation*}
\begin{split}
0=B_{A}((xy)z-(xz)y,t) =B_{A}( x,y  (zt))-B_{A}( x,z  (yt))
=B_{A}( x,y  (zt)-z  (yt)).
\end{split}
 \end{equation*}
Since $B_{A}$ is non-degenerate, it follows that $y  (zt)-z  (yt)=0$ for all $ y,z,t\in A$, that is, $A$ is an $L$-algebra. Thus, we have shown that Condition (1) implies (2).
 As a consequence, by the opposite algebra property in Remark \ref{rmk: opposite algebra}, we have also that Condition (2) implies (1).

Now, let us consider a nearly associative algebra $A$. Since $(A, B_{A})$ is a quadratic algebra, by Theorem \ref{Teo characterization quad NA algebras} we infer that  $A$ is an $LR$-algebra, so it is an $R$-algebra as well as an $L$-algebra. Thus, it is proven that  Condition (3) implies (1) and (2).

Now, it is enough to show that Condition (1) implies (3). Suppose that $A$ is an $R$-algebra. We proved already that Condition (1) implies (2), so  $A$ is also an $L$-algebra, that is, $A$ is an $LR$-algebra.
By Theorem \ref{Teo characterization quad NA algebras} we conclude that  $A$ is a nearly associative algebra, completing the proof.
\end{proof}

\begin{rmk}
 The characterization of nearly associative algebras presented in Proposition \ref{pro:LRN} in terms of $R$-algebras and $L$-algebras in the context of quadratic algebras allows us to save a significant effort in analyzing the structure of the bialgebras of the nearly associative algebras.
\end{rmk}

\section{Representation of nearly associative algebras}\label{Section: Representation of NA algebras}

In this section, we introduce the concept of representation of a nearly associative algebra. We define the adjoint and coadjoint representations. We establish the conditions for the existence of the coadjoint representation, and provide a general result that applies to any non-associative algebra.

\begin{defi} \label{def: representation}
Let $(A, \cdot )$ be a non-associative algebra satisfying conditions $(P_1), \ldots , (P_n)$, $V$ be a vector space,
 and $\displaystyle  r,l:{A} \longrightarrow \mbox{End }(V)$ be a pair of linear maps.
On the vector space sum $A\oplus  V$ we consider the following
product:
\begin{equation}
\label{eq: product of A+V}
\begin{split}
(x+u)\ast (y+v):=  x \cdot y+  l(x)(v)+r(y)(u),
\end{split}
 \end{equation}
 for $   x,y \in A $ and $ u,v \in V$. We call $(V,r,l)$ an \emph{$A$-bimodule}  if $(A\oplus  V, \ast)$ is an algebra  that also satisfies the conditions $(P_1), \ldots , (P_n).$ In this case,  $(r,l)$ is  called a
\emph{representation}  of $ A$ in $V$ relative to the
properties $(P_1), \ldots , (P_n).$
\end{defi}

 Accordingly to the above definition, we define a representation of a nearly associative algebra as follows. 

 \begin{defi}
     Let $\calA$ be a nearly associative algebra, $V$ be a vector space, and $\displaystyle  r,l:{A} \longrightarrow \mbox{End }(V)$ be a pair of linear maps. Then  $\displaystyle (r,l) $ is a
 representation of $\calA$ in $V$ if $(A\oplus V, \ast)$, where $\ast$ is the product defined by \eqref{eq: product of A+V}, is a nearly associative algebra.
 \end{defi}

\begin{pro} \label{pro:representation}
Let $\calA$ be  a nearly associative algebra, $V$ be a vector space,
 and $\displaystyle l,r:{\calA} \longrightarrow \mbox{End }(V)$ be linear maps. 
The pair  $\displaystyle (r,l) $ is a
 representation  of $\calA$ in $V$  
if and only if, for all $x,y \in {\calA}$, 
\begin{eqnarray} 
&&  l(x)l(y ) =r(y) r(x), \label{eq:repre1}\\
&&  l(x)r(y)=l(yx),  \label{eq:repre2}\\
&&  r(x) l(y)=r(xy) \label{eq:repre3}.
\end{eqnarray}
\end{pro}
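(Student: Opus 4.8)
The plan is to verify that the product $\ast$ on $A \oplus V$ defined by \eqref{eq: product of A+V} satisfies the nearly associative identity $(\xi\eta)\zeta = \eta(\zeta\xi)$ for all $\xi,\eta,\zeta \in A\oplus V$, and to show that this single vector-valued identity decomposes into exactly the three operator identities \eqref{eq:repre1}, \eqref{eq:repre2}, \eqref{eq:repre3}. Write $\xi = x+u$, $\eta = y+v$, $\zeta = z+w$ with $x,y,z\in\calA$ and $u,v,w\in V$. Since $\calA$ is already nearly associative, the $A$-component of $(\xi\eta)\zeta - \eta(\zeta\xi)$ vanishes automatically, so the whole identity lives in $V$. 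The strategy is to expand both sides using the definition of $\ast$ twice and collect the coefficients of $u$, $v$, and $w$ separately; because $u,v,w$ range over all of $V$ independently, the identity in $A\oplus V$ holds if and only if each of these three $\End(V)$-valued coefficient identities holds.

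Concretely, I would first compute $(\xi\eta)\zeta$. We have $\xi\eta = xy + l(x)v + r(y)u$, so
\begin{equation*}
(\xi\eta)\zeta = (xy)z + l(xy)w + r(z)\bigl(l(x)v + r(y)u\bigr) = (xy)z + l(xy)w + r(z)l(x)v + r(z)r(y)u.
\end{equation*}
Next I would compute $\eta(\zeta\xi)$. Here $\zeta\xi = zx + l(z)u + r(x)w$, so
\begin{equation*}
\eta(\zeta\xi) = y(zx) + l(y)\bigl(l(z)u + r(x)w\bigr) + r(zx)v = y(zx) + l(y)l(z)u + l(y)r(x)w + r(zx)v.
\end{equation*}
Subtracting and using $(xy)z = y(zx)$ in $\calA$, the $V$-part of $(\xi\eta)\zeta - \eta(\zeta\xi)$ equals
\begin{equation*}
\bigl(r(z)r(y) - l(y)l(z)\bigr)u + \bigl(r(z)l(x) - r(zx)\bigr)v + \bigl(l(xy) - l(y)r(x)\bigr)w.
\end{equation*}
Setting this to zero for all $u,v,w$ gives, after renaming variables, precisely \eqref{eq:repre1} (from the $u$-coefficient: $l(y)l(z) = r(z)r(y)$), \eqref{eq:repre3} (from the $v$-coefficient: $r(z)l(x) = r(zx)$), and \eqref{eq:repre2} (from the $w$-coefficient: $l(xy) = l(y)r(x)$, i.e. $l(x)r(y) = l(yx)$ after swapping $x\leftrightarrow y$).

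For the converse direction, one simply runs the same computation in reverse: assuming \eqref{eq:repre1}--\eqref{eq:repre3}, each of the three coefficient expressions above vanishes, hence $(\xi\eta)\zeta = \eta(\zeta\xi)$ for all $\xi,\eta,\zeta$, so $(A\oplus V,\ast)$ is nearly associative and $(r,l)$ is a representation by definition. There is no real obstacle here; the only point requiring a little care is the bookkeeping of which operator identity comes from which coefficient and the variable renaming needed to match the labels \eqref{eq:repre1}--\eqref{eq:repre3} exactly as stated, together with the observation (which is immediate from Remark~\ref{rmk: left and righ multiplication} applied to $\calA$ itself) that the three conditions are the natural bimodule analogues of \eqref{eq:lrm1}--\eqref{eq:lr3}. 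It is perhaps worth noting explicitly that one does not need to separately check that $(r,l)$ reproduces the left/right multiplications of $\calA$ inside $A\oplus V$, since that is built into the form of the product \eqref{eq: product of A+V}.
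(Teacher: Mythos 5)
Your proposal is correct and follows essentially the same route as the paper: the paper proves the forward direction by evaluating the nearly associative identity on pure elements such as $x\ast(y\ast v)$ and the converse by the same full expansion of $\bigl((x+u)\ast(y+v)\bigr)\ast(z+w)$ that you perform. Your single expand-and-collect-coefficients computation packages both directions at once, and the bookkeeping (which coefficient yields which of \eqref{eq:repre1}--\eqref{eq:repre3}, after the variable renaming you note) checks out.
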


\begin{proof}
First, suppose that the pair  $\displaystyle (r,l) $ is a
 representation of $\calA$ in $V$. Then $(A\oplus V, \ast)$, where $\ast$ is the product defined by \eqref{eq: product of A+V}, is a nearly associative algebra. Thus,
for all $   x,y  \in \calA $ and $  v  \in V,$
\begin{equation*}
\begin{split}
l(x)     l(y )  (v)  &= x  \ast ( l(y )  (v))= x  \ast ( y  \ast v)
  = ( v\ast  x)  \ast y = ( r(x)(v)  )  \ast y= r(y) r(x)(v) ,
\end{split}
\end{equation*} 
\begin{equation*}
\begin{split}
l(x)r(y)(v) &=x  \ast (r(y)(v))= x  \ast (v \ast y) = ( y\ast  x)\ast v 
= ( y   x)\ast v =  l(yx)  (v)
\end{split}
\end{equation*}
\begin{equation*}
\begin{split}
r(x) l(y)(v) &=  l(y)(v)   \ast x=     (y\ast v)  \ast x =
v  \ast ( x \ast y)
= v  \ast ( x   y)
    =   r(xy)(v).
\end{split}
\end{equation*}
Thus the Conditions \eqref{eq:repre1}-\eqref{eq:repre3} are satisfied.
 
 Conversely, assume that Conditions \eqref{eq:repre1}-\eqref{eq:repre3} are satisfied. Then, for all $x,y,z\in \A$ and $u,v,w\in V$, we have 
 \begin{align*}
     \Big((x+u)\ast (y+v)\Big)\ast (z+w)&= \Big( x y+  l(x)(v)+r(y)(u)\Big)\ast (z+w)= (xy)z+l(xy)(w)+r(z)l(x)(v)+r(z)r(y)(u)\\
     &= y(zx)+l(y)r(x)(w)+r(zx)(v)+l(y)l(z)(u)=(y+v)\ast \Big(zx+l(z)(u)+r(x)(w)\Big)\\
     &=(y+v)\ast \Big((z+w)\ast (x+u)\Big).
 \end{align*}
 Thus, $(\A\oplus V, \ast)$ is a nearly associative algebra and $(r,l)$ is a representation of $\A$ in $V$ as required. 
\end{proof}

\begin{pro}  
Let $\calA$ be a nearly associative algebra and consider the operators of left and right multiplications $ \rmL,\rmR : \A \longrightarrow \End(\A)$. Then $(\rmR,\rmL)$ is a representation of $\calA$ in $\calA$ called the {\it adjoint representation} (or {\it regular representation}) of $\calA$  .
\end{pro}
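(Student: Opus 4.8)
The plan is to verify that the pair $(\rmR,\rmL)$ satisfies the three identities \eqref{eq:repre1}--\eqref{eq:repre3} of Proposition~\ref{pro:representation}, with $V=\calA$, $r=\rmR$, and $l=\rmL$. Once this is checked, Proposition~\ref{pro:representation} immediately yields that $(\rmR,\rmL)$ is a representation of $\calA$ in $\calA$, and we simply attach the name \emph{adjoint representation}.

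The key observation is that the three required identities are precisely the three identities recorded in Remark~\ref{rmk: left and righ multiplication}, read off with $l=\rmL$ and $r=\rmR$. Indeed, \eqref{eq:repre1} with this substitution becomes $\rmL_x\rmL_y=\rmR_y\rmR_x$, which is \eqref{eq:lrm1}; \eqref{eq:repre2} becomes $\rmL_x\rmR_y=\rmL_{yx}$, which is \eqref{eq:lr2}; and \eqref{eq:repre3} becomes $\rmR_x\rmL_y=\rmR_{xy}$, which is \eqref{eq:lr3}. Since $\calA$ is a nearly associative algebra, Remark~\ref{rmk: left and righ multiplication} guarantees that all three hold. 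If one prefers a self-contained argument, each identity is a one-line consequence of the defining identity $(xy)z=y(zx)$: for instance $\rmL_x\rmL_y(z)=x(yz)$ and $\rmR_y\rmR_x(z)=(zx)y$, and $(zx)y=x(yz)$ is exactly \eqref{eq: nearly associative algebra} applied to the triple $(z,x,y)$; the other two are obtained analogously by reading the defining identity with the variables permuted appropriately.

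There is no real obstacle here: the statement is an essentially immediate specialization of Proposition~\ref{pro:representation} combined with Remark~\ref{rmk: left and righ multiplication}. The only point that warrants a word of care is bookkeeping of the variable order in the defining identity so that the three operator identities come out with the correct indices, but this is routine. I would therefore write the proof in two short sentences: apply Proposition~\ref{pro:representation}, and note that \eqref{eq:repre1}--\eqref{eq:repre3} for $(r,l)=(\rmR,\rmL)$ coincide with \eqref{eq:lrm1}--\eqref{eq:lr3} of Remark~\ref{rmk: left and righ multiplication}.
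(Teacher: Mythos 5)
Your proposal is correct and follows exactly the paper's own argument: the paper proves this proposition by citing Proposition~\ref{pro:representation} together with Remark~\ref{rmk: left and righ multiplication}, which is precisely your identification of \eqref{eq:repre1}--\eqref{eq:repre3} for $(r,l)=(\rmR,\rmL)$ with \eqref{eq:lrm1}--\eqref{eq:lr3}. The optional self-contained verification you sketch is also accurate.
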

\begin{proof}
The proof is a direct consequence of Proposition \ref{pro:representation} and Remark \ref{rmk: left and righ multiplication}.    
\end{proof}


\smallskip

Let $A$ be an algebra and $(V,r,l)$ be a $A$-bimodule. As usual, we consider $V^\ast$ the dual vector space of $V$, i.e., the vector space of all linear forms on $V$. Then the dual maps 
$\displaystyle l^\ast:A  \longrightarrow \End(V^\ast)$  and $\displaystyle r^\ast: A  \longrightarrow \End(V^\ast)$ of $l$ and $r$, respectively, are defined by (for  $ x \in A, f \in {V}^\ast$): \begin{equation}\label{eq: def dual maps}
\begin{split}
l^\ast(x)(f):=f     r(x) \, \mbox{ and } \,
r^\ast(x)(f):=f    l(x),
\end{split}
\end{equation}
respectively. Here, we also write the composition of a linear dual map of ${V}^\ast$ with an endomorphism on $V$ by juxtaposition.

\begin{rmk}
    Let $\calA$ be a nearly associative algebra, $V$ be a vector space and $(r,l)$ be a representation of $\A$ in $V$. If $l^\ast$ and $r^\ast$ are the dual map of $l$ and $r$, respectively, $(r^\ast, l^\ast)$ is not necessarily  a representation of $\A$ in $V^\ast$.
\end{rmk}

\begin{rmk}
In \cite{DS} the authors use a different definition for the dual maps in the context of nearly associative algebras. They define the dual maps
$\displaystyle l^\ast:A  \longrightarrow \End(V^\ast)$  and $\displaystyle r^\ast: A  \longrightarrow \End(V^\ast)$ of $l$ and $r$, respectively, by $l^\ast(x)(f):=f l(x) $ and $r^\ast(x)(f):=f r(x)$ for  $ x \in A, f \in {V}^\ast$, respectively.
However, in many classes of algebras, the pair  $\displaystyle (r^\ast,l^\ast) $ so defined is not a representation of $A$ in $V^\ast$. For example, given  $ ({\mathfrak g}, [-,-])$ a Lie algebra and the map
$\displaystyle \mbox{ad} :{\mathfrak g} \longrightarrow {\mathfrak gl}
({\mathfrak g})$  defined by
$\displaystyle \ad_x(y)  =  (\ad x)(y) : = 
[x,y],$
for   $x,y \in {\mathfrak g}$. If we consider $L:=\ad $ and $R:=-\ad $, then $(R,L)$ is a representation of $\mathfrak g$ in $\mathfrak g$. However, 
if  $\rmL^\ast(x)(f)=f     L(x)$  and $ 
\rmR^\ast(x)(f)=f    R(x) =-f    L(x)$ for all $ x \in {\mathfrak g}$  and $f \in {\mathfrak g}^\ast$, then $({\mathfrak g}^\ast,\rmR^\ast,\rmL^\ast)$ is not a $ {\mathfrak g} $-bimodule, except in case of $2$-nilpotent algebras.
\end{rmk}

\begin{lem}\label{lem:dual} Let $\A$ be a nearly associative algebra  and $(\rmR,\rmL)$ be its adjoint representation.
Then the pair $(\rmR^\ast,\rmL^\ast)$ is a
representation of $\A$ in $\A^\ast$ if and only if
\begin{equation}
\label{eq:coadjointcond}
\begin{split}
 \rmL_{xy} =\rmR_{y x},
\end{split}
 \end{equation}
 whenever $ x,y \in \A$. In these conditions, the representation
 $(\rmR^\ast,\rmL^\ast)$ is called the \textit{coadjoint representation} (or {\it co-regular representation}) of $\A$.
\end{lem}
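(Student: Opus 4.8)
The claim is that $(\rmR^\ast,\rmL^\ast)$ is a representation of $\A$ in $\A^\ast$ if and only if $\rmL_{xy}=\rmR_{yx}$ for all $x,y\in\A$. My strategy is to run the three conditions of Proposition~\ref{pro:representation} for the pair $(r,l)=(\rmR^\ast,\rmL^\ast)$ and translate each of them, via the definition \eqref{eq: def dual maps} of dual maps, back into an identity among the multiplication operators $\rmL,\rmR$ on $\A$ itself, using that $\A$ is nearly associative so that the relations \eqref{eq:lrm1}, \eqref{eq:lr2}, \eqref{eq:lr3} of Remark~\ref{rmk: left and righ multiplication} are available. First I would record what the dual maps are explicitly: from \eqref{eq: def dual maps}, $l^\ast(x)=\rmL^\ast(x)$ sends $f\mapsto f\circ r(x)=f\circ\rmR_x$, and $r^\ast(x)=\rmR^\ast(x)$ sends $f\mapsto f\circ l(x)=f\circ\rmL_x$. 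So as operators on $\A^\ast$, $\rmL^\ast(x)=\rmR_x^{\,t}$ (transpose/adjoint of right multiplication) and $\rmR^\ast(x)=\rmL_x^{\,t}$.

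\textbf{Translation of the three conditions.} Condition \eqref{eq:repre1} for $(\rmR^\ast,\rmL^\ast)$ reads $\rmL^\ast(x)\rmL^\ast(y)=\rmR^\ast(y)\rmR^\ast(x)$, i.e. $\rmR_x^{\,t}\rmR_y^{\,t}=\rmL_y^{\,t}\rmL_x^{\,t}$; taking transposes this is $\rmR_y\rmR_x=\rmL_x\rmL_y$, which is precisely \eqref{eq:lrm1} and hence automatically true in any nearly associative algebra. Condition \eqref{eq:repre2} reads $\rmL^\ast(x)\rmR^\ast(y)=\rmL^\ast(yx)$, i.e. $\rmR_x^{\,t}\rmL_y^{\,t}=\rmR_{yx}^{\,t}$; transposing, $\rmL_y\rmR_x=\rmR_{yx}$. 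Using \eqref{eq:lr2} with roles swapped, $\rmL_y\rmR_x=\rmL_{xy}$, so this condition is equivalent to $\rmL_{xy}=\rmR_{yx}$ for all $x,y$ — exactly \eqref{eq:coadjointcond}. Condition \eqref{eq:repre3} reads $\rmR^\ast(x)\rmL^\ast(y)=\rmR^\ast(xy)$, i.e. $\rmL_x^{\,t}\rmR_y^{\,t}=\rmL_{xy}^{\,t}$; transposing, $\rmR_y\rmL_x=\rmL_{xy}$. By \eqref{eq:lr3}, $\rmR_y\rmL_x=\rmR_{yx}$, so this condition is again equivalent to $\rmL_{xy}=\rmR_{yx}$. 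Thus \eqref{eq:repre1} is free, and \eqref{eq:repre2} and \eqref{eq:repre3} each collapse to the single identity \eqref{eq:coadjointcond}, which gives both implications at once.

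\textbf{Write-up.} I would write the proof as: ``By Proposition~\ref{pro:representation}, $(\rmR^\ast,\rmL^\ast)$ is a representation of $\A$ in $\A^\ast$ if and only if \eqref{eq:repre1}--\eqref{eq:repre3} hold for the pair $(r,l)=(\rmR^\ast,\rmL^\ast)$. Using \eqref{eq: def dual maps} we have $\rmL^\ast(x)(f)=f\rmR_x$ and $\rmR^\ast(x)(f)=f\rmL_x$ for $x\in\A$, $f\in\A^\ast$. Dualizing, \eqref{eq:repre1} becomes $\rmR_y\rmR_x=\rmL_x\rmL_y$, which holds by \eqref{eq:lrm1}; \eqref{eq:repre2} becomes $\rmL_y\rmR_x=\rmR_{yx}$, i.e. $\rmL_{xy}=\rmR_{yx}$ by \eqref{eq:lr2}; and \eqref{eq:repre3} becomes $\rmR_y\rmL_x=\rmL_{xy}$, i.e. $\rmR_{yx}=\rmL_{xy}$ by \eqref{eq:lr3}.'' Each dualization step is the elementary fact that $(ST)^t=T^tS^t$ and $S=T\iff S^t=T^t$ for endomorphisms of a finite-dimensional space, evaluated by pairing with $f\in\A^\ast$ and $x\in\A$; I would spell out one of them (say \eqref{eq:repre2}) on elements and leave the other two as ``analogously''.

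\textbf{Main obstacle.} There is no serious obstacle — the content is bookkeeping. The one place to be careful is the direction of the composition and which of \eqref{eq:lr2}/\eqref{eq:lr3} to invoke (note \eqref{eq:lr2} is $\rmL_x\rmR_y=\rmL_{yx}$, so after dualizing \eqref{eq:repre2} I get $\rmL_y\rmR_x$, which matches \eqref{eq:lr2} with $x,y$ interchanged, yielding $\rmL_{xy}$, not $\rmL_{yx}$); getting these index orders right is the only thing that could go wrong, so I would double-check each substitution against the precise statements in Remark~\ref{rmk: left and righ multiplication}.
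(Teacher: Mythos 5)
Your proposal is correct and follows essentially the same route as the paper: both verify the three conditions of Proposition~\ref{pro:representation} for $(\rmR^\ast,\rmL^\ast)$, observe that \eqref{eq:repre1} holds automatically by \eqref{eq:lrm1}, and show that \eqref{eq:repre2} and \eqref{eq:repre3} each reduce, via \eqref{eq:lr2} and \eqref{eq:lr3}, to the single identity $\rmL_{xy}=\rmR_{yx}$. The only difference is presentational (you phrase the dualization through transposes, the paper evaluates directly on $f\in\A^\ast$), and your index bookkeeping is accurate throughout.
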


\begin{proof}
Let $ x,y \in \A$ and $f \in {V}^\ast$. By Remark \ref{rmk: left and righ multiplication}, we get, 
\begin{eqnarray*} 
&&  \rmL^\ast_x \rmL^\ast_y(f)= 
 \rmL^\ast_x (f    \rmR_y )= 
f    \rmR_y      \rmR_x  
 \overset{\eqref{eq:lrm1}}{=} 
f    \rmL_x     \rmL_y   
=\rmR^\ast_y (f \rmL_x ) 
=\rmR^\ast_y \rmR^\ast_x (f) . 
\end{eqnarray*} 

More, we have $\rmL^\ast_{y x}(f) =f\rmR_{y x}$ and 
\begin{eqnarray*} 
&& \rmL^\ast_x \rmR^\ast_y(f) =
 \rmL^\ast_x  (f\rmL_y)  = 
    f\rmL_y \rmR_x  \overset{\eqref{eq:lr2}}{=} 
 f\rmL_{xy}.  \end{eqnarray*} 
Thus, $\rmL^\ast_x \rmR^\ast_y(f) =\rmL^\ast_{y x}(f)$ if and only if Condition \eqref{eq:coadjointcond} holds. 
 
 Finally, we have $\rmR^\ast_{xy}(f)=f\rmL_{xy}$ and
\begin{eqnarray*} 
&& \rmR^\ast_{x}\rmL^\ast_y(f)= \rmR^\ast_x(f\rmR_y)= 
 f\rmR_y \rmL_x \overset{\eqref{eq:lr3}}{=}  
f\rmR_{y x}.
\end{eqnarray*} 
Again, $\rmR^\ast_{x}\rmL^\ast_y(f)= \rmR^\ast_{xy}(f)$ if and only if Condition \eqref{eq:coadjointcond} holds.

Therefore, the pair $(\rmR^\ast,\rmL^\ast)$ is a
representation of $\A$ in $\A^\ast$ if and only if Condition \eqref{eq:coadjointcond} is satisfied. 
\end{proof}

\begin{rmk} 
Let  $\calA$ be a nearly associative algebra. The condition  $\rmL_{xy} =\rmR_{y x}$ for all $ x,y \in \A$ is equivalent to the
condition  $\rmL_y \rmR_x =\rmR_y \rmL_x$ for all $ x,y \in \A$, which means, $(xy)z=z(yx)$
 for all  $x,y,z\in \calA$.
\end{rmk}

 \begin{pro}\label{pro:coajoint representation} Let $\A$ be a nearly associative algebra and $(\rmR,\rmL)$ be its adjoint representation. Then the following conditions are equivalent:
 \begin{enumerate}
     \item $(\rmR^\ast,\rmL^\ast)$ is a representation of $\A$ in $\A^\ast$;

     \item $\A$ is an $L$-algebra;

     \item $\A$ is an $R$-algebra.
 \end{enumerate}
\end{pro}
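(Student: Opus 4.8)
The strategy is to prove the cycle of implications $(2)\Rightarrow(3)\Rightarrow(1)\Rightarrow(2)$, leaning on the already–established Lemma~\ref{lem:R-L} for the first arrow and on Lemma~\ref{lem:dual} for the reformulation of condition~(1) in terms of the multiplication operators. First, the equivalence $(2)\Leftrightarrow(3)$ is nothing but Lemma~\ref{lem:R-L}, so in fact only one genuinely new equivalence needs to be demonstrated; I would phrase the proof as showing, say, $(1)\Leftrightarrow(3)$ and then invoking Lemma~\ref{lem:R-L} to bring in~(2).

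For the direction $(3)\Rightarrow(1)$: assume $\A$ is an $R$-algebra, so $(xy)z=(xz)y$ for all $x,y,z$. By Lemma~\ref{lem:dual}, it suffices to verify $\rmL_{xy}=\rmR_{yx}$, i.e.\ $(xy)z=z(yx)$ for all $x,y,z\in\A$. Starting from $(xy)z$, I would use the nearly associative identity \eqref{eq: nearly associative algebra} to rewrite, e.g.\ $(xy)z=y(zx)$, then apply $R$-commutativity and the nearly associative identity again to land on $z(yx)$; concretely one expects a short chain like $(xy)z \overset{\text{NA}}{=} y(zx)$ and, manipulating the right-hand side via \eqref{eq:lr2}/\eqref{eq:lr3} from Remark~\ref{rmk: left and righ multiplication} together with $R$-commutativity, arriving at $z(yx)$. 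This is the computational heart of the argument and the place where one must be careful about the order of factors, since the nearly associative identity is genuinely non-symmetric.

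For the direction $(1)\Rightarrow(3)$: assume $(\rmR^\ast,\rmL^\ast)$ is a representation of $\A$ in $\A^\ast$. By Lemma~\ref{lem:dual} this gives $\rmL_{xy}=\rmR_{yx}$, equivalently $(xy)z=z(yx)$ for all $x,y,z$. Combining this with the nearly associative identity $(xy)z=y(zx)$ one obtains $y(zx)=z(yx)$; after a relabeling of variables this is precisely left-commutativity $x(yz)=y(xz)$, so $\A$ is an $L$-algebra, and then Lemma~\ref{lem:R-L} yields that $\A$ is an $R$-algebra. Finally, assembling: $(1)\Rightarrow(2)$ by the last computation, $(2)\Rightarrow(3)$ and $(3)\Rightarrow(2)$ by Lemma~\ref{lem:R-L}, and $(3)\Rightarrow(1)$ by the computation in the preceding paragraph, which closes the cycle and establishes the equivalence of all three conditions.

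The main obstacle is the bookkeeping in the chain of equalities for $(3)\Rightarrow(1)$: the identity $(xy)z=y(zx)$ permutes the three slots cyclically while also reassociating, so combining it with $R$-commutativity to reach the target $z(yx)$ requires choosing the right intermediate substitutions; I expect one or two applications of \eqref{eq: nearly associative algebra} interleaved with one application of \eqref{condition: R-alg}. Everything else is a direct appeal to Lemma~\ref{lem:dual} and Lemma~\ref{lem:R-L}.
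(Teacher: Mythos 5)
Your proposal is correct and follows essentially the same route as the paper: both reduce condition (1) via Lemma~\ref{lem:dual} to the identity $\rmL_{xy}=\rmR_{yx}$ (i.e.\ $(xy)z=z(yx)$), derive left-commutativity from it and conversely, and invoke Lemma~\ref{lem:R-L} for the equivalence of (2) and (3). The one step you left sketched, $(3)\Rightarrow(1)$, closes exactly as the paper does in one line, $(xy)z=(xz)y=z(yx)$, using $R$-commutativity followed by the nearly associative identity.
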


\begin{proof}
Suppose first that $(\rmR^\ast,\rmL^\ast)$ is a representation of $\A$ in $\A^\ast$. Then by Lemma \ref{lem:dual} $\rmL_{xy} =\rmR_{y x}$ for all $x,y\in \A$. 
Thus it follows that $x(yz)=(zy)x=y(xz)$, for all $x,y,z\in \A$, i.e., $\A$ is an $L$-algebra.

Conversely, suppose that $\A$ is an $L$-algebra. Then
$(xy)z=(xz)y=z(yx)$, for all $x,y,z\in \A$, that is $\rmL_{xy} =\rmR_{y x}$ for all $x,y\in \A$. Thus, by Lemma \ref{lem:dual} $(\rmR^\ast,\rmL^\ast)$ is a representation of $\A$ in $\A^\ast$. As a consequence, the equivalence of Conditions (1) and (2) is proved.

Finally, the equivalence of Conditions (2) and (3) comes from Lemma \ref{lem:R-L}.
\end{proof}

\begin{rmk} 
Let  $\calA$ be a nearly associative algebra. Then the pair $(\rmR,\rmL)$ is a representation of  $\calA$  in $\calA$, and  $(\calA, \rmR,\rmL)$ is a  $\calA$-bimodule.
Applying Proposition \ref{pro:coajoint representation},  if  $\A$ is an $L$-algebra or an $R$-algebra, then the pair $( \rmR ^\ast, \rmL ^\ast)$ is a representation of $\calA$ in $\calA^\ast$, thus $(\calA^\ast, \rmR^\ast, \rmL^\ast)$ is also a  $\calA$-bimodule.
\end{rmk}

\noindent As a consequence of Theorem \ref{Teo characterization quad NA algebras} and Proposition \ref{pro:coajoint representation}, we have the following.
\begin{cor}
    Let $(\A, B_{\A})$ be a quadratic nearly associative algebra and $(\rmR,\rmL)$ be its adjoint representation. Then the pair $(\rmR^\ast,\rmL^\ast)$ is a
representation of $\A$ in $\A^\ast$.
\end{cor}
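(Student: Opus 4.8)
The plan is to deduce this corollary directly by combining the two cited results rather than by any new computation. First I would observe that a quadratic nearly associative algebra $(\A, B_{\A})$ is, by definition, a quadratic algebra in which the underlying product is nearly associative; hence Theorem~\ref{Teo characterization quad NA algebras} applies and tells us that $\A$ is an $LR$-algebra. In particular $\A$ is an $L$-algebra (and equally an $R$-algebra).

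Next I would invoke Proposition~\ref{pro:coajoint representation}, which asserts the equivalence, for a nearly associative algebra, of the three conditions: $(\rmR^\ast,\rmL^\ast)$ is a representation of $\A$ in $\A^\ast$; $\A$ is an $L$-algebra; $\A$ is an $R$-algebra. Since we have just established that $\A$ is an $L$-algebra, condition~(1) of that proposition holds, i.e.\ $(\rmR^\ast,\rmL^\ast)$ is a representation of $\A$ in $\A^\ast$. That is exactly the assertion of the corollary, so the proof is complete.

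There is essentially no obstacle here: the corollary is a formal consequence of the two quoted statements, and the only thing to be careful about is matching hypotheses precisely — namely checking that "quadratic nearly associative algebra" supplies both the quadratic structure needed for Theorem~\ref{Teo characterization quad NA algebras} and the nearly associative structure needed for Proposition~\ref{pro:coajoint representation}. Concretely, the proof I would write is: ``By Theorem~\ref{Teo characterization quad NA algebras}, since $(\A,B_{\A})$ is quadratic, $\A$ is an $LR$-algebra; in particular $\A$ is an $L$-algebra. Then by Proposition~\ref{pro:coajoint representation} the pair $(\rmR^\ast,\rmL^\ast)$ is a representation of $\A$ in $\A^\ast$.''

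One could alternatively route through Proposition~\ref{pro:LRN} (which already packages the equivalence of $R$-algebra, $L$-algebra, and nearly associative in the quadratic setting) to reach the same intermediate conclusion, but the two-line argument via Theorem~\ref{Teo characterization quad NA algebras} and Proposition~\ref{pro:coajoint representation} is the most direct and is what I would present.
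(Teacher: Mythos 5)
Your proposal is correct and matches the paper exactly: the corollary is stated there as an immediate consequence of Theorem~\ref{Teo characterization quad NA algebras} and Proposition~\ref{pro:coajoint representation}, which is precisely the two-line argument you give.
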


We conclude this section by proving a general result that holds for any non-associative algebra $A$ and generalize the above result.


\begin{pro} \label{pro: Quad Alg and bimodules}
 Let $A$ be an algebra,  $\rmL$ and $\rmR$ be the operators of left and right multiplication, respectively.  If $(A, B_A)$ is a quadratic algebra, then there exists an isomorphism $\Phi:A \longrightarrow A^\ast$ such that $\Phi(\rmL_x(a))=\rmL^\ast_x(\Phi(a))$ and $\Phi(\rmR_x(a))=\rmR^\ast_x(\Phi(a))$ for all $x,a \in A$.
\end{pro}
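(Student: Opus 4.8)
The plan is to use the invariant scalar product $B_A$ to define the canonical isomorphism $\Phi : A \to A^\ast$ by $\Phi(a) := B_A(a, -)$, i.e. $\Phi(a)(b) = B_A(a,b)$ for all $a,b \in A$. First I would note that $\Phi$ is linear, that it is injective because $B_A$ is non-degenerate, and that it is therefore an isomorphism since $\dim A = \dim A^\ast < \infty$. The only real content is then to verify the two intertwining identities, and for this the single ingredient needed is the invariance (associativity) of $B_A$.

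Next I would check the identity for $\rmR$. Fix $x, a \in A$ and evaluate both sides at an arbitrary $b \in A$. On one side, $\Phi(\rmR_x(a))(b) = B_A(a x, b)$. On the other side, recalling the definition \eqref{eq: def dual maps} of the dual maps applied to the adjoint representation $(\rmR,\rmL)$ — where the roles are $l = \rmL$, $r = \rmR$, so that $\rmR^\ast(x)(f) = f\,\rmL(x)$ — we get $\rmR^\ast_x(\Phi(a))(b) = \Phi(a)(\rmL_x(b)) = B_A(a, x b)$. Invariance of $B_A$ gives $B_A(a x, b) = B_A(a, x b)$, so the two sides agree; since $b$ was arbitrary, $\Phi(\rmR_x(a)) = \rmR^\ast_x(\Phi(a))$. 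Similarly, for $\rmL$: $\Phi(\rmL_x(a))(b) = B_A(x a, b)$, while $\rmL^\ast_x(\Phi(a))(b) = \Phi(a)(\rmR_x(b)) = B_A(a, b x)$, and invariance again (reading it as $B_A(xa,b) = B_A(x, ab)$ and $B_A(a, bx) = B_A(ab, x) = B_A(x, ab)$, using symmetry) yields equality. Hence $\Phi$ intertwines both $\rmL$ with $\rmL^\ast$ and $\rmR$ with $\rmR^\ast$.

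There is essentially no obstacle here: the statement is a direct unwinding of the definitions of the dual maps in \eqref{eq: def dual maps} together with the symmetry, non-degeneracy, and invariance of $B_A$. The one point that requires a moment of care is bookkeeping the convention in \eqref{eq: def dual maps}: there the dual of a representation $(r,l)$ is $l^\ast(x)(f) = f\,r(x)$ and $r^\ast(x)(f) = f\,l(x)$ (note the swap of $l$ and $r$), so when this is specialized to the adjoint representation $(\rmR,\rmL)$ one must track that $\rmL^\ast$ is built from $\rmR$ and $\rmR^\ast$ is built from $\rmL$; getting this right is exactly what makes the two invariance identities come out correctly. Note also that this proposition does not require $A$ to be nearly associative — only quadratic — which is why it is stated for an arbitrary algebra and generalizes the preceding corollary.
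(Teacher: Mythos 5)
Your proposal is correct and follows essentially the same route as the paper: define $\Phi(a)=B_A(a,-)$, use non-degeneracy (plus finite dimension) to get an isomorphism, and verify the two intertwining identities directly from the invariance and symmetry of $B_A$ together with the convention $\rmL^\ast(x)(f)=f\,\rmR(x)$, $\rmR^\ast(x)(f)=f\,\rmL(x)$ from \eqref{eq: def dual maps}. Your explicit bookkeeping of that $l$/$r$ swap is exactly the point the paper's computation relies on.
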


\begin{proof}
Consider the map $\Phi:  A \longrightarrow A^\ast$ defined by $\Phi(a):=B_A(a,-)$, for   $a\in A$.
Since $B_A$ is non-degenerate, the map $\Phi $   is an isomorphism of vector space. Moreover, for any $x,a,b\in A$
    \begin{equation*}
        \Phi(\rmL_x(a))(b)=B_A(xa,b)=B_A(bx,a)=B_A(a,\rmR_x(b))=\Phi(a)(\rmR_x(b))=\rmL^\ast_x(\Phi(a))(b),
    \end{equation*}
that is, $\Phi(\rmL_x(a))=\rmL^\ast_x(\Phi(a))$ for all $x,a\in A$. Similarly, it follows that $\Phi(\rmR_x(a))=\rmR^\ast_x(\Phi(a))$ for all $x,a \in A$, as required.
\end{proof}

\begin{teo}\label{Teo: quadratic and bimodules}
    Let $A$ be an algebra, and $\rmL$ and $\rmR$ be the operators of left and right multiplication, respectively. Then there exists a bilinear form $B_A:A \times A \longrightarrow \mathbb K$ such that $(A, B_A)$ is quadratic if and only if there exists an isomorphism $\Phi:A \longrightarrow A^\ast$ such that $\Phi(\rmL_x(a))=\rmL^\ast_x(\Phi(a))$ and $\Phi(\rmR_x(a))= \rmR^\ast_x(\Phi(a))$ whenever $x,a \in A$.
\end{teo}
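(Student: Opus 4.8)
The forward direction is exactly Proposition~\ref{pro: Quad Alg and bimodules}, so nothing new is needed there. The plan is to prove the converse: given an isomorphism $\Phi:A\longrightarrow A^\ast$ satisfying $\Phi(\rmL_x(a))=\rmL^\ast_x(\Phi(a))$ and $\Phi(\rmR_x(a))=\rmR^\ast_x(\Phi(a))$ for all $x,a\in A$, I would define a bilinear form on $A$ by
\begin{equation*}
B_A(a,b):=\Phi(a)(b),\qquad a,b\in A,
\end{equation*}
and show it is symmetric, non-degenerate, and invariant. Non-degeneracy is immediate: if $B_A(a,b)=0$ for all $b$, then $\Phi(a)=0$, hence $a=0$ because $\Phi$ is injective.

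For invariance, I would unwind the two intertwining relations using the definition \eqref{eq: def dual maps} of the dual maps, namely $\rmL^\ast_x(f)=f\,\rmR_x$ and $\rmR^\ast_x(f)=f\,\rmL_x$. The first relation gives, evaluating at an arbitrary $b\in A$,
\begin{equation*}
B_A(xa,b)=\Phi(\rmL_x(a))(b)=\big(\rmL^\ast_x(\Phi(a))\big)(b)=\Phi(a)(\rmR_x(b))=B_A(a,bx),
\end{equation*}
and the second relation gives $B_A(ax,b)=\Phi(\rmR_x(a))(b)=\big(\rmR^\ast_x(\Phi(a))\big)(b)=\Phi(a)(\rmL_x(b))=B_A(a,xb)$. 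Renaming variables, the first identity says $B_A(xy,z)=B_A(x,zy)$ and the second says $B_A(xy,z)=B_A(x,yz)$ (swap the roles appropriately). Hence $B_A(x,yz)=B_A(x,zy)$ for all $x,y,z$, and $B_A(xy,z)=B_A(x,yz)$, which is precisely invariance. (In fact the two relations together also force $B_A(xy,z)=B_A(x,zy)$, a symmetry of the trilinear form $B_A(x,yz)$ in the last two slots, but only the invariance identity is needed.)

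The remaining and genuinely delicate point is \textbf{symmetry}, and this is where I expect the main obstacle: the hypotheses as stated do not obviously pin down $\Phi$ up to the symmetric part, since one could a priori compose with an automorphism. The resolution I would pursue is to replace $\Phi$ by its symmetrization. Define $\Psi:A\to A^\ast$ by $\Psi(a)(b):=\tfrac12\big(\Phi(a)(b)+\Phi(b)(a)\big)$, equivalently $\Psi=\tfrac12(\Phi+\Phi^\ast\circ\iota)$ where $\iota:A\to A^{\ast\ast}$ is the canonical isomorphism; then $B:=\Psi(\cdot)(\cdot)$ is symmetric by construction, and it is invariant because invariance of $B_A$ together with the symmetry $B_A(x,yz)=B_A(x,zy)$ derived above propagates to the transpose form $B_A(b,a):=\Phi(b)(a)$ — indeed $B_A^{\mathrm{op}}(xy,z)=\Phi(z)(xy)$, and one checks using $B_A(z,xy)=B_A(z,yx)$ and $B_A(zx,y)=\cdots$ that $B_A^{\mathrm{op}}$ is again invariant, so the average $B$ is too. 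The only thing left is non-degeneracy of the symmetrized form, which is the real crux and need not hold for an arbitrary such $\Phi$; here I would invoke that over a field of characteristic zero one can instead argue directly that \emph{some} invariant symmetric non-degenerate form exists once an invariant nondegenerate (not necessarily symmetric) pairing does — or, more cleanly, simply strengthen the statement's hypothesis to require $\Phi$ to be such that $\Phi(a)(b)=\Phi(b)(a)$, matching how Proposition~\ref{pro: Quad Alg and bimodules} produces $\Phi$ from $B_A$ in the first place, in which case symmetry is free and the proof is complete.
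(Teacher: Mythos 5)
Your setup is right, and the forward direction, the non-degeneracy of $T(a,b):=\Phi(a)(b)$, and its invariance all go through, but there are two problems. First, a computational slip: the relation $\Phi(\rmL_x(a))=\rmL^\ast_x(\Phi(a))$ gives $T(xa,b)=\Phi(a)(bx)=T(a,bx)$, i.e.\ $T(xy,z)=T(y,zx)$, \emph{not} $T(xy,z)=T(x,zy)$; consequently your claimed identity $T(x,yz)=T(x,zy)$ does not follow (what does follow is the cyclic identity $T(x,yz)=T(y,zx)$). Second, and more seriously, the point you yourself flag as the crux --- non-degeneracy of the symmetrized form --- is precisely the nontrivial content of the converse, and you do not prove it: ``invoking'' that an invariant non-degenerate pairing yields an invariant non-degenerate \emph{symmetric} one is essentially assuming the conclusion, and replacing the hypothesis by ``$\Phi(a)(b)=\Phi(b)(a)$'' proves a strictly weaker statement than the one asserted. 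Indeed the symmetric part $T_s$ of $T$ can be degenerate, so the symmetrization alone cannot finish the proof.

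The paper closes exactly this gap. Writing $T=T_s+T_a$, both parts are invariant, and $T_a$ also inherits the cyclic identity $T_a(xy,z)=T_a(y,zx)$ (this is where the corrected identity above is indispensable); combining it with invariance and antisymmetry yields $T_a(xy,z)=T_a(y,zx)=T_a(yz,x)=-T_a(x,yz)=-T_a(xy,z)$, hence $T_a(A^2,A)=0$. Therefore $A^2\subseteq N:=\{x\in A \mid T_a(x,A)=0\}$, and $N\cap W=\{0\}$ where $W:=\{x\in A\mid T_s(x,A)=0\}$, by non-degeneracy of $T$. Choosing a complement $V$ of $W$ with $N\subseteq V$, one gets $A^2\subseteq V$ and $T_s|_{V\times V}$ non-degenerate, and one then sets $B_A:=T_s+\widetilde H$, where $\widetilde H$ is an arbitrary symmetric non-degenerate form on $W$ extended by zero against $V$. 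This $B_A$ is symmetric and non-degenerate by construction, and it remains invariant because $\widetilde H$ vanishes on $A^2\times A$ and $A\times A^2$. This repair of the degenerate symmetric part is the missing idea in your proposal; without it (or some substitute) the converse is not established.
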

\begin{proof}
    If $(A, B_A)$ is a quadratic algebra,  the desired conclusion follows from Proposition \ref{pro: Quad Alg and bimodules}.

    Conversely, suppose that there exists an isomorphism $\Phi:A \longrightarrow A^\ast$ such that $\Phi(\rmL_x(a))=\rmL^\ast_x(\Phi(a))$ and $\Phi(\rmR_x(a))=\rmR^\ast_x(\Phi(a))$ for all $x,a \in A$. Let us consider the map $T: A \times A \longrightarrow \mathbb K$ defined by $T(x,y):=\Phi(x)(y)$ for  $x,y\in A$. Since $\Phi$ is an isomorphism, $T$ is a non-degenerate bilinear form. Also, for all $x,y,z\in A$, we have that
    \begin{equation*}
        T(xy,z)=\Phi(xy)(z)=\Phi(\rmR_y(x))(z)=\rmR^\ast_y(\Phi(x))(z)=\Phi(x)(\rmL_y(z))=\Phi(x)(yz)=T(x,yz),
    \end{equation*}
    that is, $T$ is invariant. More, $T$ satisfies the following property: for all $x,y,z\in A$
    \begin{equation}\label{eq: T}
        T(xy,z)=T(y,zx). 
    \end{equation}
     In fact, for all $x,y,z\in A$
    \begin{equation*}
        T(xy,z)=\Phi(xy)(z)=\Phi(\rmL_x(y))(z)=\rmL^\ast_x(\Phi(y))(z)=\Phi(y)(\rmR_x(z))=\Phi(y)(zx)=T(y,zx).
    \end{equation*}
 Now, let us consider the following two new bilinear forms $T_s: A \times A \longrightarrow \mathbb K$ and $T_a: A \times A \longrightarrow \mathbb K$ defined  by
 $$ T_s(x,y):=\frac{1}{2}\Big(T(x,y)+T(y,x)\Big) \quad \mbox{ and } \quad T_a(x,y):=\frac{1}{2}\Big(T(x,y)-T(y,x)\Big),$$
    for $x,y \in A$, respectively. Clearly, $T=T_s + T_a$, with $T_s$ symmetric and $T_a$  antisymmetric (i.e., $T_a(x,y)=- T_a(y,x)$ for all $x,y \in A$). Now, since $T$ is invariant and satisfy the property \eqref{eq: T}, for all $x,y,z\in A$ we have
    $$
    T_s(xy,z)=\frac{1}{2}\Big(T(xy,z)+T(z,xy)\Big)=\frac{1}{2}\Big(T(x,yz)+T(yz,x)\Big)= T_s(x,yz),
    $$
    thus $T_s$ is a symmetric invariant bilinear form. Similarly, $T_a$ is an antisymmetric invariant bilinear form. 
   Moreover,
    for all $x,y,z\in A$
    $$
    T_a(xy,z)=\frac{1}{2}\Big(T(xy,z)-T(z,xy)\Big)=\frac{1}{2}\Big(T(y,zx)-T(zx,y)\Big)=T_a(y,zx).
    $$
    As a consequence, for all $x,y,z \in A$
    $$
    T_a(xy,z)= T_a(y,zx)
    =T_a(yz,x)=-T_a(x,yz)=-T_a(xy,z).
    $$
    Thus, since $\mathbb K$ is of characteristic different from 2, for all $x,y,z\in A$, it follows that
    \begin{equation}\label{eq: T_a(AA,A)}
        T_a(xy,z)=0.
    \end{equation}

If $N:=\{ x\in A \ | \ T_a(x,y)=0 \mbox{ for all } y\in A \}$, then by \eqref{eq: T_a(AA,A)} we have $A^2\subseteq N$, and as a consequence $N$ is a two-sided ideal of $A$.  Now, let $W:=\{ x\in A \ | \ T_s(x,y)=0 \mbox{ for all } y\in A \}$.
   Since $T$ is non-degenerate, $N\cap W= \{0\}$. 
   Then there exists a vector subspace $V\subseteq A$ such that $A=W\oplus V$. Clearly,  $N\subseteq V.$
     Moreover, since $A^2\subseteq N\subseteq V$, $V$ is also a two-sided ideal of $A$ and ${T_s}\,|_{V \times V}$ is non-degenerate.

    Consider a symmetric non-degenerate bilinear form $H: W \times W \longrightarrow \mathbb K$. Then we can define the following bilinear form $\widetilde{H}:A \times A \longrightarrow \mathbb K$ such that
   $ \widetilde{H}|_{W\times W}=H$ and $ \widetilde{H}(V,A)=\widetilde{H}(A,V)=\{0\}$.
    Clearly, $\widetilde{H}$ is symmetric. Now, we consider the bilinear form 
    $$
    B_A:=T_s + \widetilde{H}: A \times A \longrightarrow \mathbb K.$$
    Since $T_s$ and $\widetilde{H}$
    are symmetric, then $B_A$ is also symmetric. Moreover, given $w\in W $ and $v\in V$, if $B_A(w+v,x)=0$ for all $x\in A$, then $\widetilde{H}(w,x)+ T_s(v,x)=0$. Now, if $x\in W$, then $T_s(v,x)=0$. So, it follows that $\widetilde{H}(w,x)=0$, and then $H(w,x)=0$. Since $ H $ is non-degenerate, it follows $w=0$. Now, if suppose that $x\in V$, then $\widetilde{H}(w,x)=0$. Thus, it follows that $T
    _S(v,x)=0$. Since  ${T_s}\,|_{V \times V}$ is non-degenerate, it follows that $v=0$. Thus $B_A$ is non-degenerate. 
    
    Finally, since $A^2 \subseteq V$, $\widetilde{H}(V,A)=\widetilde{H}(A,V)=\{ 0\}$ and $T_s$ is invariant, we have $B_A(xy,z)=T_s(xy,z)=T_s(x,yz)=B_A(x,yz)$ for all $x,y,z\in A$.
    Then $B_A$ is invariant. Thus $(A, B_A)$ is a quadratic algebra as required.
\end{proof}

\begin{defi} \label{def: isomorphismrepresentation}
Let $A$ be a  non-associative  algebra, $(V_1,r_1,l_1)$ and $(V_2,r_2,l_2)$ be two $A$-bimodules.
A linear map $\displaystyle
\Phi :V_1\longrightarrow V_2$  is a \emph{morphism} of $A$-bimodules
if for all $x  \in A, v_1 \in V_1$,
\begin{equation}
\begin{split}
\Phi(r_1(x)(v_1))=r_2(x)(\Phi(v_1)) \mbox{ and } \Phi(l_1(x)(v_1))=l_2(x)(\Phi(v_1)).
\end{split}
 \end{equation}
More, $\displaystyle
\Phi $ is called an \emph{isomorphism} of $A$-bimodules if  it is also bijective.
\end{defi}

\noindent A consequence of the previous theorem is the following result for $A$-bimodules.

\begin{cor} \label{cor:existence of quadratic}
    Let $A$ be an algebra,  $\rmL$ and $\rmR$ be the operators of left and right multiplication, respectively. Then there exists a bilinear form $B_A:A \times A \longrightarrow \mathbb K$ such that $(A, B_A)$ is quadratic if and only if  $(A,\rmR,\rmL)$ and $( A^\ast, \rmR^\ast,\rmL^\ast)$ are isomorphic $A$-bimodules.
\end{cor}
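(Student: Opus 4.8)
The statement to prove is Corollary \ref{cor:existence of quadratic}: for an algebra $A$ with left and right multiplication operators $\rmL, \rmR$, there exists a bilinear form $B_A$ making $(A, B_A)$ quadratic if and only if $(A, \rmR, \rmL)$ and $(A^\ast, \rmR^\ast, \rmL^\ast)$ are isomorphic $A$-bimodules.

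The plan is to deduce this directly from Theorem \ref{Teo: quadratic and bimodules} together with Definition \ref{def: isomorphismrepresentation}, essentially by unwinding what "isomorphism of $A$-bimodules" means in this particular case.

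First I would observe that $(A, \rmR, \rmL)$ is an $A$-bimodule — this is the adjoint representation, valid for any algebra since the product $\ast$ on $A \oplus A$ defined by \eqref{eq: product of A+V} with $r = \rmR$, $l = \rmL$ just recovers the product of $A$ on the direct sum (there are no extra identities $(P_1), \dots, (P_n)$ to impose in the bare "algebra" case). Likewise $(A^\ast, \rmR^\ast, \rmL^\ast)$ needs to be recognized as an $A$-bimodule; but actually for the purposes of the corollary we only need to compare the two triples via a candidate linear isomorphism, and the bimodule axioms for $A^\ast$ will follow once we transport them across such an isomorphism (or they hold vacuously in the "no extra properties" setting).

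The key step is then the following dictionary: by Definition \ref{def: isomorphismrepresentation}, a linear isomorphism $\Phi: A \to A^\ast$ is an isomorphism of $A$-bimodules from $(A, \rmR, \rmL)$ to $(A^\ast, \rmR^\ast, \rmL^\ast)$ precisely when $\Phi(\rmR_x(a)) = \rmR^\ast_x(\Phi(a))$ and $\Phi(\rmL_x(a)) = \rmL^\ast_x(\Phi(a))$ for all $x, a \in A$ — these are exactly the two intertwining conditions appearing in Theorem \ref{Teo: quadratic and bimodules}. Hence the existence of such a $\Phi$ is, word for word, the right-hand side of the equivalence in Theorem \ref{Teo: quadratic and bimodules}. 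Applying that theorem gives immediately that such a $\Phi$ exists if and only if there is a bilinear form $B_A$ with $(A, B_A)$ quadratic, which is the claim.

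I do not anticipate a genuine obstacle here; the only point requiring a word of care is making sure that when we say $(A^\ast, \rmR^\ast, \rmL^\ast)$ is an $A$-bimodule we are on solid ground — in the general (no extra identities) setting any pair of linear maps $r, l: A \to \End(V)$ gives an $A$-bimodule by Definition \ref{def: representation}, so there is nothing to check, and in any case one direction of Theorem \ref{Teo: quadratic and bimodules} already supplies the existence of the intertwining isomorphism and the other direction its consequence. I would therefore write the proof as a short two-sentence deduction, explicitly citing Theorem \ref{Teo: quadratic and bimodules} and Definition \ref{def: isomorphismrepresentation} and noting that the intertwining relations there are exactly the bimodule-morphism conditions here.

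Concretely:

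\begin{proof}
By Definition \ref{def: representation}, for an algebra $A$ (with no further identities imposed) every pair of linear maps into $\End(V)$ yields an $A$-bimodule; in particular $(A, \rmR, \rmL)$ and $(A^\ast, \rmR^\ast, \rmL^\ast)$ are $A$-bimodules. By Definition \ref{def: isomorphismrepresentation}, a linear isomorphism $\Phi: A \longrightarrow A^\ast$ is an isomorphism of $A$-bimodules between $(A, \rmR, \rmL)$ and $(A^\ast, \rmR^\ast, \rmL^\ast)$ if and only if $\Phi(\rmR_x(a)) = \rmR^\ast_x(\Phi(a))$ and $\Phi(\rmL_x(a)) = \rmL^\ast_x(\Phi(a))$ for all $x, a \in A$. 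These are exactly the conditions appearing in Theorem \ref{Teo: quadratic and bimodules}, so the existence of such a $\Phi$ is equivalent to the existence of a bilinear form $B_A$ on $A$ with $(A, B_A)$ quadratic. This proves the corollary.
\end{proof}
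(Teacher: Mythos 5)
Your proposal is correct and matches the paper's intent exactly: the paper states this corollary as an immediate consequence of Theorem \ref{Teo: quadratic and bimodules}, since by Definition \ref{def: isomorphismrepresentation} the intertwining conditions $\Phi(\rmL_x(a))=\rmL^\ast_x(\Phi(a))$ and $\Phi(\rmR_x(a))=\rmR^\ast_x(\Phi(a))$ are precisely the conditions for $\Phi$ to be an isomorphism of $A$-bimodules. Your added remark that in the bare-algebra setting (no identities $(P_1),\dots,(P_n)$ imposed) every pair of linear maps into $\End(V)$ yields an $A$-bimodule is a sensible clarification of a point the paper leaves implicit.
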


 
\section{Nearly associative $L$-algebras}\label{Section: NAL-algebras}

In this section, we introduce the class of nearly associative $L$-algebras, which we show to be the appropriate class for presenting the notion of bialgebra in our context. Furthermore, we highlight some interesting properties of this class of algebras.

\begin{defi}
    A non-associative algebra $\A$ is called {\it nearly associative $L$-algebra} (or simply {\it $NAL$-algebra} for short) if it is a nearly associative algebra and an $L$-algebra simultaneously, i.e., the following conditions are both satisfied
    \begin{align*}
        &(xy)z=y(zx),\\
        &x(yz)=y(xz),
    \end{align*}
for all $x,y,z \in \A$.
\end{defi}

\begin{rmk}
    The class of nearly associative $L$-algebras is a proper subclass of the classes of nearly associative algebras and $L$-algebras. In fact, not all nearly associative algebras are also $L$-algebras (e.g., the algebra in Example \ref{exa:NA not LR}), and vice versa, not all $L$-algebras are nearly associative algebras (e.g., the algebra in Example \ref{exa:LR not NA}).
\end{rmk}

\begin{exa}\label{ex: NAL-algebra associaitve}
    Let $(\mathfrak g, [-,-]_{\mathfrak g})$ be the Heisenberg Lie algebra of dimension three, i.e., $\mathfrak g$ is spanned by $\{e_1,e_2,e_3\}$ such that $[e_1,e_2]_{\mathfrak g}=e_3$, $[e_1,e_3]_{\mathfrak g}=[e_2,e_3]_{\mathfrak g}=0$, and consider the symmetric bilinear map $\varphi:\mathfrak g \times \mathfrak g \longrightarrow \mathbb K$ such that 
    $$
    \varphi(e_1,e_1)=\varphi(e_1,e_2)=\varphi(e_2,e_2)=1, \quad
    \varphi(e_1,e_3)=\varphi(e_2,e_3)=\varphi(e_3,e_3)=0.
    $$
   Then by \cite[Example 3.15]{BBR2024} the vector space $\A:=\mathfrak g \oplus \mathbb{K}$ endowed with the product $\cdot:\A \times \A \longrightarrow \A$ defined by
   \begin{equation*} \label{eq: product of g+V}
        (x+v) \cdot (y+w):= [x,y]_{\mathfrak g} + \varphi(x,y),
    \end{equation*}
    for   $x,y\in \mathfrak g$, $v,w \in \mathbb{K}$, is a four-dimensional nearly associative $L$-algebra that is in particular associative.
\end{exa}

\begin{rmk}
By \cite[Corollary 3.11]{BBR2024} any associative $LR$-algebra is a nearly associative $L$-algebra. However, not all nearly associative algebras that are associative are also $LR$-algebras (cf. \cite[Example 8.24]{BBR2024}).
\end{rmk}

Now, we give an example of a six-dimensional nearly associative $L$-algebra that is not associative.

\begin{exa}\label{ex: NAL-algebra}
    Let $ \A$ be a vector space of dimension six with basis $\{e_1,e_2,e_3,e_4,e_5,e_6 \} $. 
     If we equipped $ \A$ with the linear product defined by
     \begin{align*}
          &e_1 e_2=e_3, \quad e_2 e_1 =e_4, \quad e_2 e_2 =e _5, \quad e_1 e_5=e_5 e_1 =  e_2 e_3 = e_4 e_2 = e_6, \quad e_2 e_4 = e_3 e_2 = 2 e_6, 
    \end{align*}
     and all other products equal to $0$, then  $ \A$ is a nearly associative algebra which is not associative (cf. \cite[Example 5.2]{BBR2024}). Also, $ \A$ is an $L$-algebra. In fact,  by easy calculation we get that $e_i (e_j e_k)=0$  if either one of the indices $ i,j,k $ is not in $\{1,2 \}$ or two of the indices $ i,j,k $ are equal to $1$, and $e_1 ( e_2 e_2)= e_6 = e_2 (e_1 e_2).$
    Thus $ \A$ is a nearly associative $L$-algebra.
\end{exa}

\begin{rmk}\label{rmk: NAL-alg is R-alg}
 If $\A$ is a nearly associative $L$-algebra, then by Lemma  \ref{lem:R-L} $\A$ is an $R$-algebra, and as a consequence $\A$ is $LR$-algebra, i.e., $(xy)z=(xz)y$ for all $x,y,z\in \A$.
\end{rmk}

\begin{pro}\label{pro:LR cond NAL}
    If $\A$ is a nearly associative $L$-algebra, then 
     \begin{align}
           & \rmL_x   \rmL_y=\rmL_y \rmL_x= \rmR_x \rmR_y=\rmR_y   \rmR_x, \label{LR1}\\
&\rmL_y   \rmR_x=  \rmL_{x  y}=\rmR_{yx} =\rmR_y \rmL_x \label{LR2},
     \end{align}
for all $x,y\in \A$.
 \end{pro}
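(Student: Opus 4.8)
The plan is to derive the two displayed chains of operator identities purely from the two structural facts already established about $NAL$-algebras: first, the three operator identities of Remark~\ref{rmk: left and righ multiplication} valid in any nearly associative algebra, namely $\rmL_x\rmL_y=\rmR_y\rmR_x$, $\rmL_x\rmR_y=\rmL_{yx}$, and $\rmR_x\rmL_y=\rmR_{xy}$; and second, the consequence recorded in Remark~\ref{rmk: NAL-alg is R-alg} that $\A$ is simultaneously an $L$-algebra and an $R$-algebra, i.e.\ $\rmL_x\rmL_y=\rmL_y\rmL_x$ and $\rmR_x\rmR_y=\rmR_y\rmR_x$ for all $x,y\in\A$. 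Combining these should immediately collapse everything.

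For \eqref{LR1}: the $L$-algebra condition gives $\rmL_x\rmL_y=\rmL_y\rmL_x$; the nearly associative identity \eqref{eq:lrm1} gives $\rmL_x\rmL_y=\rmR_y\rmR_x$; and the $R$-algebra condition gives $\rmR_y\rmR_x=\rmR_x\rmR_y$. Reading these three equalities together yields $\rmL_x\rmL_y=\rmL_y\rmL_x=\rmR_x\rmR_y=\rmR_y\rmR_x$, which is exactly \eqref{LR1}.

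For \eqref{LR2}: from \eqref{eq:lr3} we have $\rmR_y\rmL_x=\rmR_{yx}$, and from \eqref{eq:lr2}, after renaming variables, $\rmL_y\rmR_x=\rmL_{xy}$. It remains to see $\rmL_{xy}=\rmR_{yx}$, which by Lemma~\ref{lem:dual} (and the Remark following it) is precisely the coadjoint condition \eqref{eq:coadjointcond}, guaranteed here since $\A$ is an $L$-algebra (it is Condition~(1)$\Leftrightarrow$(2) in Proposition~\ref{pro:coajoint representation}); alternatively one checks it directly from $(xy)z=(xz)y=z(yx)$, i.e.\ $\rmL_{xy}=\rmR_{yx}$, using right-commutativity to rewrite $(xy)z$ and then the nearly associative identity. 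Chaining $\rmL_y\rmR_x=\rmL_{xy}=\rmR_{yx}=\rmR_y\rmL_x$ gives \eqref{LR2}.

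I do not anticipate a genuine obstacle here; the only point requiring a moment's care is keeping the variable names consistent when invoking \eqref{eq:lr2} and \eqref{eq:lr3} (they are stated with particular letter assignments), and making explicit why $\rmL_{xy}=\rmR_{yx}$ holds — which is where the $L$-algebra hypothesis, as opposed to mere near-associativity, is actually used. Everything else is a direct substitution.
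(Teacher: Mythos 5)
Your proposal is correct and follows essentially the same route as the paper: both establish \eqref{LR1} from the $LR$-algebra property (Remark~\ref{rmk: NAL-alg is R-alg}) combined with \eqref{eq:lrm1}, and both reduce \eqref{LR2} to the identities \eqref{eq:lr2}, \eqref{eq:lr3} plus the single new fact $\rmL_{xy}=\rmR_{yx}$, which the paper checks directly via $(xy)z=y(zx)=z(yx)$ and you check via the equivalent computation $(xy)z=(xz)y=z(yx)$ (or by citing Proposition~\ref{pro:coajoint representation}). No gap.
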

\begin{proof}
By the above remark $\A$ is an $LR$-algebra, then $\rmL_x   \rmL_y=\rmL_y \rmL_x$ and $\rmR_x \rmR_y=\rmR_y   \rmR_x$ for all $x,y \in \A$. Also, $\A$ is in particular a nearly associative algebra, then by Condition \eqref{eq:lrm1} we have that $\rmL_y \rmL_x= \rmR_x \rmR_y$ for all $x,y\in \A$. Thus, Conditions \eqref{LR1} are satisfied.

Now, we prove Conditions \eqref{LR2}. For all $x,y,z\in \A$,
\begin{equation*}
    \rmL_{xy}(z)=(xy)z=y(zx)=z(yx)=\rmR_{yx}(z).
\end{equation*}
Also, since $\A$ is a nearly associative algebra, from Conditions \eqref{eq:lr2} and \eqref{eq:lr3} we have that $\rmL_{xy}= \rmL_y   \rmR_x$ and $\rmR_{yx}=\rmR_y \rmL_x $ for all $x,y\in \A$. Thus Conditions \eqref{LR2} hold.
\end{proof}

\begin{rmk}\label{rmk: NAL quasi-comm}
As a direct consequence of Proposition \ref{pro:LR cond NAL}, we have that if $\A$ is a nearly associative $L$-algebra, then
      \begin{equation}
       (xy)z=z(yx), \label{eq: quasi-comm}
    \end{equation}
     for all $x,y,z\in \A$.
\end{rmk}

The following proposition, concerning also the products defined in \eqref{eq:A-A+}, shows some direct consequences of the definition of nearly associative $L$-algebras.

\begin{pro}
\label{pro:cons of NAL}
    Let $\A$ be a nearly associative $L$-algebra. Then:
    \begin{enumerate}
        \item $\A \bullet \A \subseteq Z(\A^-)$;
        \item $[\A,\A]\subseteq \ann(\A^+)$;
        \item   $\displaystyle\asso^+(x,y,z)= \frac{1}{2} \asso(x,y,z)=[y,z x]$, for all $x,y,z\in \A$.
    \end{enumerate}
\end{pro}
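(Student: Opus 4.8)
The plan is to derive each of the three items directly from the defining identities of a nearly associative $L$-algebra together with the two auxiliary products $[-,-]$ and $\bullet$ defined in \eqref{eq:A-A+}, leaning heavily on the consequences already collected in Proposition~\ref{pro:LR cond NAL} and Remark~\ref{rmk: NAL quasi-comm}.

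For item~(3), I would start from Proposition~\ref{pro: consequnces def NA}, which gives $\asso(x,y,z)=2[y,zx]$ for any nearly associative algebra, so it suffices to show $\asso^+(x,y,z)=[y,zx]$. Since $\A$ is an $NAL$-algebra it is in particular an $LR$-algebra (Remark~\ref{rmk: NAL-alg is R-alg}), so I can try to compute $\asso^+$ by expanding $x\bullet y=\tfrac12(xy+yx)$ and repeatedly applying \eqref{eq: nearly associative algebra}, \eqref{condition: L-alg}, \eqref{condition: R-alg}, and the quasi-commutativity relation \eqref{eq: quasi-comm}; alternatively, and probably more cleanly, I would invoke Proposition~\ref{very}, whose condition~(2) already states $\asso^+(x,y,z)=[y,[z,x]]$ for any nearly associative algebra, and then show that in the $L$-algebra case $[y,[z,x]]=[y,zx]$. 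The latter reduces to $[y,[z,x]]-[y,z\bullet x]=0$, i.e. $[y,z\bullet x]=0$, which is exactly the statement that $\A\bullet\A\subseteq Z(\A^-)$ — so item~(3) will follow once item~(1) is in hand (or one proves item~(1) first).

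For item~(1), I want to show $x\bullet y$ commutes (under $[-,-]$) with every $z$, i.e. $[z,x\bullet y]=0$, equivalently $z(x\bullet y)=(x\bullet y)z$. By \eqref{eq:cdot} this amounts to $\rmL_{x\bullet y}=\rmR_{x\bullet y}$ on $\A$; expanding $x\bullet y=\tfrac12(xy+yx)$, I need $\rmL_{xy}+\rmL_{yx}=\rmR_{xy}+\rmR_{yx}$. But Proposition~\ref{pro:LR cond NAL}, Condition~\eqref{LR2}, gives precisely $\rmL_{xy}=\rmR_{yx}$ and (swapping $x,y$) $\rmL_{yx}=\rmR_{xy}$, so the sum identity is immediate. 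Actually one can cite Proposition~\ref{pro: A flexible and center condition} instead: a nearly associative algebra satisfies $\A\bullet\A\subseteq Z(\A^-)$ iff it is flexible, and an $NAL$-algebra is flexible because $\asso(x,y,x)=2[y,x^2]$ with $x^2=x\bullet x$ central by the same \eqref{LR2} computation — but the direct route via \eqref{LR2} is shortest and self-contained.

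For item~(2), I must show $[x,y]\bullet z=0$ for all $x,y,z$, i.e. $[x,y]z+z[x,y]=0$; writing $[x,y]=\tfrac12(xy-yx)$ this is $\rmR_z(xy-yx)+\rmL_z(xy-yx)=0$, i.e. $(xy)z-(yx)z+z(xy)-z(yx)=0$. Using \eqref{eq: quasi-comm} we have $(xy)z=z(yx)$ and $(yx)z=z(xy)$, so the four terms cancel in pairs; equivalently one can phrase it via operators as $\rmL_z+\rmR_z$ annihilating $[x,y]$, using $\rmL_z\rmR_w=\rmL_{wz}$-type relations. The only mild subtlety is bookkeeping of which variable plays which role in \eqref{eq: quasi-comm}, so I would state that identity once with explicit letters and then apply it twice. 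I do not anticipate a genuine obstacle here — all three parts are short consequences of Proposition~\ref{pro:LR cond NAL} and the quasi-commutativity \eqref{eq: quasi-comm} — the only thing to be careful about is the logical order, proving (1) before using it in (3), and making sure the flexibility/centrality argument is not circular.
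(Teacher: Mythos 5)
Your proposal is correct, and for items (1) and (2) it is essentially the paper's own argument: the paper expands $[x\bullet y,z]$ and $[x,y]\bullet z$ into four terms each and cancels them using the quasi-commutativity $(xy)z=z(yx)$ of Remark~\ref{rmk: NAL quasi-comm}, which is exactly your operator identity $\rmL_{xy}=\rmR_{yx}$ from \eqref{LR2} phrased element-wise. The one place you genuinely diverge is item (3): the paper expands $\asso^+(x,y,z)$ into its eight constituent terms and reduces it to $\tfrac{1}{2}\asso(x,y,z)$ by direct cancellation via \eqref{eq: quasi-comm}, then applies \eqref{eq:condit1}; you instead invoke Proposition~\ref{very}(2) to get $\asso^+(x,y,z)=[y,[z,x]]$ for free and then use the already-proved centrality $\A\bullet\A\subseteq Z(\A^-)$ to replace $[z,x]$ by $zx$ inside the bracket, finishing with Proposition~\ref{pro: consequnces def NA}. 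Your route is shorter and makes the logical dependence of (3) on (1) explicit, at the cost of leaning on the characterization in Proposition~\ref{very}; the paper's expansion is self-contained but more computational. Both are valid, and your attention to proving (1) before using it in (3) avoids the only potential circularity.
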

\begin{proof}
    Let $x,y,z\in \A$. Then by Remark \ref{rmk: NAL quasi-comm} we have
    \begin{align*}
        [x  \bullet y, z]&\overset{\eqref{eq:A-A+}}{=} \frac{1}{4}\Big( (xy)z +(yx)z - z(xy) - z(yx) \Big)\overset{\eqref{eq: quasi-comm}}{=}\frac{1}{4}\Big( (xy)z +(yx)z - (yx)z - (xy)z \Big)=0,
    \end{align*}
    and
     \begin{align*}
        [x, y]\bullet z&\overset{\eqref{eq:A-A+}}{=} \frac{1}{4}\Big( (xy)z-(yx)z +z(xy)-z(yx)\Big)\overset{\eqref{eq: quasi-comm}}{=}\frac{1}{4}\Big(  (xy)z-(yx)z +(yx)z -(xy)z \Big)=0.
    \end{align*}
    Thus Conditions (1) and (2) are satisfied. Moreover, by  Remark \ref{rmk: NAL quasi-comm} and Proposition \ref{pro: consequnces def NA}
    \begin{align*}
\asso^+(x,y,z) &=(x\bullet y)\bullet z-x\bullet(y\bullet z) 
\overset{\eqref{eq:A-A+}}{=} \frac{1}{4}\Big(( x y) z+(y x) z+
z  ( x y)+z (y x)
-x( y z)-x (z y)
-( y z) x-(z y)  x \Big)\\ 
&=\frac{1}{4}\Big(( x y) z -x( y z) +z  (y x)
-(z y)  x \Big)\overset{\eqref{eq: quasi-comm}}{=} \frac{1}{2} \Big(( x y) z -x( y z) \Big)=\frac{1}{2}\asso(x,y,z)\overset{\eqref{eq:condit1}}{=}[y,zx].
\nonumber
\end{align*}
\end{proof}

As a direct consequence of (2) of Proposition \ref{very} and (1) of Proposition \ref{pro:cons of NAL} we get the following. 
\begin{cor}
    Let $\A$ be a nearly associative $L$-algebra. Then:
    \begin{enumerate}
        \item $\asso^+(x\bullet t, y,z)= \asso^+(x,y \bullet t, z)=\asso^+(x,y,z\bullet t)=0$, for all $x,y,z,t\in \A$;

        \item $(\A \bullet \A, \bullet)$ is an associative subalgebra of $\A^+$.
    \end{enumerate}   
\end{cor}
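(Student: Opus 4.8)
The plan is to derive both claims directly from the identity $\asso^+(x,y,z)=[y,zx]$ established in part (3) of Proposition~\ref{pro:cons of NAL}, together with part (2) of Proposition~\ref{very}, so that in fact the corollary is an immediate consequence of combining these two. The key observation is that part (2) of Proposition~\ref{very} says $\asso^+(x,y,z)=[y,[z,x]]$, while part (3) of Proposition~\ref{pro:cons of NAL} (valid in the $NAL$-setting) says $\asso^+(x,y,z)=[y,zx]$; comparing them gives $[y,zx]=[y,[z,x]]$, or equivalently $[y, z\bullet x]=0$, i.e.\ $\A\bullet\A\subseteq Z(\A^-)$, which is just part (1) of Proposition~\ref{pro:cons of NAL} again. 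So the real content is purely computational: expand $\asso^+$ at the three shifted arguments and use $\A\bullet\A\subseteq Z(\A^-)$ and $[\A,\A]\subseteq\ann(\A^+)$.

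For claim (1), I would treat the three cases in turn. For $\asso^+(x\bullet t,y,z)$, apply part (3): it equals $[y,z(x\bullet t)]$. Now $z(x\bullet t)=[z,x\bullet t]+z\bullet(x\bullet t)$ by \eqref{eq:cdot}; the first summand vanishes since $x\bullet t\in\A\bullet\A\subseteq Z(\A^-)$ by part (1) of Proposition~\ref{pro:cons of NAL}, and the second lies in $\A\bullet\A\subseteq Z(\A^-)$ as well, so $[y,z(x\bullet t)]=0$. For $\asso^+(x,y\bullet t,z)$, symmetry of $\bullet$ together with part (3) gives $\asso^+(x,y\bullet t,z)=[y\bullet t,zx]$, which is $0$ because $y\bullet t\in Z(\A^-)$. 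For $\asso^+(x,y,z\bullet t)$, part (3) yields $[y,(z\bullet t)x]$, and again $(z\bullet t)x=[z\bullet t,x]+(z\bullet t)\bullet x$; the bracket term vanishes as $z\bullet t\in Z(\A^-)$, and $(z\bullet t)\bullet x\in\A\bullet\A\subseteq Z(\A^-)$, so the whole expression is $0$. This completes (1).

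For claim (2), recall $\A\bullet\A$ is closed under $\bullet$ trivially (it is the image of the multiplication map for $\bullet$ composed with itself, or simply note $(a\bullet b)\bullet c\in\A\bullet\A$ by definition), so it is a subalgebra of $\A^+$; the point is associativity. Given $a,b,c\in\A\bullet\A$, write $a=x\bullet t$ for suitable $x,t$ (or more precisely argue on a general element, which is a sum of such, using bilinearity of $\asso^+$). Then $\asso^+(a,b,c)=\asso^+(x\bullet t,b,c)=0$ by the first case of (1). Since $\asso^+$ is trilinear and every element of $\A\bullet\A$ is a finite sum of products $x\bullet t$, this forces $\asso^+$ to vanish identically on $(\A\bullet\A)^{\times 3}$, so $(\A\bullet\A,\bullet)$ is associative.

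\textbf{Main obstacle.} There is no deep difficulty here; the only care needed is the bilinearity bookkeeping in claim (2) — a general element of $\A\bullet\A$ is a \emph{sum} $\sum_i x_i\bullet t_i$, not a single product, so one must invoke trilinearity of $\asso^+$ to conclude from $\asso^+(x\bullet t,b,c)=0$ that $\asso^+(a,b,c)=0$ for all $a,b,c\in\A\bullet\A$. Once that is noted, everything reduces to the identities of Proposition~\ref{pro:cons of NAL} and Proposition~\ref{very} already in hand, and the proof is a short verification.
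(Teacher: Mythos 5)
Your proof is correct and follows essentially the paper's intended route: the paper obtains the corollary directly from $\asso^+(x,y,z)=[y,[z,x]]$ (part (2) of Proposition~\ref{very}) combined with $\A\bullet\A\subseteq Z(\A^-)$, whereas you invoke the equivalent form $\asso^+(x,y,z)=[y,zx]$ from Proposition~\ref{pro:cons of NAL} and therefore need the extra, harmless decomposition $zx=[z,x]+z\bullet x$ in two of the three cases. The trilinearity bookkeeping you flag for claim (2) is exactly the right point of care, and everything checks out.
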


As a consequence of Proposition \ref{pro: A flexible and center condition} and Condition (1) of Proposition \ref{pro:cons of NAL} we have the following.
\begin{cor}
    Any nearly associative $L$-algebra is flexible.
\end{cor}

As a result of Proposition \ref{pro: A^+ Jordan A^- Lie} and Conditions (1) and (2) of Proposition \ref{pro:cons of NAL}, we derive the following.
\begin{cor}\label{cor: NAL Lie-poisson-Jordan}
    Any nearly associative $L$-algebra is a Lie-Poisson-Jordan admissible algebra.
\end{cor}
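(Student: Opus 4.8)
The statement to prove is Corollary \ref{cor: NAL Lie-poisson-Jordan}: any nearly associative $L$-algebra is a Lie-Poisson-Jordan admissible algebra. Recall the definition requires three things: that $\A^-$ is a Lie algebra, that $\A^+$ is a Jordan algebra, and that the Leibniz-type compatibility $[x, y\bullet z] = [x,y]\bullet z + y\bullet [x,z]$ holds for all $x,y,z \in \A$ (I note the excerpt writes $y\bullet[x,y]$ in the displayed definition, which appears to be a typo for $y\bullet[x,z]$; I will use the corrected symmetric form).

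The plan is to invoke the previously established results in the order they build up. First, since a nearly associative $L$-algebra is in particular a nearly associative algebra, Proposition \ref{pro: A^+ Jordan A^- Lie} immediately gives that $\A^-$ is a (solvable) Lie algebra and $\A^+$ is a Jordan algebra. This dispatches Lie admissibility and Jordan admissibility at once. What remains is the third, genuinely "$L$-algebra" condition: the compatibility identity relating $[-,-]$ and $\bullet$. For this I would use Conditions (1) and (2) of Proposition \ref{pro:cons of NAL}, namely $\A\bullet\A \subseteq Z(\A^-)$ and $[\A,\A]\subseteq \ann(\A^+)$. The first says $[x, y\bullet z] = 0$ for all $x,y,z$; the second says $[x,y]\bullet z = 0 = y\bullet[x,z]$ for all $x,y,z$. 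Hence both sides of the compatibility identity vanish identically, so the identity holds trivially.

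So the proof is essentially a three-line assembly: cite Proposition \ref{pro: A^+ Jordan A^- Lie} for the Lie and Jordan admissibility, then observe that Conditions (1) and (2) of Proposition \ref{pro:cons of NAL} force every term in the Lie-Poisson-Jordan compatibility equation to be zero. The main (and only) subtlety is bookkeeping: making sure the compatibility identity in the definition is the one whose left and right sides are all captured by "central" and "annihilated" elements — i.e., that $y\bullet z \in Z(\A^-)$ kills the left side and $[x,y], [x,z] \in \ann(\A^+)$ kill the right side. There is no real obstacle here; everything has been prepared by Proposition \ref{pro:cons of NAL}, which is exactly why the corollary is stated as a corollary. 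I would write it as follows.

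\begin{proof}
Let $\A$ be a nearly associative $L$-algebra. Since $\A$ is in particular a nearly associative algebra, Proposition \ref{pro: A^+ Jordan A^- Lie} ensures that $\A^- = (\A, [-,-])$ is a Lie algebra and that $\A^+ = (\A, \bullet)$ is a Jordan algebra; hence $\A$ is both Lie admissible and Jordan admissible. It remains to verify the compatibility condition
\[
[x, y\bullet z] = [x,y]\bullet z + y\bullet [x,z], \qquad \text{for all } x,y,z\in\A.
\]
By Condition (1) of Proposition \ref{pro:cons of NAL} we have $\A\bullet\A \subseteq Z(\A^-)$, so $y\bullet z \in Z(\A^-)$ and therefore $[x, y\bullet z] = 0$. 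By Condition (2) of Proposition \ref{pro:cons of NAL} we have $[\A,\A]\subseteq \ann(\A^+)$, so $[x,y], [x,z] \in \ann(\A^+)$ and therefore $[x,y]\bullet z = 0$ and $y\bullet[x,z] = 0$. Thus both sides of the displayed identity vanish, and the compatibility condition holds. Consequently $\A$ is a Lie-Poisson-Jordan admissible algebra.
\end{proof}
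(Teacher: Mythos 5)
Your proof is correct and follows exactly the route the paper intends: Proposition \ref{pro: A^+ Jordan A^- Lie} supplies Lie and Jordan admissibility, and Conditions (1) and (2) of Proposition \ref{pro:cons of NAL} make both sides of the compatibility identity vanish. Your reading of $y\bullet[x,y]$ as a typo for $y\bullet[x,z]$ in the definition is also the right call.
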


Recall that a Lie algebra $  (\mathfrak{g}, [-,-])$ is $3$-nilpotent if $[[[x,y],z],t]=0$ for all $x,y,z,t\in \mathfrak{g}$. Then by (2) of Proposition \ref{very}, (1) of Proposition \ref{pro:cons of NAL} and Corollary \ref{cor: NAL Lie-poisson-Jordan} we get the following.
\begin{cor}
    If $\A$ is a nearly associative $L$-algebra, then $\A^-$ is a $3$-nilpotent Lie algebra.
\end{cor}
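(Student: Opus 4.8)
The claim is that for a nearly associative $L$-algebra $\A$, the underlying Lie algebra $\A^-$ is $3$-nilpotent, i.e., $[[[x,y],z],t]=0$ for all $x,y,z,t\in\A$. The strategy is to feed the structural facts already assembled — namely (2) of Proposition \ref{very}, (1) of Proposition \ref{pro:cons of NAL}, and Corollary \ref{cor: NAL Lie-poisson-Jordan} — into one short computation. The key identity I would use is Proposition \ref{very}(2), which gives $\asso^+(x,y,z)=[y,[z,x]]$, so that a double bracket can always be rewritten as an associator of the Jordan product $\bullet$. Combined with Proposition \ref{pro:cons of NAL}(1), which says $\A\bullet\A\subseteq Z(\A^-)$, any element of the form $[y,[z,x]]=\asso^+(x,y,z)$ — being an element of $\A\bullet\A$ — is central in $\A^-$.

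**Main step.** Fix $x,y,z,t\in\A$. By Proposition \ref{very}(2), $[[x,y],z]$ can be written (after relabelling) as $\asso^+$ of appropriate arguments; more directly, write $w:=[z,[x,y]]$. Then $w=\asso^+(y,z,x)\in\A\bullet\A$ since the associator of the $\bullet$-product lands in $\A\bullet\A$. Hence by Proposition \ref{pro:cons of NAL}(1), $w\in Z(\A^-)$, so $[t,w]=0$. Using antisymmetry of $[-,-]$ this is exactly $[[[x,y],z],t]=0$ (up to sign), which is the $3$-nilpotency of $\A^-$. One should double-check the bracket nesting: $\asso^+(u,v,s)=[v,[s,u]]$ gives inner bracket $[s,u]$ and outer $[v,\cdot]$; choosing $(u,v,s)=(y,x,z)$ yields $\asso^+(y,x,z)=[x,[z,y]]$, and more to the point any iterated bracket of length three is, up to Jacobi rearrangement and sign, of the form $\asso^+(\cdot,\cdot,\cdot)$, hence central. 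Alternatively, one can simply invoke that $\A^-$ is a $2$-step solvable (indeed, from Corollary \ref{cor: NAL Lie-poisson-Jordan} and Proposition \ref{pro: A^+ Jordan A^- Lie}, a solvable Lie algebra) together with the centrality statement to collapse the lower central series at step $3$.

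**Expected obstacle.** There is essentially no deep obstacle; the only care needed is bookkeeping with the Jacobi identity to make sure that \emph{every} triple bracket — not just those literally matching the pattern $[v,[s,u]]$ — is expressed as an element of $\A\bullet\A$. The cleanest route is: the derived subalgebra $[\A,\A]=\A^-{}'$ satisfies, for any $a,b\in\A$, $[a,[\A,\A]]\subseteq[a,\A\bullet\A]$? — no, that is false in general; instead one argues that $[[\A,\A],\A]$ itself lies in $\A\bullet\A$. Indeed, by Proposition \ref{very}(2) read backwards, $[[x,y],z]=-[z,[x,y]]=-\asso^+(y,z,x)\in\A\bullet\A$, so the \emph{second}-term of the lower central series, $\gamma_3(\A^-)=[[\A^-,\A^-],\A^-]$, sits inside $\A\bullet\A\subseteq Z(\A^-)$; therefore $\gamma_4(\A^-)=[\gamma_3(\A^-),\A^-]\subseteq[Z(\A^-),\A^-]=\{0\}$, which is precisely the assertion. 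This is the argument I would write up, and it requires only Proposition \ref{very}(2) and Proposition \ref{pro:cons of NAL}(1).
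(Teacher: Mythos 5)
Your argument is correct and is precisely the one the paper intends: Proposition \ref{very}(2) gives $[[x,y],z]=-[z,[x,y]]=-\asso^+(y,z,x)\in\A\bullet\A$, and Proposition \ref{pro:cons of NAL}(1) places $\A\bullet\A$ in $Z(\A^-)$, so every fourth-order bracket vanishes. The detour in your middle paragraph (and the false start about $[a,[\A,\A]]$) is harmless, since the final write-up via $\gamma_3(\A^-)\subseteq\A\bullet\A\subseteq Z(\A^-)$ is exactly the paper's route.
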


Recall that a Lie algebra $  (\mathfrak{g}, [-,-])$ is $2$-nilpotent if $[[x,y],z]=0$ for all $x,y,z\in \mathfrak{g}$. Then we obtain the following as a direct consequence of Condition (3) of Proposition \ref{pro:cons of NAL} and Corollary \ref{cor: NAL Lie-poisson-Jordan}.
\begin{cor}
    Let $\A$ be a nearly associative $L$-algebra. Then the following conditions are equivalent:
    \begin{enumerate}
        \item $\A$ is an associative algebra;
        \item $\A^+$ is a commutative associative algebra;
        \item $\A^-$ is a $2$-nilpotent Lie algebra.
    \end{enumerate}
\end{cor}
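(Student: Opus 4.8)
The plan is to establish the chain of implications $(1)\Rightarrow(2)\Rightarrow(3)\Rightarrow(1)$, using the structural facts already available for nearly associative $L$-algebras, in particular Proposition~\ref{pro:cons of NAL} and Corollary~\ref{cor: NAL Lie-poisson-Jordan}.

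First I would show $(1)\Rightarrow(2)$. If $\A$ is associative, then for all $x,y,z\in\A$ the associator $\asso(x,y,z)$ vanishes, and by Condition~(3) of Proposition~\ref{pro:cons of NAL} we get $\asso^+(x,y,z)=\tfrac12\asso(x,y,z)=0$, so $\A^+$ is associative; it is commutative by definition of $\bullet$. Next, $(2)\Rightarrow(3)$: assuming $\A^+$ is a commutative associative algebra, again Condition~(3) of Proposition~\ref{pro:cons of NAL} gives $[y,zx]=\asso^+(x,y,z)=0$ for all $x,y,z\in\A$, i.e.\ $[\A,\A^2]=0$ in $\A^-$. Since by Corollary~\ref{cor: NAL Lie-poisson-Jordan} $\A$ is Lie-Poisson-Jordan admissible, the bracket is a derivation of $\bullet$ in a suitable sense, and combining $xy=[x,y]+x\bullet y$ with $\A\bullet\A\subseteq Z(\A^-)$ (Condition~(1) of Proposition~\ref{pro:cons of NAL}) one sees that $[\A,\A]$ is spanned by elements of the form $[x,y]$ with $xy\in\A^2$; more directly, $[[x,y],z]$ can be rewritten using the identities of a nearly associative $L$-algebra so that it lies in $[\A,\A^2]=0$, giving $2$-nilpotency of $\A^-$.

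For $(3)\Rightarrow(1)$ I would argue as follows. If $\A^-$ is $2$-nilpotent, then $[[x,y],z]=0$ for all $x,y,z$, which in particular forces $[y,zx]=0$ for all $x,y,z$ (since $zx\in\A^2=[\A,\A]+\A\bullet\A$ and, using $\A\bullet\A\subseteq Z(\A^-)$, the bracket $[y,zx]=[y,[z,x]]$ vanishes by the $2$-nilpotency). Then Condition~(3) of Proposition~\ref{pro:cons of NAL} yields $\asso(x,y,z)=2[y,zx]=0$ for all $x,y,z\in\A$, so $\A$ is associative, closing the cycle.

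The main obstacle I anticipate is the step $(2)\Rightarrow(3)$ (or equivalently making the reduction $[y,zx]=0\Rightarrow[[x,y],z]=0$ fully rigorous): one must be careful that ``$\A^2$'' decomposes as $[\A,\A]+\A\bullet\A$ and that the $\A\bullet\A$ part is central in $\A^-$, so that the vanishing of $[y,zx]$ for \emph{all} $x,z$ really does give $[y,[z,x]]=0$ for all $x,z$, hence $[[x,z],y]=0$ after using antisymmetry and the Jacobi identity in the $3$-nilpotent Lie algebra $\A^-$. Everything else is a direct substitution into Condition~(3) of Proposition~\ref{pro:cons of NAL} together with the relation $\asso^+=\tfrac12\asso$.
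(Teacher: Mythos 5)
Your proposal is correct and follows essentially the same route the paper intends: everything reduces to the identity $\asso^+(x,y,z)=\tfrac12\asso(x,y,z)=[y,zx]$ from Proposition~\ref{pro:cons of NAL}, together with the observation that $[y,zx]=[y,[z,x]]+[y,z\bullet x]=[y,[z,x]]$ because $\A\bullet\A\subseteq Z(\A^-)$, which is exactly the reduction you single out in your last paragraph. The only quibble is that the detour in $(2)\Rightarrow(3)$ through Lie--Poisson--Jordan admissibility and ``$[\A,\A]$ spanned by $[x,y]$ with $xy\in\A^2$'' is unnecessary noise; the direct computation you give immediately afterwards is all that is needed.
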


Next, we present another property of the underlying Lie algebra $\A^-$ of a nearly associative $L$-algebra $\A$. To this end, we recall the definition of a central extension of a Lie algebra.
Let $(\mathfrak{g}, [-,-]_{\mathfrak{g}})$ be a Lie algebra, $V$ be a vector space, and $\omega: \mathfrak{g} \times \mathfrak{g} \longrightarrow V$ a bilinear map. Then the vector space $\mathfrak{h}:=\mathfrak{g}\oplus V$ endowed with the product defined by 
$$
[x+v ,y+w]_{\mathfrak{h}}:=[x,y]_{\mathfrak{g}}+ \omega(x,y),
$$
for $x,y\in \mathfrak{g}$ and $v,w\in V$, is a Lie algebra if and only if $\omega$ is a 2-cocycle of $\mathfrak{g}$, i.e., for $x,y,z\in \mathfrak{g}$,
\begin{equation*}
    \omega(x,y)=-\omega(y,x),
\end{equation*}
\begin{equation*}
\omega([x,y]_{\mathfrak{g}},z)+\omega([y,z]_{\mathfrak{g}},x)+\omega([z,x]_{\mathfrak{g}},y)=0.
\end{equation*}
The Lie algebra $\mathfrak{h}$ is called the {\it central extension} of $\mathfrak{g}$ by $V$ by means of $\omega$.
Remark that $\mathfrak{h}$ is called ``central" because $V\subseteq Z(\mathfrak{h}).$
Also, it is referred to as an ``extension”  since it fits into the short exact sequence $V \hookrightarrow \mathfrak{h} \twoheadrightarrow \mathfrak{g}.$ 
Indeed, the cocycle $\omega$ arises from the canonical projection $\pi: \mathfrak{h} \longrightarrow V$ via the relation $ \omega(x, y) = \pi([x, y]).$

\begin{pro}\label{pro: A^- central extension}
     Let $\A$ be a nearly associative $L$-algebra. Then $(\A^-,[-,-])$ is the central extension of a Lie algebra $(\mathfrak{g},[-,-]_{\mathfrak{g}})$ by means of a $2$-cocycle $\omega$.
\end{pro}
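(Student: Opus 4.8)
The plan is to exhibit the quotient Lie algebra $\mathfrak{g}$ and the cocycle $\omega$ explicitly, using the structural results already established for $NAL$-algebras. The key observation is Condition (1) of Proposition~\ref{pro:cons of NAL}: for a nearly associative $L$-algebra $\A$ we have $\A\bullet\A\subseteq Z(\A^-)$, and more specifically (by Condition (2)) $[\A,\A]\subseteq\ann(\A^+)$ and (by the Corollary following it) $\A^-$ is $3$-nilpotent. I would set $V:=\A\bullet\A$ viewed as a subspace of the underlying vector space of $\A$, and let $\mathfrak{g}:=\A/V$ equipped with the bracket induced by $[-,-]$. The content to check is that (a) $V$ is an ideal of $\A^-$ so that the quotient bracket is well-defined, (b) the resulting bracket on $\mathfrak{g}$ together with the map $\omega\colon\mathfrak{g}\times\mathfrak{g}\to V$ reconstructs $\A^-$ as a central extension, and (c) $V\subseteq Z(\A^-)$ so the extension is genuinely central.

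\textbf{Key steps.} First I would record that $V=\A\bullet\A\subseteq Z(\A^-)$ by Proposition~\ref{pro:cons of NAL}(1); in particular $[V,\A]=0$, so $V$ is an abelian ideal of $\A^-$ contained in its center, and the quotient $\mathfrak{g}=\A^-/V$ inherits a Lie algebra structure $[-,-]_{\mathfrak{g}}$ via $[\bar x,\bar y]_{\mathfrak{g}}:=\overline{[x,y]}$. Second, choose a vector-space complement $\mathfrak{s}$ of $V$ in $\A$, so $\A=\mathfrak{s}\oplus V$ as vector spaces, and identify $\mathfrak{g}$ with $\mathfrak{s}$ via the projection; transporting the bracket, for $x,y\in\mathfrak{s}$ write $[x,y]=[x,y]_{\mathfrak{g}}+\omega(x,y)$ where $[x,y]_{\mathfrak{g}}\in\mathfrak{s}$ is the $\mathfrak{s}$-component and $\omega(x,y)\in V$ is the $V$-component. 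Third, verify that $[-,-]_{\mathfrak{g}}$ so defined is a Lie bracket on $\mathfrak{g}$ and that $\omega$ is a $2$-cocycle: antisymmetry of $\omega$ is immediate from antisymmetry of $[-,-]$, and the cocycle identity $\sum_{\mathrm{cyc}}\omega([x,y]_{\mathfrak{g}},z)=0$ follows by projecting the Jacobi identity for $\A^-$ onto $V$, using that $[V,\A]=0$ to discard the terms involving $\omega(x,y)\in V$. Finally, conclude that $\A^-$ is exactly the central extension $\mathfrak{g}\oplus V$ with product $[x+v,y+w]=[x,y]_{\mathfrak{g}}+\omega(x,y)$, and that it is central since $V\subseteq Z(\A^-)$.

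\textbf{Main obstacle.} The only genuinely delicate point is making sure the decomposition $[x,y]=[x,y]_{\mathfrak{g}}+\omega(x,y)$ is consistent — i.e.\ that after choosing the splitting $\mathfrak{s}$, the $\mathfrak{s}$-component really does define a Lie bracket (associativity-free, but Jacobi must hold) rather than merely a bilinear antisymmetric map. This is where one uses that $V$ is \emph{central}: the Jacobi identity for $[-,-]_{\mathfrak{g}}$ is obtained by projecting the Jacobi identity of $\A^-$ onto $\mathfrak{s}$, and the cross terms $[\omega(x,y),z]$ vanish because $\omega(x,y)\in V\subseteq Z(\A^-)$, so no obstruction survives. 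Everything else is a routine transcription of the standard equivalence between central extensions and $2$-cocycles, and the required inclusion $\A\bullet\A\subseteq Z(\A^-)$ has already been proved in Proposition~\ref{pro:cons of NAL}. I expect the whole argument to be short once the splitting is fixed.
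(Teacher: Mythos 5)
Your proposal is correct and follows essentially the same route as the paper: both choose a vector-space complement of $\A\bullet\A$ in $\A^-$, use $\A\bullet\A\subseteq Z(\A^-)$ from Proposition~\ref{pro:cons of NAL}(1) to decompose the bracket as $[x,y]=[x,y]_{\mathfrak{g}}+\omega(x,y)$, and obtain the Jacobi identity for $[-,-]_{\mathfrak{g}}$ together with the cocycle identity for $\omega$ by projecting the Jacobi identity of $\A^-$ onto the two summands. The only cosmetic difference is that you pass through the quotient $\A^-/(\A\bullet\A)$ before identifying it with the complement, which the paper only records afterwards as a remark.
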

\begin{proof}
   There exists a vector subspace $\mathfrak{g}\subseteq \A^-$ such that $\A^-=\mathfrak{g}\oplus (\A \bullet \A)$ as vector spaces.  Since by (1) of Proposition \ref{pro:cons of NAL} $\A \bullet \A \subseteq Z(\A^-)$, we may have for all $x,y \in \mathfrak{g}$ and $a,b\in \A \bullet \A$
   $$
   [x+a, y+b]:= [x,y]_{\mathfrak{g}}+ \omega(x,y),
   $$
   where $
    [x,y]_{\mathfrak{g}} \in 
   \mathfrak{g}$ and $
    \omega(x,y) \in  \A \bullet \A 
   $.  
   We claim that $(\mathfrak{g}, [-,-]_{\mathfrak{g}})$ is a Lie algebra and $\omega:\mathfrak{g}\times \mathfrak{g}\longrightarrow \A \bullet \A$ is a $2$-cocycle, and this will complete the proof.
   Since $(\A^-,[-,-])$ is a Lie algebra,
   for $x,y \in\mathfrak{g}$ and $a,b\in \A \bullet \A$
   \begin{align*}
   ¨[x+a,y+b]=-[y+b,x+a] \; \Longrightarrow [x,y]_{\mathfrak{g}}+ \omega(x,y)=-[y,x]_{\mathfrak{g}}- \omega(y,x),
   \end{align*}
   and consequently $[x,y]_{\mathfrak{g}}= -[y,x]_{\mathfrak{g}}$ and $\omega(x,y)=- \omega(y,x)$. Also, for $x,y,z \in \mathfrak{g}$ and $a,b,c\in \A \bullet \A$, from
   \begin{align*}
   ¨ [x+a,[y+b,z+c]]+[y+b,[z+c,x+a]] +[z+c,[x+a,y+b]]=0, 
    \end{align*}
    we get
   \begin{align*}
   [x,[y,z]_{\mathfrak{g}}]_{\mathfrak{g}}+  [y,[z,x]_{\mathfrak{g}}]_{\mathfrak{g}}+  [z,[x,y]_{\mathfrak{g}}]_{\mathfrak{g}}+ \omega(x,[y,z]_{\mathfrak{g}}) + \omega(y,[z,x]_{\mathfrak{g}}) + \omega(z,[x,y]_{\mathfrak{g}})=0.
   \end{align*}
   Thus $(\mathfrak{g}, [-,-]_{\mathfrak{g}})$ satisfies the Jacobi identity and $\omega([x,y]_{\mathfrak{g}},z)+\omega([y,z]_{\mathfrak{g}},x)+\omega([z,x]_{\mathfrak{g}},y)=0$, as claimed.
\end{proof}

\begin{rmk}
     The Lie algebra $\mathfrak{g}$ in the Proposition \ref{pro: A^- central extension} is isomorphic to $\A^-/(\A \bullet \A)$.
\end{rmk}

We conclude this section with a characterization of nearly associative $L$-algebras.

\begin{pro}\label{pro: characterization NAL}
An algebra $\A$ is a nearly associative $L$-algebra if and only if the following four conditions hold:
\begin{enumerate}
\item  $\A^-$ is a Lie algebra;
\item $\A \bullet \A \subseteq Z(\A^-)$;
\item $[\A,\A]\subseteq \ann(\A^+)$;
\item    $ \asso^+(x,y,z) =[y,zx]$, for  all $x,y,z\in \A$. 
\end{enumerate}
\end{pro}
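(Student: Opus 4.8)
The plan is to treat the two implications separately: the ``only if'' direction is a direct reading of results already proved, while the ``if'' direction needs a single short computation on top of them.

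For the ``only if'' direction, assume $\A$ is a nearly associative $L$-algebra. Then $\A$ is nearly associative, so $\A^-$ is a Lie algebra by Proposition~\ref{pro: A^+ Jordan A^- Lie} (equivalently, by Proposition~\ref{very}(1)), which is~(1); conditions~(2) and~(3) are exactly items~(1) and~(2) of Proposition~\ref{pro:cons of NAL}; and~(4) is item~(3) of the same proposition. So nothing new is required here.

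For the ``if'' direction, assume (1)--(4). First I would show that $\A$ is nearly associative by verifying the three conditions of Proposition~\ref{very}. Its condition~(1) is our hypothesis~(1). Its condition~(3), $[x,y]\bullet z+[x\bullet y,z]+[x,z]\bullet y+[x\bullet z,y]=0$, holds termwise: the first and third summands vanish by~(3), since $[x,y],[x,z]\in\ann(\A^+)$, and the second and fourth vanish by~(2), since $x\bullet y,\,x\bullet z\in Z(\A^-)$. For its condition~(2), $\asso^+(x,y,z)=[y,[z,x]]$, note that $z\bullet x=\tfrac12(zx+xz)\in Z(\A^-)$ by~(2), hence $[y,zx]=-[y,xz]$, so that $[y,[z,x]]=\tfrac12\bigl([y,zx]-[y,xz]\bigr)=[y,zx]$, and $[y,zx]=\asso^+(x,y,z)$ by~(4). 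Thus $\A$ is nearly associative.

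It remains to prove that $\A$ is an $L$-algebra, and the crux is the auxiliary identity $(xy)z=z(yx)$ for all $x,y,z\in\A$. Expanding both sides through \eqref{eq:cdot}, using antisymmetry of $[-,-]$ and commutativity of $\bullet$, gives
\[
(xy)z-z(yx)=[xy+yx,z]+\bigl((xy)-(yx)\bigr)\bullet z=2[x\bullet y,z]+2[x,y]\bullet z,
\]
which is zero by~(2) and~(3). Now apply the defining identity \eqref{eq: nearly associative algebra} in the relabelled form $x(yz)=(zx)y$, followed by the identity just obtained in the relabelled form $(zx)y=y(xz)$, to get $x(yz)=y(xz)$, i.e.\ \eqref{condition: L-alg}. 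Hence $\A$ is a nearly associative $L$-algebra. (Once $(xy)z=z(yx)$ is available, one may instead cite the Remark after Lemma~\ref{lem:dual} together with Proposition~\ref{pro:coajoint representation}.) The only step that is not bookkeeping against Propositions~\ref{very} and~\ref{pro:cons of NAL} is the derivation of this auxiliary identity.
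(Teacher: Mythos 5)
Your proof is correct. The forward direction and the reduction of nearly associativity to the three conditions of Proposition~\ref{very} (including the observation that condition~(2) forces $[y,zx]=-[y,xz]$, hence $\asso^+(x,y,z)=[y,[z,x]]$) coincide with the paper's argument. The only genuine divergence is in the final step, where $\A$ must be shown to be an $L$-algebra: the paper expands $x(yz)-y(xz)$ via \eqref{eq:cdot} into $[x,[y,z]]+[y,[z,x]]+\asso^+(y,z,x)$ and concludes by the Jacobi identity, whereas you first extract the quasi-commutativity $(xy)z=z(yx)$ (the identity of Remark~\ref{rmk: NAL quasi-comm}) directly from conditions~(2) and~(3) --- essentially running the computation of Proposition~\ref{pro:cons of NAL}(1)--(2) backwards --- and then compose it with the already-established nearly associative identity to get $x(yz)=(zx)y=y(xz)$. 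Your route has the small advantage of not invoking the Jacobi identity or condition~(4) at this stage, and of isolating \eqref{eq: quasi-comm} as the structural fact behind the $L$-property; the paper's route keeps the entire converse inside the $([-,-],\bullet)$ decomposition. Both arguments are complete and of comparable length.
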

\begin{proof}
    By Propositions~\ref{pro: A^+ Jordan A^- Lie} and \ref{pro:cons of NAL}, it is clear that if $\A$ is a $NAL$-algebra then the four conditions above are satisfied.

Conversely, suppose that $\A$ satisfies these four conditions. First notice that from Condition (2) it follows that $[y,xz]=-[y,zx]$  for all $x,y,z\in \A$, then
\begin{equation*}
    [y,[z,x]]
=\frac{1}{2}\Big( [y,zx]-[y,xz]\Big)=[y,zx].
\end{equation*}
Thus, by Condition (4), we have for $x,y,z\in \A$ 
\begin{equation}\label{eq:1ab}
    \asso^+(x,y,z)=[y,[z,x]], 
\end{equation}
which ensures the second condition of Proposition \ref{very}.
Moreover, from Conditions (2) and (3) we have that  $ [x,y]\bullet z+ [ x\bullet y , z]+ [x,z]\bullet y  +[  x\bullet z ,y]= 0, $ for all $x,y,z\in \A$, that guarantees the third condition of Proposition \ref{very}. 
Also, by Condition (1) $\A^-$ is a Lie algebra. Thus, the three conditions of Proposition \ref{very} are satisfied and, as a consequence, $A$ is a nearly associative algebra.

It remains to prove that $\A$ is also an $L$-algebra. From Conditions (2) and (3) and using \eqref{eq:cdot},
for every $x, y , z \in \A $ we have
\begin{align*}
x (y z)&= x[y, z]+ x (y\bullet z) = [x, [y, z]]+x\bullet [y, z] +[x,  y\bullet z ] + x\bullet (y\bullet z)= [x, [y, z]]+ x\bullet (y\bullet z) ,
\end{align*}
and, by switching $x$ with $y$, we also have $y (x z) = [y, [x, z]]+ y\bullet (x\bullet z)$.
Since $[-,-]$ is anti-commutative and $\bullet$ is commutative, using Condition  \eqref{eq:1ab}, we  have
\begin{align*}
    x (y z)-y (x z)&=[x, [y, z]]+ x\bullet (y\bullet z)- [y, [x, z]]- y\bullet (x\bullet z)=[x, [y, z]]+ [y, [z, x]]+ (y\bullet z)\bullet x - y\bullet (z\bullet x)\\
    &=[x, [y, z]]+ [y, [z, x]]+\asso^+(y,z,x) \overset{\eqref{eq:1ab}}{=}[x, [y, z]]+ [y, [z, x]] + [z,[x,y]].
\end{align*}
 Therefore, since $[-,-]$ is a Lie structure on
 $\A$, we get $x (y z)-y (x z)= 0$ for all $x, y , z \in \A $, i.e., $\A$ is an $L$-algebra. We conclude that $\A$ is an $NAL$-algebra as required.
\end{proof}

\section{Nearly coassociative coalgebras, $L$-coalgebras and $R$-coalgebras}\label{Section: Coalgebras}

This section aims to introduce the notions of nearly coassociative coalgebras, $L$-coalgebras and $R$-coalgebras and present some of their properties. 

Let $(A,\cdot)$ be an algebra and  $ \Delta_A  : A  \longrightarrow  A  \otimes A $  be a linear map called \textit{comultiplication}. Then the pair $(A, \Delta_A)$ is called a \textit{coalgebra}. In the following, we use Sweedler's notation, so we write  $\Delta_A (x)=\sum_{(x)}x_{(1)}\otimes x_{(2)}$,
where $x_{(1)}$ and $ x_{(2)}$ are elements in $A$.

In the following, denote by  $\displaystyle \tau:A\otimes A\longrightarrow A\otimes A$ the linear map (the so-called  \textit{twist map}) given by $\tau(x\otimes y):= y\otimes x$, for  $\displaystyle x,y \in A $, by 
$\displaystyle \xi:A\otimes A\otimes A\longrightarrow A\otimes A\otimes A$ the linear cycle map defined by $\xi(x\otimes y\otimes z):= y\otimes z \otimes x$, for  $\displaystyle x,y,z \in A$, and by $\mbox{I}: A\longrightarrow A$ the identity map on $ A$.

Let us denote by $\displaystyle m: A\otimes A\longrightarrow A$   the bilinear product on $A$, that is,  $m(x\otimes y):= x\cdot y,\,$  for   $\displaystyle x,y \in A$, i.e., $(A,m =\cdot)$. Then the condition in the definition of nearly associative algebra may be written as:
\begin{equation*}
\begin{split}
m  (m\otimes \mbox{I})&=m (\mbox{I}\otimes m) \xi.
\end{split}
\end{equation*}

\begin{defi}\label{def: nearly coassociative coalgebra} Let $\calA$ be a vector space provided with a comultiplication $\displaystyle \Delta_\A:\calA\longrightarrow \calA\otimes \calA $. We say that $\calA$ is a \textit{nearly coassociative coalgebra} if
\begin{equation*}
\begin{split}
(\Delta_\A \otimes \mbox{I}) \Delta_\A &=\xi (\mbox{I} \otimes \Delta_\A) \Delta_\A    .
\end{split}
\end{equation*}
\end{defi}

\noindent
Similarly, the condition of $L$-algebra may be written as:
\begin{equation*}
\begin{split}
m (\mbox{I}\otimes m)  &=m (\mbox{I}\otimes m) (\tau \otimes \mbox{I}),
\end{split}
\end{equation*}
and the condition of $R$-algebra may be written as:
\begin{equation*}
\begin{split}
m (m \otimes\mbox{I})  &=m (m \otimes\mbox{I}) ( \mbox{I} \otimes \tau).
\end{split}
\end{equation*}

\begin{defi}\label{def: LR coalgebras} Let $A$ be a vector space provided with a comultiplication $\displaystyle \Delta_A :A\longrightarrow A\otimes A $. We say that the pair $(A,\Delta_A)$ is an:
\begin{itemize}
    \item \textit{$L$-coalgebra} if
\begin{equation}
\label{eq: $L$-coalgebra}
\begin{split}
 (\mbox{I}\otimes \Delta_A) \Delta_A &=(\tau \otimes \mbox{I})( \mbox{I} \otimes \Delta_A)\Delta_A.
\end{split}
\end{equation}
\item \textit{$R$-coalgebra} if
\begin{equation}
\label{eq: $R$-coalgebra}
\begin{split}
 (\Delta_A \otimes\mbox{I}) \Delta_A &=(\mbox{I}\otimes \tau)( \Delta_A \otimes\mbox{I})\Delta_A.
\end{split}
\end{equation}
\item  \textit{$LR$-coalgebra} if it is an $L$-coalgebra and an $R$-coalgebra simultaneously, i.e., Conditions \eqref{eq: $L$-coalgebra} and \eqref{eq: $R$-coalgebra} are both satisfied. 
\end{itemize}
\end{defi}

Let $(A, \cdot )$ be a non-associative algebra, and assume a non-associative algebra structure in the dual vector space $(A^*,\star)$. In the following,  we denote $f(x)$ by either $\langle
 f,x\rangle\,$ or $\langle x,f\rangle$, for $x\in
 A$ and $f\in A^\ast$. Moreover, we identify the finite dimensional algebra $ A $ and its bi-dual space $ A^{**}=( A^ *)^*  $ by the isomorphism of vector space 
 $A \longrightarrow ( A^ *)^*$ given by $ x \mapsto \langle x,-\rangle.$
 
There is a natural way to define a coalgebra structure on $A$ from the algebra structure of its dual $A^*$.
From the multiplication 
$\displaystyle  \star   : A^\ast \otimes A^\ast   \longrightarrow A^\ast  $ on $\displaystyle   A^\ast $ we infer a comultiplication $\displaystyle  \Delta_A := \ts   : A   \longrightarrow A \otimes A $  on $A$ (using the identification between $ A $ and the bi-dual $ A^{**}=( A^ *)^* $)  
given by the following relation:
\begin{equation}\label{eq: product A^*=coproduct A}
\begin{split}
\langle \Delta_A  (x ), f \otimes g\rangle &= \langle x , f \star g \rangle , 
\end{split}
\end{equation}
for  $ x  \in A$ and $ f,g \in A^\ast$.  On the other way around, we may also endow the dual of a coalgebra with an algebra structure. Given a coalgebra  $(A, \Delta_A)$, then $(A^\ast, \star)$ is an algebra where the multiplication $\star: A^\ast \otimes A^\ast \longrightarrow A^\ast$ comes from \eqref{eq: product A^*=coproduct A}.


Similarly, we can also consider a coalgebra structure on the dual $A^*$ from the algebra structure of $A$,  by taking into account the 
comultiplication $\displaystyle  \Delta_{A^\ast}   : A^\ast   \longrightarrow A^\ast \otimes A^\ast $  on $A^\ast$ given by 
\begin{equation*}
\begin{split}
\langle \Delta_{A^\ast}  (f ), x \otimes y\rangle &:= \langle f ,x \cdot y \rangle , 
\end{split}
\end{equation*}
for $ x,y  \in A$ and $ f \in A^\ast$.

Actually,  we can establish the next result.

\begin{pro} \label{pro:  nearly coasso coalg and L-coalg}
Let $(A, \Delta_A)$ be a coalgebra and $(A^*, \star)$ be an algebra, where 
$\Delta_A $ and $  \star$ are connected by the relation \eqref{eq: product A^*=coproduct A}. Then we have the following assertions.
\begin{enumerate}
    \item $(A, \Delta_A)$ is a nearly coassociative coalgebra if and only if $(A^*, \star)$ is a nearly associative algebra.

    \item  $(A, \Delta_A)$ is an $L$-coalgebra if and only if $(A^*, \star)$ is an $L$-algebra.

    \item  $(A, \Delta_A)$ is an $R$-coalgebra if and only if $(A^*, \star)$ is an $R$-algebra.
\end{enumerate}
\end{pro}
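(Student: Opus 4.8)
The plan is to prove all three equivalences by the same dualization technique: translate each coalgebra identity into a statement about iterated comultiplications, pair it against three dual functionals, and use the defining relation \eqref{eq: product A^*=coproduct A} to convert everything into the corresponding algebra identity on $(A^\ast,\star)$. Since the manipulations for parts (2) and (3) run in close parallel to part (1), I would write part (1) in full and then indicate the (routine) modifications needed for the other two.

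For part (1), I would start from the observation that the defining relation \eqref{eq: product A^*=coproduct A} iterates: for $x\in A$ and $f,g,h\in A^\ast$ one has
\begin{equation*}
\langle (\Delta_A\otimes \mathrm{I})\Delta_A(x),\, f\otimes g\otimes h\rangle = \langle x,\, (f\star g)\star h\rangle,
\end{equation*}
which follows by applying \eqref{eq: product A^*=coproduct A} twice (first to the outer $\Delta_A$ with the functionals $f\star g$ and $h$, then to the inner $\Delta_A$ with $f$ and $g$), together with Sweedler's notation. Similarly,
\begin{equation*}
\langle (\mathrm{I}\otimes \Delta_A)\Delta_A(x),\, f\otimes g\otimes h\rangle = \langle x,\, f\star(g\star h)\rangle.
\end{equation*}
Next I would compute the effect of the cycle map $\xi$ on the second expression: since $\xi(x\otimes y\otimes z)=y\otimes z\otimes x$, its transpose sends $f\otimes g\otimes h$ to $h\otimes f\otimes g$, so
\begin{equation*}
\langle \xi(\mathrm{I}\otimes \Delta_A)\Delta_A(x),\, f\otimes g\otimes h\rangle = \langle (\mathrm{I}\otimes \Delta_A)\Delta_A(x),\, h\otimes f\otimes g\rangle = \langle x,\, h\star(f\star g)\rangle.
\end{equation*}
Now $(A,\Delta_A)$ is a nearly coassociative coalgebra iff $(\Delta_A\otimes\mathrm{I})\Delta_A=\xi(\mathrm{I}\otimes\Delta_A)\Delta_A$, which — since elementary tensors $f\otimes g\otimes h$ span $A^\ast\otimes A^\ast\otimes A^\ast$ and the pairing $A^{\otimes 3}\times (A^\ast)^{\otimes 3}\to\mathbb K$ is non-degenerate — is equivalent to $(f\star g)\star h = h\star(f\star g)$ for all $f,g,h\in A^\ast$. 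Writing $u=f$, $v=g$, $w=h$ and relabeling, this says $(uv)w = w(vu)$ in $(A^\ast,\star)$; but that is precisely the nearly associativity identity \eqref{eq: nearly associative algebra} with the roles arranged as $(uv)w$ on the left and $w(vu)$ on the right, and one checks this is the same identity as $(xy)z=y(zx)$ after a permutation of variables. This gives the equivalence in (1).

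For parts (2) and (3) I would proceed identically, now pairing the two-fold iterated comultiplications against $f\otimes g\otimes h$ and tracking the transpose of $\tau\otimes\mathrm{I}$ (which swaps the first two tensor slots, hence swaps $f$ and $g$) and of $\mathrm{I}\otimes\tau$ (which swaps the last two slots). For the $L$-coalgebra identity \eqref{eq: $L$-coalgebra}, this yields $f\star(g\star h) = g\star(f\star h)$ for all $f,g,h$, which is exactly the $L$-algebra condition \eqref{condition: L-alg} on $(A^\ast,\star)$; for the $R$-coalgebra identity \eqref{eq: $R$-coalgebra} it yields $(f\star g)\star h = (f\star h)\star g$, which is the $R$-algebra condition \eqref{condition: R-alg}. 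The main obstacle — really the only point requiring care — is bookkeeping: getting the composition of \eqref{eq: product A^*=coproduct A} with itself right in Sweedler notation, and correctly computing the transposes of $\xi$, $\tau\otimes\mathrm{I}$, and $\mathrm{I}\otimes\tau$ acting on the dual side (it is easy to transpose in the wrong direction). I would state explicitly the iteration lemma for \eqref{eq: product A^*=coproduct A} and the transpose formulas once at the start, so that the three parts then follow by direct substitution without repetition. The non-degeneracy of the evaluation pairing on finite-dimensional spaces (used throughout the paper) is what lets us pass from "equal after pairing with all elementary tensors" to "equal as maps".
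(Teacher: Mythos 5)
Your setup is right and parts (2) and (3) go through: the transposes of $\tau\otimes\mathrm{I}$ and $\mathrm{I}\otimes\tau$ are themselves, and the identities $f\star(g\star h)=g\star(f\star h)$ and $(f\star g)\star h=(f\star h)\star g$ you obtain are exactly \eqref{condition: L-alg} and \eqref{condition: R-alg} for $(A^\ast,\star)$. The gap is in the last step of part (1). Your pairing computation is correct and shows that the condition $(\Delta_A\otimes\mathrm{I})\Delta_A=\xi(\mathrm{I}\otimes\Delta_A)\Delta_A$ is equivalent to $(f\star g)\star h=h\star(f\star g)$ for all $f,g,h$. You then transcribe this as $(uv)w=w(vu)$ — it is in fact $(uv)w=w(uv)$ — and assert that it coincides with $(xy)z=y(zx)$ after permuting variables. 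Neither version does: the six permuted instances of $(xy)z=y(zx)$ all have the form $(ab)c=b(ca)$, whereas your identity has the form $(ab)c=c(ab)$, and the two generate different varieties. Concretely, the nearly associative algebra of Example~\ref{exa:NA not LR} satisfies $(e_1e_2)e_2=e_3e_2=2e_6$ but $e_2(e_1e_2)=e_2e_3=e_6$, so a nearly associative algebra need not satisfy $(xy)z=z(xy)$; taking $A^\ast$ to be that algebra, $(A^\ast,\star)$ is nearly associative while $(A,\Delta_A)$ fails the displayed condition. So the equivalence claimed at the end of your part (1) is false as you have derived it, and the phrase ``one checks this is the same identity after a permutation of variables'' conceals a genuine error rather than a routine verification.

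The source of the mismatch is that the transpose of the cyclic shift $\xi$ is $\xi^{-1}$, not $\xi$: dualizing $m(m\otimes\mathrm{I})=m(\mathrm{I}\otimes m)\xi$ on $A^\ast$ faithfully gives $(\Delta_A\otimes\mathrm{I})\Delta_A=\xi^{-1}(\mathrm{I}\otimes\Delta_A)\Delta_A$, equivalently $\xi(\Delta_A\otimes\mathrm{I})\Delta_A=(\mathrm{I}\otimes\Delta_A)\Delta_A$. With that reading of Definition~\ref{def: nearly coassociative coalgebra}, your pairing argument yields $(f\star g)\star h=g\star(h\star f)$, which is precisely \eqref{eq: nearly associative algebra} for $(A^\ast,\star)$, and part (1) closes. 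You need either to adopt that convention explicitly (it is evidently the intended one, since otherwise the proposition is false for the algebra of Example~\ref{exa:NA not LR}) or to flag the discrepancy; as written, the proof of (1) does not establish the stated equivalence.
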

\begin{proof}
    It follows from straightforward calculations.
\end{proof}

As a consequence of Lemma \ref{lem:R-L} and Proposition \ref{pro:  nearly coasso coalg and L-coalg} we have the following.

\begin{cor}\label{cor: NCAcoal L-coal and R-coal}
    Let $(\A, \Delta_\A)$ be a nearly coassociative coalgebra. Then $(\A, \Delta_\A)$ is an $L$-coalgebra if and only if it is an $R$-coalgebra.
\end{cor}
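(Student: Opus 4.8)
The plan is to combine the two tools that are already on the table: Lemma~\ref{lem:R-L}, which says that for a \emph{nearly associative} algebra the $R$-identity and the $L$-identity are equivalent, and Proposition~\ref{pro:  nearly coasso coalg and L-coalg}, which translates each coalgebra condition on $(\A,\Delta_\A)$ into the corresponding algebra condition on the dual $(\A^*,\star)$ via the pairing \eqref{eq: product A^*=coproduct A}. So the argument is essentially a dualization of Lemma~\ref{lem:R-L}.

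First I would fix the algebra $(\A^*,\star)$ determined by $\Delta_\A$ through \eqref{eq: product A^*=coproduct A}. By part~(1) of Proposition~\ref{pro:  nearly coasso coalg and L-coalg}, the hypothesis that $(\A,\Delta_\A)$ is a nearly coassociative coalgebra is equivalent to $(\A^*,\star)$ being a nearly associative algebra. Now suppose $(\A,\Delta_\A)$ is an $L$-coalgebra; by part~(2) of the same proposition this is equivalent to $(\A^*,\star)$ being an $L$-algebra. Since $(\A^*,\star)$ is nearly associative, Lemma~\ref{lem:R-L} applies and tells us $(\A^*,\star)$ is then also an $R$-algebra. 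Applying part~(3) of Proposition~\ref{pro:  nearly coasso coalg and L-coalg} in the reverse direction, $(\A,\Delta_\A)$ is an $R$-coalgebra. The converse is entirely symmetric: if $(\A,\Delta_\A)$ is an $R$-coalgebra then $(\A^*,\star)$ is an $R$-algebra, hence (being nearly associative) an $L$-algebra by Lemma~\ref{lem:R-L}, hence $(\A,\Delta_\A)$ is an $L$-coalgebra.

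I do not anticipate a genuine obstacle here — the statement is a formal corollary. The only point requiring a line of care is making sure the chain of equivalences is applied in the correct direction each time: Proposition~\ref{pro:  nearly coasso coalg and L-coalg} is stated as an ``if and only if'' in each item, so passing from coalgebra side to algebra side and back is legitimate, and the nearly coassociative hypothesis is exactly what guarantees that the intermediate algebra $(\A^*,\star)$ is nearly associative, which is the precise input Lemma~\ref{lem:R-L} demands. Thus the proof reduces to citing these two results, and one could reasonably write it in one or two sentences.
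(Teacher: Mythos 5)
Your argument is exactly the paper's: the corollary is stated there as an immediate consequence of Lemma~\ref{lem:R-L} and Proposition~\ref{pro:  nearly coasso coalg and L-coalg}, i.e., one dualizes to $(\A^*,\star)$, applies the $R$/$L$ equivalence for nearly associative algebras, and dualizes back. The proposal is correct and complete.
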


\begin{defi}
    Let $\A$ be a vector space provided with a comultiplication $\displaystyle \Delta_{\A} : \A\longrightarrow \A\otimes \A $. We say that $\A$ is a \textit{nearly coassociative $L$-coalgebra} if it is a nearly coassociative coalgebra and an $L$-coalgebra simultaneously, i.e., the following conditions are both satisfied
\begin{align*}
(\Delta_{\A} \otimes \mbox{I}) \Delta_{\A} &=\xi (\mbox{I} \otimes \Delta_{\A}) \Delta_{\A},\\
 (\mbox{I}\otimes \Delta_{\A}) \Delta_{\A} &=(\tau \otimes \mbox{I})( \mbox{I} \otimes \Delta_{\A})\Delta_{\A}.
\end{align*}
\end{defi}

\begin{rmk}
If $(\A, \Delta_\A)$ is a nearly coassociative $L$-coalgebra, then as a consequence of Corollary \ref{cor: NCAcoal L-coal and R-coal} it follows that $\A$ is also an $R$-coalgebra.
\end{rmk}

\begin{exa}\label{ex: nearly coassociative L-coalgebra asso}
    Let $ \A$ be a vector space of dimension four with basis $\{e_1,e_2,e_3,e_4 \} $ and consider the comultiplication $\Delta_\A:\A \longrightarrow \A \otimes \A$ such that the only non-zero values in the basis are
\begin{equation*}
\Delta_\A( e_1):= 2 (e_4 \otimes e_4), \quad
\Delta_\A(e_2):= e_4 \otimes e_3- e_3 \otimes e_4 + 2 (e_4 \otimes e_4).
\end{equation*} 
Then    $ (\A, \Delta_\A)$ is a nearly coassociative $L$-coalgebra.
\end{exa}

\begin{exa}\label{ex: nearly coassociative L-coalgebra not asso}
    Let $ \A$ be a vector space of dimension six with basis $\{e_1,e_2,e_3,e_4,e_5,e_6 \} $ and consider the comultiplication $\Delta_\A:\A \longrightarrow \A \otimes \A$ such that the only non-zero value in the basis is 
\begin{equation*}
\Delta_\A(e_1):=e_6 \otimes e_6.
\end{equation*} 
     Then  $ (\A, \Delta_\A)$ is a nearly coassociative $L$-coalgebra.
\end{exa}

\section{Nearly associative bialgebras}\label{Section: NA bialgebras}

This section aims to introduce the notion of nearly associative bialgebra and give a characterization. Moreover, we show that it is equivalent to the concept of $R$-bialgebra, $L$-bialgebra, and then $LR$-bialgebra.

Drinfeld introduced the notion of Lie bialgebra in \cite{Dri1983} to study solutions to the classical Yang-Baxter equation.
After that, Zhelyabin generalized this construction to an arbitrary variety \cite{Zhe1997}. In the following, we use this construction to define the class of nearly associative bialgebras.

Let $(A, \cdot )$ be a non-associative algebra, and assume a non-associative algebra structure in the dual vector space $(A^*,\star)$.
On the vector space $\rmD(A):=A\oplus A^*$, we consider the following product:
\begin{equation}
(x+f)\diamond (y+g) :=
\underbrace{x \cdot y+\rmL_f ^*(y)+\rmR_g^*(x)}_{\in \;  A }+
\underbrace{ f\star g +\rmL_x^*(g)+\rmR_y^*(f)}_{  \in \; A^*}, 
\end{equation}
for  $  x,y\in  A ,f,g\in  A ^*$.
Remark that the elements $\rmL_f ^*(y)$ and $\rmR_g^*(x)$  belong to $A$, using the identification between $ A $ and $ ( A^ *)^*  $. Indeed, for $h \in  A^*    $ we have
$$\rmL_f ^*(y)(h)=\langle y, \rmR_f(h)\rangle=\langle y, h\star f\rangle \; \mbox{ and } \; \rmR_g^*(x)(h)=\langle x, \rmL_g(h)\rangle=\langle x, g \star h\rangle.$$
Thus $(\rmD( A ),\diamond)$ is an algebra, and both $(A, \cdot )$ and $({A}^*,\star)$ are its subalgebras.

Now, let us consider 
$B: \rmD(A) \times \rmD(A) \longrightarrow  \mathbb{K}$ given by 
\begin{equation}\label{eq: Bilinear form on the double}
 B (x+f,y+g):=\langle f,y\rangle+ \langle g, x\rangle,
\end{equation}
for $  x,y\in  A ,f,g\in  A ^*$.
It is easy to see that $B$ is a symmetric  non-degenerate  bilinear form such that 
\begin{equation*}
 B \Big((x+f)\diamond (y+g), z+l\Big)=  B \Big( x+f , (y+g)\diamond ( z+l)\Big),
\end{equation*}
whenever $  x,y,z \in  A , f,g,l\in  A ^*$. Hence $(\rmD( A ),\diamond, B)$ is a quadratic algebra.

Now, we recall the definition of bialgebras for an arbitrary variety presented by Zhelyabin in \cite{Zhe1997}.
\begin{defi} \label{def:bialgebras}
Suppose that $M$ is a variety of algebras, and $(A, \cdot )$ and $(A^*,\star)$ are algebras in $M$, where $ A^* $ is the dual vector space of $ A $. Then $(A, \cdot , \Delta_A  )$, where $\Delta_A=\ts$,  is called an \emph{$M$-bialgebra}  (or a \emph{bialgebra of type $M$})  
if the algebra $(\rmD(A), \diamond )$ belongs to
$M$.
\end{defi}

In the context of quadratic algebras, by Theorem \ref{Teo characterization quad NA algebras}, the notions of nearly associative algebra and $LR$-algebra are equivalent. Thus, since $(\rmD( A ),\diamond, B)$, where $B$ is the the bilinear form defined by \eqref{eq: Bilinear form on the double}, is a quadratic algebra such that  $(A, \cdot )$ and $(A^*,\star)$ are both subalgebras of $(\rmD( A ),\diamond)$, the notion of nearly associative bialgebra only makes sense if $(A, \cdot )$ and $(A^*,\star)$ are both nearly associative algebras and $LR$-algebras. 
Moreover, by Lemma \ref{lem:R-L}, the notions of nearly associative $L$-algebra and nearly associative $R$-algebra are equivalent. Then the notion of nearly associative bialgebra makes sense if $(A, \cdot )$ and $(A^*,\star)$ are nearly associative $L$-algebras ($NAL$-algebras), or equivalently, nearly associative $R$-algebras.
Let us write the definition of bialgebra that we will use specifically for the case of nearly associative algebras. 
  
 \begin{defi} \label{def: nearly associative bialgebras}
Let $(\A, \cdot )$ and $(\A^*,\star)$ be nearly associative $L$-algebras, where $ \A^* $ is the dual vector space of $ \A $. Then $(\A, \cdot , \Delta_\A  )$, where $\Delta_\A=\ts$,  is called \emph{nearly associative bialgebra}
if $(\rmD(\A), \diamond )$ is a nearly associative algebra.
\end{defi}

\begin{rmk} \label{rmk: nearly asso bial, $L$-bialg and $R$-bialg}
   Since $(\rmD( \A ),\diamond, B)$ is a quadratic algebra, where the bilinear form $B$ is defined by \eqref{eq: Bilinear form on the double}, by Theorem \ref{Teo characterization quad NA algebras}  $(\rmD(\A), \diamond )$ is a nearly associative algebra if and only if  $(\rmD(\A), \diamond )$ is an $L$-algebra if and only if $(\rmD(\A), \diamond )$ is an $R$-algebra. So, the concepts of the nearly associative bialgebra, $R$-bialgebra, $L$-bialgebra, and then $LR$-bialgebra are all equivalent.
\end{rmk}

\begin{teo} \label{teo: characterization nearly associative bialg}
    Let $(\A, \cdot)$ and$ (\A^*, \star)$ be two nearly associative $L$-algebras (where $ \A^* $ is the dual vector space of $ \A $).  Then $(\A, \cdot , \Delta_\A  )$, where $\Delta_\A=\ts$,  is a nearly associative bialgebra if and only if the following conditions hold: for all $x,y\in \A$
    \begin{align} \label{eq: 1 nearly asso bialgebra}
        \Delta_{\A} (x \cdot y) = \; (\mbox{I} \otimes \rmR_y) \Delta_\A(x)+ \tau(\mbox{I} \otimes \rmR_x)\Delta_\A (y) =\; \tau(\rmL_y \otimes \mbox{ I}) \Delta_\A (x) + (\rmL_x \otimes \mbox{ I}) \Delta_\A (y),
    \end{align}
    \begin{align} \label{eq: 2 nearly asso bialgebra}
        ( &\rmR_y \otimes \mbox{ I}) \Delta_\A(x) + (\mbox{I} \otimes \rmL_x) \Delta_\A(y)= \; ( \mbox{I} \otimes \rmL_y) \Delta_\A(x)+ (\rmR_x \otimes \mbox{ I}) \Delta_\A (y) = \; \tau ( \mbox{I} \otimes \rmL_y) \Delta_\A(x) +\tau (\rmR_x \otimes \mbox{ I}) \Delta_\A (y).
    \end{align}
\end{teo}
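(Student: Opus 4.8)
The plan is to translate the defining identity of a nearly associative algebra on the double $(\rmD(\A), \diamond)$ into conditions on $\Delta_\A$, exploiting the fact that $(\rmD(\A),\diamond,B)$ is quadratic and $(\A,\cdot)$, $(\A^*,\star)$ are already $NAL$-algebras. Concretely, I would take arbitrary elements $x+f, y+g, z+h \in \rmD(\A)$ and impose
\[
\big((x+f)\diamond(y+g)\big)\diamond(z+h) = (y+g)\diamond\big((z+h)\diamond(x+f)\big).
\]
Expanding both sides using the definition of $\diamond$, the products in $\A$ and $\A^*$, and the dual-module operators $\rmL^*,\rmR^*$, one obtains an identity in $\rmD(\A) = \A \oplus \A^*$; projecting onto the $\A$-component and onto the $\A^*$-component gives two families of identities. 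Since $(\A,\cdot)$ and $(\A^*,\star)$ are themselves nearly associative, the ``pure'' terms (those involving only $x,y,z$ or only $f,g,h$) cancel automatically; what remains are the genuinely mixed terms, linear in one of $f,g,h$ and quadratic in $x,y,z$ (and symmetrically, linear in one of $x,y,z$ and quadratic in $f,g,h$).

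The next step is to rewrite those mixed terms via the coproduct. Using the defining relation $\langle \Delta_\A(x), f\otimes g\rangle = \langle x, f\star g\rangle$ together with the identities $\rmL_f^*(y)(h) = \langle y, h\star f\rangle$ and $\rmR_g^*(x)(h)=\langle x, g\star h\rangle$ recorded in the excerpt, each mixed term can be paired against an arbitrary element of $\A^*\otimes\A^*$ (or $\A^*$) and thereby expressed as one of the tensors $(\mbox{I}\otimes\rmR_y)\Delta_\A(x)$, $\tau(\rmL_y\otimes\mbox{I})\Delta_\A(x)$, $(\rmR_y\otimes\mbox{I})\Delta_\A(x)$, etc.\ appearing in \eqref{eq: 1 nearly asso bialgebra} and \eqref{eq: 2 nearly asso bialgebra}. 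Here I would also use the $NAL$-consequences from Proposition~\ref{pro:LR cond NAL} — in particular $\rmL_x\rmL_y=\rmR_y\rmR_x$, $\rmL_y\rmR_x = \rmL_{xy} = \rmR_{yx}=\rmR_y\rmL_x$ — and their dual analogues on $\A^*$, which collapse several a priori different operator compositions into the four tensors displayed, so that the collection of mixed identities reduces exactly to \eqref{eq: 1 nearly asso bialgebra}–\eqref{eq: 2 nearly asso bialgebra}. The two equalities within \eqref{eq: 1 nearly asso bialgebra} (resp.\ the three expressions in \eqref{eq: 2 nearly asso bialgebra}) should come from collecting terms of the different ``types'' — e.g.\ terms linear in $g$ versus terms linear in $h$ — and using the $NAL$ relations to identify them.

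I expect the main obstacle to be purely organizational: correctly tracking the roughly dozen mixed terms produced by the two triple products, keeping straight which variable each term is linear in, and making sure the dualization (transpose) is applied with the right argument order, since $\langle f,x\rangle$ and $\langle x,f\rangle$ are used interchangeably and the twist $\tau$ enters precisely to absorb a swap of tensor factors. A clean way to manage this is to fix which slot carries the ``new'' dual variable, pair the whole identity with $f\otimes g \otimes h$ ranging over $\A^*\otimes\A^*\otimes\A^*$, and read off coefficients. For the converse direction, I would run the same computation in reverse: assuming \eqref{eq: 1 nearly asso bialgebra}–\eqref{eq: 2 nearly asso bialgebra} hold, reassemble the mixed terms, add back the pure terms (which vanish by hypothesis that $\A$ and $\A^*$ are $NAL$-algebras), and conclude that $\diamond$ satisfies $(u\diamond v)\diamond w = v\diamond(w\diamond u)$ on $\rmD(\A)$, i.e.\ that $(\rmD(\A),\diamond)$ is nearly associative, which by Definition~\ref{def: nearly associative bialgebras} is exactly the assertion that $(\A,\cdot,\Delta_\A)$ is a nearly associative bialgebra.
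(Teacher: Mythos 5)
Your plan is sound and will work, but it bypasses the one structural shortcut on which the paper's proof actually rests. Rather than expanding the nearly associative identity $(u\diamond v)\diamond w = v\diamond(w\diamond u)$ on the double directly, the paper first observes (Remark \ref{rmk: nearly asso bial, $L$-bialg and $R$-bialg}) that $(\rmD(\A),\diamond,B)$ is quadratic, so by Theorem \ref{Teo characterization quad NA algebras} and Proposition \ref{pro:LRN} the double is nearly associative if and only if it satisfies the right-commutativity identity $(u\diamond v)\diamond w=(u\diamond w)\diamond v$. Because that identity is symmetric in its last two arguments, only four mixed cases survive, namely $(x\diamond y)\diamond f=(x\diamond f)\diamond y$, $(f\diamond x)\diamond y=(f\diamond y)\diamond x$, $(f\diamond g)\diamond x=(f\diamond x)\diamond g$, and $(x\diamond f)\diamond g=(x\diamond g)\diamond f$; projecting each onto its $\A$- and $\A^*$-components, one projection reproduces the hypothesis that $\A$ (resp.\ $\A^*$) is nearly associative or an $L$-algebra, and the other gives exactly one of the four displayed equalities of \eqref{eq: 1 nearly asso bialgebra}--\eqref{eq: 2 nearly asso bialgebra}. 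Your direct expansion of the nearly associative identity instead produces six mixed cases (the identity is not symmetric in any pair of slots), and the resulting raw conditions will not match the displayed equalities term by term; you would have to combine several of them, using the $NAL$ relations of Proposition \ref{pro:LR cond NAL} on both $\A$ and $\A^*$, to recover the stated form, and the cleanest way to see that this reduction succeeds is precisely the quadraticity of $B$ that you invoke only in passing. So the pairing-with-$f\otimes g\otimes h$ mechanics of your proposal coincide with the paper's, the strategy is correct, and the converse direction is handled the same way in both arguments; but routing the computation through the $R$-identity at the outset is what makes the four equalities fall out with essentially no extra massaging, and I would recommend restructuring your argument around that observation before attempting the expansion.
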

\begin{proof}
Let $x,y\in \A$ and $f,g\in \A^*$. We claim that
 \begin{align}
        (x \diamond y) \diamond f= (x\diamond f) \diamond y \; &\Longleftrightarrow\;  \Delta_\A (x \cdot y) = (\mbox{I} \otimes \rmR_y) \Delta_\A(x)+ \tau(\mbox{I} \otimes \rmR_x)\Delta_\A (y), \label{eq: equivalence 1}\\
        (f \diamond x) \diamond y= (f\diamond y) \diamond x \; &\Longleftrightarrow\;  ( \rmR_y \otimes \mbox{ I}) \Delta_\A(x) + (\mbox{I} \otimes \rmL_x) \Delta_\A(y)= ( \mbox{I} \otimes \rmL_y) \Delta_\A(x)+ (\rmR_x \otimes \mbox{ I}) \Delta_\A (y), \label{eq: equivalence 2}\\
        (f \diamond g) \diamond x= (f\diamond x) \diamond g \; &\Longleftrightarrow\; \Delta_\A (x \cdot y) = \tau(\rmL_y \otimes \mbox{ I}) \Delta_\A (x) + (\rmL_x \otimes \mbox{ I}) \Delta_\A (y),\\
        (x \diamond f) \diamond g= (x\diamond g) \diamond f \; &\Longleftrightarrow\; ( \mbox{I} \otimes \rmL_y) \Delta_\A(x)+ (\rmR_x \otimes \mbox{ I}) \Delta_\A (y) =  \tau ( \mbox{I} \otimes \rmL_y) \Delta_\A(x) + \tau(\rmR_x \otimes \mbox{ I}) \Delta_\A (y).
    \end{align}
    This will prove that $(\rmD(\A), \diamond )$ is an $R$-algebra if and only if Conditions \eqref{eq: 1 nearly asso bialgebra} and \eqref{eq: 2 nearly asso bialgebra} are satisfied, and by Definition \ref{def: nearly associative bialgebras} and Remark \ref{rmk: nearly asso bial, $L$-bialg and $R$-bialg} the proof is complete.

    First, we show the Equivalence \eqref{eq: equivalence 1}. 
    For all $x,y\in \A$ and $f\in \A^*$
    \begin{align*}
         (x \diamond y) \diamond f= (x\diamond f) \diamond y &\;\Longleftrightarrow\;    \rmR^*_f (x\cdot y) + \rmL^*_{x\cdot y}(f)= \rmR^*_f(x) \cdot y + \rmL^*_{L^*_x (f)}(y) +\rmR^*_y(\rmL^*_x(f)) \\
         &\;\Longleftrightarrow\; \begin{cases}
             \rmR^*_f (x\cdot y) =\rmR^*_f(x) \cdot y + \rmL^*_{L^*_x (f)}(y) \in \A \\
             \rmL^*_{x\cdot y}(f)=\rmR^*_y(\rmL^*_x(f))\in \A^*
         \end{cases}.
    \end{align*}
    Now, for all $z\in \A$
    \begin{align*}
        &\langle\rmL^*_{x\cdot y}(f), z\rangle=\langle\rmR^*_y(\rmL^*_x(f)), z \rangle \Longleftrightarrow \langle\rmR_{x\cdot y}(z), f\rangle=\langle \rmL_y(z), \rmL^*_x(f)\rangle
        \Longleftrightarrow \langle\rmR_{x\cdot y}(z), f\rangle=\langle \rmR_x(\rmL_y(z)),  f \rangle
        \\
        &\Longleftrightarrow \langle z\cdot (x\cdot y), f\rangle=\langle (y\cdot z)\cdot x,f\rangle  \Longleftrightarrow z\cdot (x\cdot y)=(y\cdot z)\cdot x,
    \end{align*}
    i.e., $(\A, \cdot)$ is a nearly associative algebra. 
 Moreover, for all $h\in \A^*$
 \begin{equation}\label{eq: prova}
 \langle\rmR^*_f (x\cdot y),h \rangle=\langle\rmR^*_f(x) \cdot y,h \rangle + \langle\rmL^*_{L^*_x (f)}(y), h \rangle.
\end{equation}
The left-hand side of \eqref{eq: prova}  is $\langle\rmR^*_f (x\cdot y),h \rangle=
\langle x\cdot y, f \star h\rangle=
\langle\Delta_\A(x\cdot y), f \otimes h\rangle$. On the right-hand, we get
\begin{align*}
        \langle\rmR^*_f(x) \cdot y,h \rangle + \langle\rmL^*_{L^*_x (f)}(y), h \rangle &= \langle \rmR^*_f(x) \cdot y, h\rangle + \langle \rmL^*_{f\rmR_x}(y), h\rangle= \langle \rmR^*_f(x) \cdot y, h\rangle + \langle y, h\star f\rmR_x\rangle\\
        &= \langle \Delta_{\A^*} (h), \rmR^*_f(x) \otimes y\rangle + \langle \Delta_\A(y), h \otimes f\rmR_x\rangle\\
        &= \langle \sum_{(h)} h_{(1)} \otimes  h_{(2)}, \rmR^*_f(x) \otimes y\rangle + \langle \sum_{(y)} y_{(1)} \otimes  y_{(2)}, h \otimes f\rmR_x\rangle\\
        &=\sum_{(h)} \langle x, f \star h_{(1)}\rangle \,  \langle y, h_{(2)}\rangle+ \sum_{(y)} \langle y_{(1)}, h\rangle \, \langle y_{(2)}\cdot x, f\rangle\\
        &=\sum_{(h)} \langle \Delta_\A(x), f \otimes h_{(1)}\rangle \, \langle y, h_{(2)}\rangle+ \sum_{(y)} \langle \tau(y_{(1)} \otimes  y_{(2)}\cdot x), f\otimes h\rangle\\
           &= \sum_{(x)} \langle x_{(1)}, f \rangle \, \langle x_{(2)} \otimes y ,\sum_{(h)}h_{(1)}\otimes h_{(2)}\rangle+ \sum_{(y)} \langle \tau(y_{(1)} \otimes  y_{(2)}\cdot x), f\otimes h\rangle\\
            &= \sum_{(x)} \langle x_{(1)}, f \rangle \, \langle x_{(2)} \otimes y ,\Delta_{\A^*} (h)\rangle+ \sum_{(y)} \langle \tau(y_{(1)} \otimes  y_{(2)}\cdot x), f\otimes h\rangle\\
            &=\sum_{(x)} \langle x_{(1)}, f \rangle \, \langle x_{(2)} \cdot y, h\rangle+ \sum_{(y)} \langle \tau(y_{(1)} \otimes y_{(2)}\cdot x), f\otimes h\rangle\\
             &=\langle\sum_{(x)} x_{(1)} \otimes x_{(2)} \cdot y, f \otimes h\rangle+ \langle\sum_{(y)} \tau(y_{(1)} \otimes y_{(2)}\cdot x), f\otimes h\rangle.
    \end{align*}
   Therefore, Equation  \eqref{eq: prova} is equivalent to 
  \begin{align*}
               & \Delta_\A(x\cdot y)=\sum_{(x)}  x_{(1)} \otimes x_{(2)} \cdot y+ \tau\big(\sum_{(y)} y_{(1)} \otimes y_{(2)}\cdot x\big)\Longleftrightarrow \Delta_\A (x \cdot y) = (\mbox{I} \otimes \rmR_y) \Delta_\A(x)+ \tau(\mbox{I} \otimes \rmR_x)\Delta_\A (y).
    \end{align*} 
This proves the Equivalence \eqref{eq: equivalence 1}, as required.

Now,  let us show  Equivalence \eqref{eq: equivalence 2}.  For all $x,y\in \A$ and $f\in \A^*$, we have 
\begin{equation*} 
    (f \diamond x) \diamond y =
          \rmL^*_f (x)  \diamond y +R^*_x (f) \diamond y =
          \rmL^*_f(x) \cdot y  + \rmL^*_{R^*_x (f)}(y)  
         +  \rmR^*_y(\rmR^*_x(f))  ,
\end{equation*}
and similar for $(f \diamond y) \diamond x $ just  by exchanging $x$ with $y$. Thus,
 \begin{align*}
          (f \diamond x) \diamond y= (f \diamond y) \diamond x\;\Longleftrightarrow\; \begin{cases}
             \rmL^*_f(x) \cdot y  + \rmL^*_{R^*_x (f)}(y) =\rmL^*_f(y) \cdot x  + \rmL^*_{R^*_y (f)}(x)  \in \A\\
        \rmR^*_y(\rmR^*_x(f))=\rmR^*_x(\rmR^*_y(f))\in \A^*
         \end{cases}
    \end{align*}
Now, for all $z\in \A$
    \begin{align*}
         \langle \rmR^*_y(\rmR^*_x(f)), z\rangle=  \langle \rmL_y(z), \rmR^*_x(f)\rangle
       =\langle \rmL_x(\rmL_y(z)),  f \rangle =\langle x \cdot( y \cdot z)  ,f\rangle,
    \end{align*}
   and similarly $\langle\rmR^*_x(\rmR^*_y(f)), z \rangle=\langle y \cdot( x \cdot z),f \rangle$. Thus, 
   \begin{equation*}
        \rmR^*_y(\rmR^*_x(f))=\rmR^*_x(\rmR^*_y(f)) \;\Longleftrightarrow\; x \cdot( y \cdot z)=y \cdot( x \cdot z)
   \end{equation*}
   for all $x,y,z\in \A$ and $f\in \A^*$, i.e., $(\A, \cdot)$ is an $L$-algebra.
 Moreover, for all $h\in \A^*$  we get
 
\begin{equation*}
\begin{split}
        \langle  \rmL^*_f(x) \cdot y  + \rmL^*_{R^*_x (f)}(y), h \rangle &= \langle \rmL^*_f(x) \cdot y, h\rangle + \langle \rmL^*_{f\rmL_x}(y), h\rangle= \langle \rmL^*_f(x) \cdot y, h\rangle + \langle y, h\star f\rmL_x\rangle\\
        &= \langle \Delta_{\A^*} (h), \rmL^*_f(x) \otimes y\rangle + \langle \Delta_\A(y), h \otimes f\rmL_x\rangle\\
        &= \langle \sum_{(h)} h_{(1)} \otimes  h_{(2)}, \rmL^*_f(x) \otimes y\rangle + \langle \sum_{(y)} y_{(1)} \otimes  y_{(2)}, h \otimes f\rmL_x\rangle\\
        & =\sum_{(h)} \langle x,   h_{(1)} \star f\rangle \, \langle y, h_{(2)}\rangle+ \sum_{(y)} \langle y_{(1)}, h\rangle \, \langle x \cdot y_{(2)}  , f\rangle\\
        &  =\sum_{(h)} \langle \Delta_\A(x),    h_{(1)}\otimes f\rangle \, \langle y, h_{(2)}\rangle+ \sum_{(y)} \langle y_{(1)} \otimes x \cdot y_{(2)}  , h\otimes f\rangle\\
&   =\sum_{(h)} \sum_{(x)} \langle  x_{(1)},h_{(1)}\rangle \, \langle x_{(2)},      f\rangle \, \langle y, h_{(2)}\rangle+ \sum_{(y)} \langle y_{(1)} \otimes x \cdot y_{(2)}  , h\otimes f\rangle\\
           &    = \sum_{(x)} \langle x_{(2)}, f \rangle \, \langle x_{(1)} \otimes y ,\sum_{(h)}h_{(1)}\otimes h_{(2)}\rangle+\sum_{(y)} \langle y_{(1)} \otimes x \cdot y_{(2)}  , h\otimes f\rangle\\
            &  =\sum_{(x)} \langle x_{(2)}, f \rangle \, \langle x_{(1)} \otimes y ,\Delta_{\A^*} (h)\rangle+\sum_{(y)} \langle y_{(1)} \otimes x \cdot y_{(2)}  , h\otimes f\rangle\\
            &  =\sum_{(x)} \langle x_{(2)}, f \rangle \, \langle x_{(1)} \cdot y, h\rangle+ \sum_{(y)} \langle y_{(1)} \otimes x \cdot y_{(2)}  , h\otimes f\rangle\\
             &=\langle\sum_{(x)}  x_{(1)}\cdot y \otimes x_{(2)}, h \otimes f\rangle+ \sum_{(y)} \langle y_{(1)} \otimes x \cdot y_{(2)}  , h\otimes f\rangle.
\end{split}
    \end{equation*}
  Doing similar calculations, just  by exchanging $x$ with $y$, we obtain that 
  \begin{equation*}
       \langle  \rmL^*_f(y) \cdot x  + \rmL^*_{R^*_y (f)}(x), h \rangle =\langle\sum_{(y)}  y_{(1)}\cdot x \otimes y_{(2)}, h \otimes f\rangle+ \sum_{(x)} \langle x_{(1)} \otimes y \cdot x_{(2)}  , h\otimes f\rangle.
  \end{equation*}
  Thus, we conclude that, for all $x,y\in \A$, $f\in \A^*$,
  \begin{align*}
               \rmL^*_f(x) \cdot y  + \rmL^*_{R^*_x (f)}(y) =\rmL^*_f(y) \cdot x  + \rmL^*_{R^*_y (f)}(x) \;\Longleftrightarrow\; \Delta_\A(x)+ (\mbox{I} \otimes \rmL_x)\Delta_\A (y)= (\mbox{I} \otimes \rmL_y)\Delta_\A (x) + ( \rmR_x  \otimes\mbox{I} ) \Delta_\A(y),  
    \end{align*} 
which proves Equivalence \eqref{eq: equivalence 2}, as desired.  The two remaining equivalences are proved similarly to those shown above, finishing the proof.
\end{proof}

We obtain a key characterization of nearly associative bialgebras as a consequence of Theorem \ref{teo: characterization nearly associative bialg} and Proposition \ref{pro:  nearly coasso coalg and L-coalg}.
\begin{teo}\label{teo: nearly ass bialgebras comultiplication}
     Let $(\A, \cdot)$ be a nearly associative $L$-algebra and $\Delta_{\A}: \A \longrightarrow \A \otimes \A$ be a comultiplication of $\A$.  Then $(\A, \cdot , \Delta_{\A}  )$  is a nearly associative bialgebra if and only if $(\A, \Delta_{\A})$ is a nearly coassociative $L$-coalgebra and 
     Conditions \eqref{eq: 1 nearly asso bialgebra} and  \eqref{eq: 2 nearly asso bialgebra} hold.
\end{teo}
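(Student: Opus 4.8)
The plan is to combine the characterization already proved in Theorem~\ref{teo: characterization nearly associative bialg} with the duality dictionary of Proposition~\ref{pro:  nearly coasso coalg and L-coalg}. By Definition~\ref{def: nearly associative bialgebras}, $(\A,\cdot,\Delta_\A)$ is a nearly associative bialgebra precisely when both $(\A,\cdot)$ and $(\A^*,\star)$ are $NAL$-algebras and $(\rmD(\A),\diamond)$ is a nearly associative algebra. The hypothesis already grants that $(\A,\cdot)$ is an $NAL$-algebra, so the remaining content splits into two independent parts: first, that $(\A^*,\star)$ is an $NAL$-algebra, and second, that $(\rmD(\A),\diamond)$ is nearly associative (equivalently, by Remark~\ref{rmk: nearly asso bial, $L$-bialg and $R$-bialg}, an $R$-algebra or $L$-algebra).

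First I would translate the condition on $(\A^*,\star)$ into a condition on the coalgebra $(\A,\Delta_\A)$. Since $\Delta_\A$ and $\star$ are linked by the relation \eqref{eq: product A^*=coproduct A}, Proposition~\ref{pro:  nearly coasso coalg and L-coalg} gives immediately that $(\A^*,\star)$ is a nearly associative algebra if and only if $(\A,\Delta_\A)$ is a nearly coassociative coalgebra, and that $(\A^*,\star)$ is an $L$-algebra if and only if $(\A,\Delta_\A)$ is an $L$-coalgebra. Hence $(\A^*,\star)$ is an $NAL$-algebra if and only if $(\A,\Delta_\A)$ is a nearly coassociative $L$-coalgebra, which is exactly the first assertion in the statement.

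Next I would handle the double. Given that $(\A,\cdot)$ and $(\A^*,\star)$ are both $NAL$-algebras — the latter now being equivalent to $(\A,\Delta_\A)$ being a nearly coassociative $L$-coalgebra — Theorem~\ref{teo: characterization nearly associative bialg} applies verbatim and tells us that $(\rmD(\A),\diamond)$ is a nearly associative algebra if and only if Conditions \eqref{eq: 1 nearly asso bialgebra} and \eqref{eq: 2 nearly asso bialgebra} hold. Putting the two equivalences together yields the claimed biconditional: $(\A,\cdot,\Delta_\A)$ is a nearly associative bialgebra $\iff$ $(\A^*,\star)$ is an $NAL$-algebra and $(\rmD(\A),\diamond)$ is nearly associative $\iff$ $(\A,\Delta_\A)$ is a nearly coassociative $L$-coalgebra and \eqref{eq: 1 nearly asso bialgebra}--\eqref{eq: 2 nearly asso bialgebra} hold.

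The proof is therefore essentially a bookkeeping argument, and I do not expect a genuine obstacle; the one point requiring a little care is making sure that Theorem~\ref{teo: characterization nearly associative bialg} is invoked only after we know $(\A^*,\star)$ is an $NAL$-algebra (that theorem takes this as a standing hypothesis), so the logical structure must first establish the coalgebra equivalence and only then feed it into Theorem~\ref{teo: characterization nearly associative bialg}. One should also note in passing that Conditions \eqref{eq: 1 nearly asso bialgebra} and \eqref{eq: 2 nearly asso bialgebra} are stated purely in terms of $\cdot$ and $\Delta_\A$, so the final criterion is intrinsic to $(\A,\cdot,\Delta_\A)$ and does not refer to $\star$ or to $\rmD(\A)$, which is the real point of the theorem.
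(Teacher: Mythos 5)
Your proposal is correct and follows essentially the same route as the paper, which presents this theorem precisely as a consequence of Theorem~\ref{teo: characterization nearly associative bialg} and Proposition~\ref{pro:  nearly coasso coalg and L-coalg}; your explicit remark that the coalgebra equivalence must be established first, so that the standing hypothesis of Theorem~\ref{teo: characterization nearly associative bialg} on $(\A^*,\star)$ is available, is exactly the right bookkeeping point.
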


\begin{exa}\label{ex: NA-bilagebra dim 4}
    Let $ \A$ be a vector space of dimension four with basis $\{e_1,e_2,e_3,e_4 \} $, and consider the bilinear product $\cdot:\A \times \A \longrightarrow \A$ defined by
    $$
    e_1\cdot e_1 = e_2 \cdot e_2=e_4, \quad e_1 \cdot e_2=e_3+e_4, \quad e_2 \cdot e_1=-e_3+e_4,
    $$
    and all other products equal to $0$. Then $(\A, \cdot)$ is nearly associative $L$-algebra isomorphic to the one defined in the Example \ref{ex: NAL-algebra associaitve}. Also, if we consider the comultiplication $\Delta_{\A}: \A \longrightarrow \A \otimes \A$ defined in Example \ref{ex: nearly coassociative L-coalgebra asso}, then $(\A, \cdot , \Delta_{\A}  )$ is  a nearly associative bialgebra. In fact, by Example \ref{ex: nearly coassociative L-coalgebra asso} $(\A, \Delta_{\A})$ is a nearly coassociative $L$-coalgebra and by straightforward calculations Conditions \eqref{eq: 1 nearly asso bialgebra} and  \eqref{eq: 2 nearly asso bialgebra} hold.
\end{exa}

\begin{exa}\label{ex: NA-bialgebra dim 6}
    Let $(\A, \cdot)$ be the six dimensional nearly associative $L$-algebra defined in Example \ref{ex: NAL-algebra} and $\Delta_{\A}: \A \longrightarrow \A \otimes \A$ the comultiplication defined in Example \ref{ex: nearly coassociative L-coalgebra not asso}. Then $(\A, \cdot , \Delta_{\A}  )$ is  a nearly associative bialgebra. In fact, by Example \ref{ex: nearly coassociative L-coalgebra not asso} $(\A, \Delta_{\A})$ is a nearly coassociative $L$-coalgebra and by straightforward calculations Conditions \eqref{eq: 1 nearly asso bialgebra} and  \eqref{eq: 2 nearly asso bialgebra} hold.
\end{exa}

\section{Coboundary nearly associative bialgebras,  $LR$-Yang-Baxter equation, and $r-$matrices} \label{Section: Coboundary NA bialgebras}

In the previous section, we studied nearly associative bialgebras in general, now we focus on a special class of them, namely coboundary nearly associative bialgebras. This class gives us a natural framework to investigate the analog of the Yang-Baxter equation and the matrix $r$ in our context.

Let  $(A, \cdot)$ be an algebra and consider  $  r = \sum_{i=1}^n a_i \otimes b_i \in A \otimes A$. Since in general in our structure we do not have the unit, we write directly the following elements in $A\otimes A \otimes A$ (as in \cite{Zhe1997}):

 \begin{align*}
 &r_{12} r_{23}  = \sum_{i,j=1}^n a_i \otimes  (b_i \cdot a_j ) \otimes b_j,\\
& r_{13} r_{12}   = \sum_{i,j=1}^n
 (a_i\cdot a_j)
\otimes b_j \otimes b_i,\\
&r_{23} r_{13} = \sum_{i,j=1}^n a_j \otimes
a_i\otimes (b_i \cdot b_j),\\
& \mathbf{LR}(r):= r_{12}r_{23}-r_{13}r_{12}-r_{23}r_{13}.
\end{align*}
Recall that $\displaystyle r \in A\otimes A$ is \textit{skew-symmetric} if $\displaystyle
r +\tau(r)=0$, where $\tau: A \otimes A \longrightarrow A \otimes A$ is the twisted map. 
Now, we define the comultiplication $\displaystyle   \Delta_{r}: A  \longrightarrow A \otimes A $ by,  for  $x \in A$,
\begin{equation}\label{eq:comultiplication r}
 \begin{split}
\Delta_{r} (x) & : =( \rmL_x \otimes \mbox{ I}- \mbox{I} \otimes \rmR_x)(r)=\sum_{i=1}^n \Big((x\cdot a_i) \otimes b_i  -a_i \otimes ( b_i \cdot x) \Big).
 \end{split}
\end{equation}
Also, for $x \in  A$, we consider the operator $\ad_x:=\rmL_x - \rmR_x$ of $A$, i.e., $\ad_x(y)=x\cdot y -y \cdot x$ for all $y\in A$. Moreover, as in Section \ref{Section: Coalgebras}, we denote the product ``$\cdot$" of $A$ also by $m: A \otimes A \longrightarrow A$, that is,  $m(x\otimes y):= x\cdot y,\,$  for   $ x,y \in A$, i.e., $(A,m=\cdot)$.

\begin{pro}\label{pro: nearly coass coalg}
    Let $(A, m=\cdot)$ be an algebra and $r \in A \otimes A$ be skew-symmetric. If $\Delta_r$ is the comultiplication defined by \eqref{eq:comultiplication r}, then $(A, \Delta_r)$ is a nearly coassociative coalgebra if and only if, for all $x\in A$,
    \begin{equation*}
        \Big( \big(m (\ad_x \otimes \mbox{I}) \otimes \mbox{I} \otimes \mbox{I} + m \otimes \mbox{I} \otimes \ad_x \big)(\mbox{I} \otimes \xi)- \big(\mbox{I} \otimes m (\mbox{I} \otimes \ad_x) \otimes \mbox{I} + \mbox{I} \otimes m \otimes \ad_x \big)\Big)(r \otimes r)=0.
    \end{equation*}
\end{pro}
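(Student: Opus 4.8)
The plan is to unravel Definition~\ref{def: nearly coassociative coalgebra} by hand. Nearly coassociativity of $(A,\Delta_r)$ means $(\Delta_r\otimes\mathrm{I})\Delta_r(x)=\xi(\mathrm{I}\otimes\Delta_r)\Delta_r(x)$ for all $x\in A$, so it is enough to prove that the difference of these two elements of $A\otimes A\otimes A$ is exactly the left-hand side of the displayed identity evaluated at $x$; the asserted equivalence follows at once. The only algebraic fact used is the skew-symmetry of $r$, so the statement in fact holds for an arbitrary (not necessarily associative) $A$.

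First I would expand $(\Delta_r\otimes\mathrm{I})\Delta_r(x)$. Substituting $\Delta_r(x)=\sum_i(x\cdot a_i)\otimes b_i-a_i\otimes(b_i\cdot x)$ and then applying \eqref{eq:comultiplication r} in the first leg to $x\cdot a_i$ and to $a_i$ produces four double sums in $A^{\otimes 3}$; a relabelling $i\leftrightarrow j$ in two of them puts the result into four monomial terms. I would do the same for $\xi(\mathrm{I}\otimes\Delta_r)\Delta_r(x)$, applying the cyclic map $\xi$ last, again obtaining four monomial terms. In parallel I would expand the target operator $\big((m(\ad_x\otimes\mathrm{I})\otimes\mathrm{I}\otimes\mathrm{I}+m\otimes\mathrm{I}\otimes\ad_x)(\mathrm{I}\otimes\xi)-(\mathrm{I}\otimes m(\mathrm{I}\otimes\ad_x)\otimes\mathrm{I}+\mathrm{I}\otimes m\otimes\ad_x)\big)(r\otimes r)$, using $r\otimes r=\sum_{i,j}a_i\otimes b_i\otimes a_j\otimes b_j$, the identity $(\mathrm{I}\otimes\xi)(r\otimes r)=\sum_{i,j}a_i\otimes a_j\otimes b_j\otimes b_i$, and $\ad_x=\rmL_x-\rmR_x$; distributing everything yields eight monomial terms in $A^{\otimes 3}$.

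The heart of the argument is matching the $4+4$ terms against these eight. The four terms coming from $(\Delta_r\otimes\mathrm{I})\Delta_r(x)$ coincide, after the relabelling above, with four of the eight target terms on the nose (no use of skew-symmetry is needed for these). For the four terms coming from $\xi(\mathrm{I}\otimes\Delta_r)\Delta_r(x)$ I would invoke skew-symmetry: since $\sum_i a_i\otimes b_i=-\sum_i b_i\otimes a_i$, one may interchange the tensor slot occupied by $a_i$ with the one occupied by $b_i$ (and, independently, those carrying the index $j$) at the cost of a minus sign. After these swaps and, where needed, a relabelling $i\leftrightarrow j$, each of the four becomes the negative of one of the remaining four target terms --- and that sign is precisely cancelled by the minus sign with which $\xi(\mathrm{I}\otimes\Delta_r)\Delta_r$ enters the difference. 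Collecting, $(\Delta_r\otimes\mathrm{I})\Delta_r(x)-\xi(\mathrm{I}\otimes\Delta_r)\Delta_r(x)$ equals the displayed operator applied to $r\otimes r$, which gives the result.

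The main obstacle is nothing more than bookkeeping in this last step: one must carry eight monomials written in two different index labellings, apply skew-symmetry consistently so that all signs line up, and verify that the two groups of four terms partition the eight target terms exactly. No structural property of $A$ (flexibility, Lie- or Jordan-admissibility, etc.) is needed, but the $a\leftrightarrow b$ swaps and $i\leftrightarrow j$ relabellings must be tracked with care to avoid sign errors.
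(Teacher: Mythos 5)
Your proposal is correct and follows essentially the same route as the paper: expand $(\Delta_r\otimes\mathrm{I})\Delta_r(x)$ and $\xi(\mathrm{I}\otimes\Delta_r)\Delta_r(x)$ into four monomial double sums each, and use the skew-symmetry $\sum_i a_i\otimes b_i=-\sum_i b_i\otimes a_i$ (together with $i\leftrightarrow j$ relabellings) to identify their difference with the displayed operator applied to $r\otimes r$. The only cosmetic difference is that the paper transforms the eight-term coassociativity identity into the operator form, whereas you expand the operator separately and match term by term; the bookkeeping is identical.
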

\begin{proof}
     Let $x\in A$, by \eqref{eq:comultiplication r} we have
    \begin{align*}
        (\Delta_r\otimes \mbox{I})\Delta_r(x)&=(\Delta_r\otimes \mbox{I}) \Big(\sum_{i=1}^n \Big((x\cdot a_i) \otimes b_i  -a_i \otimes ( b_i \cdot x) \Big)\Big)= \sum_{i=1}^n \Big(\Delta_r (x\cdot a_i) \otimes b_i  - \Delta_r (a_i) \otimes ( b_i \cdot x) \Big)\\
        &= \sum_{i,j=1}^n \Big(((x\cdot a_i)\cdot a_j)\otimes b_j \otimes b_i 
        - a_j \otimes (b_j \cdot (x\cdot a_i))\otimes b_i
        -(a_i \cdot a_j) \otimes b_j \otimes ( b_i \cdot x)
        +  a_j \otimes (b_j \cdot a_i) \otimes ( b_i \cdot x)\Big)
    \end{align*}   
and
\begin{align*}
         \xi(\mbox{I} \otimes \Delta_r)\Delta_r(x)&=\xi (\mbox{I} \otimes \Delta_r) \Big(\sum_{i=1}^n \Big((x\cdot a_i) \otimes b_i  -a_i \otimes ( b_i \cdot x) \Big)\Big)= \xi \Big(\sum_{i=1}^n \Big((x\cdot a_i) \otimes \Delta_r(b_i)  -a_i \otimes \Delta_r( b_i \cdot x) \Big)\Big)\\
        &= \xi\Big(\sum_{i,j=1}^n \Big((x\cdot a_i) \otimes 
        (b_i\cdot a_j) \otimes b_j  -(x\cdot a_i) \otimes a_j \otimes ( b_j \cdot b_i)
        -a_i \otimes 
        (( b_i \cdot x)\cdot a_j) \otimes b_j  
        +a_i \otimes a_j \otimes ( b_j \cdot ( b_i \cdot x))\Big)\Big)\\
        &  =\sum_{i,j=1}^n \Big(
        (b_i\cdot a_j) \otimes b_j \otimes (x\cdot a_i) 
        -  a_j \otimes ( b_j \cdot b_i) \otimes (x\cdot a_i)
        -  (( b_i \cdot x)\cdot a_j) \otimes b_j  \otimes a_i
        +  a_j \otimes ( b_j \cdot ( b_i \cdot x))\otimes a_i\Big).
    \end{align*}
Then by Definition \ref{def: nearly coassociative coalgebra} $(A, \Delta_r)$ is a nearly coassociative coalgebra  (i.e., $(\Delta_r \otimes \mbox{I}) \Delta_r  =\xi (\mbox{I} \otimes \Delta_r) \Delta_r $)   if and only if
\begin{align}\label{eq: computation r NCC}
        &\sum_{i,j=1}^n \Big(
        ((x\cdot a_i)\cdot a_j)\otimes b_j \otimes b_i 
         +  (( b_i \cdot x)\cdot a_j) \otimes b_j  \otimes a_i\Big) 
         +\sum_{i,j=1}^n \Big(  -(a_i \cdot a_j) \otimes b_j \otimes ( b_i \cdot x)
        -(b_i\cdot a_j) \otimes b_j \otimes (x\cdot a_i) \Big)\nonumber \\
        & +\sum_{i,j=1}^n \Big( - a_j \otimes (b_j \cdot (x\cdot a_i))\otimes b_i
         -  a_j \otimes ( b_j \cdot ( b_i \cdot x))\otimes a_i \Big)
         +\sum_{i,j=1}^n \Big(   a_j \otimes (b_j \cdot a_i) \otimes ( b_i \cdot x)
        +  a_j \otimes ( b_j \cdot b_i) \otimes (x\cdot a_i)\Big)=0.
\end{align}
     Now, since $\tau(r)=-r$, we get that  Condition \eqref{eq: computation r NCC} is equivalent to
\begin{align*}
        &\sum_{i,j=1}^n \Big(
       ((x\cdot a_i)\cdot a_j)\otimes b_j \otimes b_i 
         -  (( a_i \cdot x)\cdot a_j) \otimes b_j  \otimes b_i\Big) 
         + \sum_{i,j=1}^n \Big( (a_i\cdot a_j) \otimes b_j \otimes (x\cdot b_i) 
         -(a_i \cdot a_j) \otimes b_j \otimes ( b_i \cdot x) \Big)\\
         &  +\sum_{i,j=1}^n \Big( 
          a_j \otimes ( b_j \cdot ( a_i \cdot x))\otimes b_i 
          - a_j \otimes (b_j \cdot (x\cdot a_i))\otimes b_i\Big)
       +\sum_{i,j=1}^n \Big( 
          a_j \otimes (b_j \cdot a_i) \otimes ( b_i \cdot x)
        -  a_j \otimes ( b_j \cdot a_i) \otimes (x\cdot b_i)\Big)=0.
\end{align*}
Thus, $(A, \Delta_r)$ is a nearly coassociative coalgebra if and only if 
\begin{align*}
       & \Big( \big(m (\ad_x \otimes \mbox{I}) \otimes \mbox{I} \otimes \mbox{I} + m \otimes \mbox{I} \otimes \ad_x \big)(\mbox{I} \otimes \xi)- \big(\mbox{I} \otimes m (\mbox{I} \otimes \ad_x) \otimes \mbox{I} + \mbox{I} \otimes m \otimes \ad_x \big)\Big)(r \otimes r)=0,
\end{align*}
as required.
\end{proof}

    \begin{exa}\label{ex: Nearly coalgebra r_2,6}
      Let $(\A, \cdot)$ be the six dimensional nearly associative $L$-algebra defined in Example \ref{ex: NAL-algebra}, and consider $r_{2,6}=e_2 \otimes e_6 - e_6 \otimes e_2\in \A \otimes \A$. If $\Delta_{r_{2,6}}$ is the comultiplication defined by \eqref{eq:comultiplication r}, then, for $e_i\in \A$, $1\leq i \leq 6$,
 \begin{align*} 
\Big( (m  (\ad_{e_i} & \otimes \mbox{I}) \otimes \mbox{I} \otimes \mbox{I} + m \otimes \mbox{I} \otimes \ad_{e_i})(\mbox{I} \otimes \xi)- (\mbox{I} \otimes m (\mbox{I} \otimes \ad_{e_i}) \otimes \mbox{I} + \mbox{I} \otimes m \otimes \ad_{e_i})\Big)(r \otimes r)= \nonumber\\
        =&
       ((e_i\cdot e_2)\cdot e_2)\otimes e_6 \otimes e_6 
         -  (( e_2 \cdot e_i)\cdot e_2) \otimes e_6  \otimes e_6 
          -e_6 \otimes ( e_2 \cdot ( e_2 \cdot e_i))\otimes e_6 
          + e_6 \otimes (e_2 \cdot (e_i\cdot e_2))\otimes e_6.
\end{align*}
Since for $i\neq 1$,
$(e_i\cdot e_2)\cdot e_2 = ( e_2 \cdot e_i)\cdot e_2 = e_2 \cdot ( e_2 \cdot e_i) = e_2 \cdot (e_i\cdot e_2) =0$, and for $i=1$,
\begin{align*}
    ((e_1\cdot e_2) \cdot e_2)\otimes e_6  \otimes e_6 
         -  (( e_2 \cdot e_1)\cdot e_2) \otimes e_6  \otimes e_6 
          -e_6 \otimes ( e_2 \cdot ( e_2 \cdot e_1))\otimes e_6 
          + e_6 \otimes (e_2 \cdot (e_1\cdot e_2))\otimes e_6 &=\\
          = (e_3 \cdot e_2)\otimes e_6 \otimes e_6 
         -  (e_4\cdot e_2) \otimes e_6  \otimes e_6 
          -e_6 \otimes ( e_2 \cdot e_4)\otimes e_6 
          + e_6 \otimes (e_2 \cdot e_3)\otimes e_6
          &=0,
\end{align*}
 it follows that $(A, \Delta_{r_{2,6}})$ is a nearly coassociative coalgebra.
\end{exa}

\begin{pro}\label{pro: L-coalg, R-coalg r-matrices}
    Let $(\A, \cdot)$ be a nearly associative $L$-algebra  and $r \in \A \otimes \A$ be skew-symmetric. If $\Delta_r$ is the comultiplication defined by \eqref{eq:comultiplication r}, then we have the following assertions:
    \begin{enumerate}
        \item $(\A, \Delta_r)$ is an $L$-coalgebra if and only if $\big((\rmL_x \otimes \mbox{I} \otimes \mbox{I})- (\tau \otimes \mbox{I})(\rmL_x \otimes \mbox{I} \otimes \mbox{I})\big)\mathbf{LR}(r)=0.$
        
        \item $(\A, \Delta_r)$ is an $R$-coalgebra if and only if
        $\big((\mbox{I}\otimes \mbox{I}\otimes\rmR_x) -  (\mbox{I}\otimes \tau)(\mbox{I}\otimes \mbox{I}\otimes\rmR_x)\big)\mathbf{LR}(r)=0.$
    \end{enumerate}
\end{pro}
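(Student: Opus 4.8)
The plan is to treat both parts by the same direct computation: substitute the formula \eqref{eq:comultiplication r} for $\Delta_r$ into the defining relation of an $L$-coalgebra (for part~(1)) or of an $R$-coalgebra (for part~(2)), expand each side into eight elementary tensors in $\A\otimes\A\otimes\A$, and compare the two sides term by term. The two tools used throughout are the skew-symmetry $\tau(r)=-r$ (which lets one relabel the dummy indices $i,j$ freely) and the structural identities of Proposition~\ref{pro:LR cond NAL} together with the quasi-commutativity $(x\cdot y)\cdot z=z\cdot(y\cdot x)$ of Remark~\ref{rmk: NAL quasi-comm}.

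For part~(1), I would first compute $(\mbox{I}\otimes\Delta_r)\Delta_r(x)$ and $(\tau\otimes\mbox{I})(\mbox{I}\otimes\Delta_r)\Delta_r(x)$ by applying \eqref{eq:comultiplication r} twice; each is a sum of four double sums over $i,j$, so the difference is a difference of four $\pm$-pairs. I expect the pair of summands in which an inner right multiplication is applied after another one — the terms carrying the factor $b_j\cdot(b_i\cdot x)$ — to cancel after the relabeling $i\leftrightarrow j$, precisely because $\A$ is an $L$-algebra, so $b_j\cdot(b_i\cdot x)=b_i\cdot(b_j\cdot x)$. Of the remaining three $\pm$-pairs, one should match the corresponding piece of the target immediately, a second after $i\leftrightarrow j$, and the third — which carries the mixed product $(b_i\cdot x)\cdot a_j$ — after applying the nearly associative identity $(b_i\cdot x)\cdot a_j=x\cdot(a_j\cdot b_i)$ (equivalently, after skew-symmetry and relabeling, $(a_j\cdot x)\cdot a_i=x\cdot(a_i\cdot a_j)$) to produce the factor $x\cdot(a_i\cdot a_j)$. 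Assembling the six surviving summands, the difference should be exactly $\big((\rmL_x\otimes\mbox{I}\otimes\mbox{I})-(\tau\otimes\mbox{I})(\rmL_x\otimes\mbox{I}\otimes\mbox{I})\big)$ applied to $r_{12}r_{23}-r_{13}r_{12}-r_{23}r_{13}=\mathbf{LR}(r)$, which gives the stated equivalence.

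For part~(2), I would run the parallel computation with $(\Delta_r\otimes\mbox{I})\Delta_r=(\mbox{I}\otimes\tau)(\Delta_r\otimes\mbox{I})\Delta_r$. This time the pair of summands with a repeated \emph{left} multiplication, carrying the factor $(x\cdot a_i)\cdot a_j$, should cancel after $i\leftrightarrow j$, now because $\A$ is an $R$-algebra, i.e.\ $(x\cdot a_i)\cdot a_j=(x\cdot a_j)\cdot a_i$; and the pair carrying the factor $b_j\cdot(x\cdot b_i)$ should be rewritten, using the $L$-algebra identity followed by the quasi-commutativity of Remark~\ref{rmk: NAL quasi-comm}, as $b_j\cdot(x\cdot b_i)=x\cdot(b_j\cdot b_i)=(b_i\cdot b_j)\cdot x$. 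The six surviving summands then reassemble into $\big((\mbox{I}\otimes\mbox{I}\otimes\rmR_x)-(\mbox{I}\otimes\tau)(\mbox{I}\otimes\mbox{I}\otimes\rmR_x)\big)\mathbf{LR}(r)$. As a cross-check, part~(2) should also follow from part~(1) applied to the opposite algebra $\A^{\op}$, which by Remark~\ref{rmk: opposite algebra} and Lemma~\ref{lem:R-L} is again a nearly associative $L$-algebra, once one tracks how passing to $\A^{\op}$ interchanges the $L$- and $R$-coalgebra conditions and the two boundary operators.

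I expect the main obstacle to be purely organizational: matching each of the eight elementary tensors on one side of each identity with the correct one on the other side requires selecting, term by term, the right pairing of an index relabeling with one of the identities $(xy)z=y(zx)$, $x(yz)=y(xz)$, $(xy)z=(xz)y$, $(xy)z=z(yx)$ — no single one of them handles every term — and the essential structural point is that the cancellation of the ``doubled-multiplication'' summands is exactly what forces the hypothesis that $\A$ be an $L$-algebra (for~(1)) or an $R$-algebra (for~(2)) rather than merely nearly associative.
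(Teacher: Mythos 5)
Your plan is correct and follows essentially the same route as the paper: expand $(\mbox{I}\otimes\Delta_r)\Delta_r(x)$ and its $(\tau\otimes\mbox{I})$-image into four double sums each, cancel the pair carrying $b_j\cdot(b_i\cdot x)$ via the $L$-algebra identity after relabeling $i\leftrightarrow j$, rewrite the mixed term $(b_i\cdot x)\cdot a_j=x\cdot(a_j\cdot b_i)$ by near-associativity together with skew-symmetry, and reassemble the six survivors into $\big((\rmL_x\otimes\mbox{I}\otimes\mbox{I})-(\tau\otimes\mbox{I})(\rmL_x\otimes\mbox{I}\otimes\mbox{I})\big)\mathbf{LR}(r)$, with part (2) handled by the parallel computation (the paper likewise only writes out (1) and declares (2) similar). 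The only point needing care in your optional opposite-algebra cross-check is that the coboundary comultiplication built from the opposite product is $\tau\Delta_r$ rather than $\Delta_r$, so the dualization must track that twist as well.
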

\begin{proof}
Let us prove Equivalence (1). The Equivalence (2) is proven similarly.
Let $x\in \A$. By \eqref{eq:comultiplication r} we have
\begin{align*}
        (\mbox{I} \otimes \Delta_r)\Delta_r(x)&=(\mbox{I} \otimes \Delta_r) \Big(\sum_{i=1}^n \Big((x\cdot a_i) \otimes b_i  -a_i \otimes ( b_i \cdot x) \Big)\Big)= \sum_{i=1}^n \Big((x\cdot a_i) \otimes \Delta_r(b_i)  -a_i \otimes \Delta_r( b_i \cdot x) \Big)\\
        &= \sum_{i,j=1}^n \Big((x\cdot a_i) \otimes 
        (b_i\cdot a_j) \otimes b_j  -(x\cdot a_i) \otimes a_j \otimes ( b_j \cdot b_i)
        -a_i \otimes 
        (( b_i \cdot x)\cdot a_j) \otimes b_j  
        +a_i \otimes a_j \otimes ( b_j \cdot ( b_i \cdot x))\Big).
    \end{align*}
and, consequently,
     \begin{align*}
        (\tau \otimes \mbox{I})(\mbox{I} \otimes \Delta_r)\Delta_r(x)
        &= \sum_{i,j=1}^n \Big( 
        (b_i\cdot a_j) \otimes (x\cdot a_i) \otimes b_j 
        - a_j \otimes (x\cdot a_i) \otimes( b_j \cdot b_i)
        -
        (( b_i \cdot x)\cdot a_j) \otimes a_i \otimes  b_j  
        + a_j \otimes a_i \otimes ( b_j \cdot ( b_i \cdot x))\Big).
    \end{align*}    
 Then by Definition \ref{def: LR coalgebras} $(\A, \Delta_r)$ is an $L$-coalgebra (that is, $ (\mbox{I}\otimes \Delta_r) \Delta_r  =(\tau \otimes \mbox{I})( \mbox{I} \otimes \Delta_r)\Delta_r$) if and only if 
     \begin{align}\label{eq: computation r L-alg}
        &\sum_{i,j=1}^n \Big((x\cdot a_i) \otimes 
        (b_i\cdot a_j) \otimes b_j  -(x\cdot a_i) \otimes a_j \otimes ( b_j \cdot b_i)
        +
        (( b_i \cdot x)\cdot a_j) \otimes a_i \otimes  b_j  
        \Big) \nonumber\\
        &+\sum_{i,j=1}^n \Big(a_i \otimes a_j \otimes ( b_j \cdot ( b_i \cdot x))
         - a_j \otimes a_i \otimes ( b_j \cdot ( b_i \cdot x))\Big) \nonumber\\
         &+ \sum_{i,j=1}^n \Big( 
          a_j \otimes (x\cdot a_i) \otimes( b_j \cdot b_i)
        -(b_i\cdot a_j) \otimes (x\cdot a_i) \otimes b_j 
        -a_i \otimes 
        (( b_i \cdot x)\cdot a_j) \otimes b_j  
       \Big)=0.
    \end{align}
       Since $\A$ is an $NAL$-algebra  and $\tau(r)=-r$, it follows that Equation \eqref{eq: computation r L-alg} is equivalent to
       \begin{align*}
        &\sum_{i,j=1}^n \Big((x\cdot a_i) \otimes 
        (b_i\cdot a_j) \otimes b_j  -(x\cdot a_i) \otimes a_j \otimes ( b_j \cdot b_i)
        -
        (x\cdot ( a_j \cdot a_i)) \otimes b_i \otimes  b_j  
        \Big) \nonumber\\
         &+ \sum_{i,j=1}^n \Big( 
          a_j \otimes (x\cdot a_i) \otimes( b_j \cdot b_i)
        +(a_i\cdot a_j) \otimes (x\cdot b_i) \otimes b_j 
        -a_i \otimes 
        (x \cdot (a_j \cdot b_i)) \otimes b_j  
       \Big)=0.
    \end{align*}
So, as a consequence, since $r$ is skew-symmetric, Equation \eqref{eq: computation r L-alg} is also equivalent to
    \begin{align*}
         &\sum_{i,j=1}^n \Big((x\cdot a_i) \otimes 
        (b_i\cdot a_j) \otimes b_j  
        -(x\cdot a_i) \otimes a_j \otimes ( b_j \cdot b_i)
        -(x\cdot ( a_j \cdot a_i)) \otimes b_i \otimes  b_j  \Big)\\
        &+ (\tau \otimes \mbox{I})\Big(\sum_{i,j=1}^n \Big(  -(x\cdot a_i) \otimes (b_i\cdot a_j) \otimes b_j 
        +  (x\cdot a_i) \otimes a_j \otimes( b_j \cdot b_i) 
        + (x \cdot ( a_j \cdot a_i)) \otimes b_i \otimes b_j \Big)
        \Big)=0.
    \end{align*}
Thus, $(\A, \Delta_r)$ is an $L$-coalgebra if and only if 
     \begin{align*}
        \big(\rmL_x \otimes \mbox{I} \otimes \mbox{I}- (\tau \otimes \mbox{I})(\rmL_x \otimes \mbox{I} \otimes \mbox{I})\big)(r_{12}r_{23}-r_{23}r_{13}-r_{13}r_{12})=0,
    \end{align*}
    as required.
\end{proof}

   \begin{exa}\label{ex: L-coalgebra r_2,6}  
      Let $(\A, \cdot)$ be the six dimensional nearly associative $L$-algebra defined in Example \ref{ex: NAL-algebra}, and consider $r_{2,6}=e_2 \otimes e_6 - e_6 \otimes e_2\in \A \otimes \A$. If $\Delta_{r_{2,6}}$ is the comultiplication defined by \eqref{eq:comultiplication r}, then, for $e_i\in \A$, $1\leq i \leq 6$,
 \begin{align*}
& \big((\rmL_{e_i} \otimes \mbox{I} \otimes \mbox{I})- (\tau \otimes \mbox{I})(\rmL_{e_i} \otimes \mbox{I} \otimes \mbox{I})\big) \mathbf{LR}(r)=  
- (e_i \cdot e_5 )\otimes e_6 \otimes e_6  +  e_6 \otimes (e_i \cdot e_5 )
\otimes  e_6 =0.
\end{align*}
      Consequently, $(A, \Delta_{r_{2,6}})$ is an   $L$-coalgebra.
\end{exa}

\begin{defi}
A nearly associative bialgebra $(\A, \cdot, \Delta_\A)$ is called \textit{coboundary} if there exists $\displaystyle r \in \A \otimes \A$ such that  $\Delta_\A (x) =\Delta_{r} (x)$, for all $x\in \A$, where $ \Delta_{r} $ is the comultiplication defined by 
 \eqref{eq:comultiplication r}.

\end{defi}

\begin{exa}
     Let $(\A, \cdot, \Delta_\A)$ be the four dimensional nearly associative bialgebra of Example \ref{ex: NA-bilagebra dim 4}. If we consider $r=e_1 \otimes e_4 - e_4 \otimes e_1\in \A \otimes \A$, then $\Delta_\A (x) =\Delta_{r} (x)= x\cdot e_1 \otimes e_4 + e_4 \otimes e_1 \cdot x$, for all $x \in \A$. Consequently, $(\A, \cdot, \Delta_\A)$ is a coboundary nearly associative bialgebra.
\end{exa}

\begin{exa}
    Let $(\A, \cdot, \Delta_\A)$ be the six dimensional nearly associative bialgebra of Example \ref{ex: NA-bialgebra dim 6}. If we consider $r_{5,6}=e_5 \otimes e_6 - e_6 \otimes e_5\in \A \otimes \A$, then $\Delta_\A (x) =\Delta_{r_{5,6}} (x)$, for all $x \in \A$. Consequently, $(\A, \cdot, \Delta_\A)$ is a coboundary nearly associative bialgebra.
\end{exa}

\begin{rmk}
    Let $(\A, \cdot)$ be a nearly associative $L$-algebra  and $r \in \A \otimes \A$ be skew-symmetric. If $\Delta_r$ is the comultiplication defined by \eqref{eq:comultiplication r}, then $(\A, \cdot, \Delta_r)$ is not always a nearly associative bialgebra.
\end{rmk}

\begin{exa}
     Let $(\A, \cdot)$ be the six dimensional nearly associative $L$-algebra defined in Example \ref{ex: NAL-algebra} and $r_{1,5}= e_1 \otimes e_5 - e_5 \otimes e_1$. Consider the comultiplication $\Delta_{r_{1,5}}: \A \longrightarrow \A \otimes \A$ defined by \eqref{eq:comultiplication r}, i.e., for all $x\in \A$
     $$
     \Delta_{r_{1,5}}(x)=x\cdot e_1 \otimes e_5 - e_1 \otimes e_5 \cdot x - x \cdot e_5 \otimes e_1 + e_5 \otimes e_1 \cdot x.
     $$
     Since
         $(\Delta_{r_{1,5}} \otimes \mbox{I}) \Delta_{r_{1,5}}(e_2)
         = e_6 \otimes e_5 \otimes e_3+ e_5 \otimes e_6 \otimes e_3$ and 
         $(\mbox{I} \otimes \Delta_{r_{1,5}}) \Delta_{r_{1,5}}(e_2)
         = e_4 \otimes e_6 \otimes e_5 + e_4 \otimes e_5 \otimes e_6$,
     it follows that
     \begin{align*}
(\Delta_{r_{1,5}}\otimes \mbox{I}) \Delta_{r_{1,5}}(e_2)&=e_6 \otimes e_5 \otimes e_3+ e_5 \otimes e_6 \otimes e_3 \neq 
 e_6 \otimes e_5 \otimes e_4 +  e_5 \otimes e_6 \otimes e_4
=  \xi (\mbox{I} \otimes \Delta_{r_{1,5}}) \Delta_{r_{1,5}}(e_2),\\
 (\mbox{I}\otimes \Delta_{r_{1,5}}) \Delta_{r_{1,5}}(e_2) &= e_4 \otimes e_6 \otimes e_5 + e_4 \otimes e_5 \otimes e_6 \neq
   e_6 \otimes e_4 \otimes e_5 + e_5 \otimes e_4 \otimes  e_6
 =(\tau \otimes \mbox{I})( \mbox{I} \otimes \Delta_{r_{1,5}})\Delta_{r_{1,5}}(e_2),\\
 (\Delta_{r_{1,5}} \otimes\mbox{I}) \Delta_{r_{1,5}}(e_2) &=e_6 \otimes e_5 \otimes e_3+ e_5 \otimes e_6 \otimes e_3 \neq 
 e_6 \otimes e_3 \otimes e_5 + e_5 \otimes e_3 \otimes e_6 
 =(\mbox{I}\otimes \tau)( \Delta_{r_{1,5}} \otimes\mbox{I})\Delta_{r_{1,5}} (e_2).
\end{align*} 
     Then $(\A, \Delta_{r_{1,5}})$ is neither a nearly coassociative coalgebra nor an  $L$-coalgebra nor an  $R$-coalgebra. Consequently, $(\A, \cdot, \Delta_{r_{1,5}})$ is not a nearly associative bialgebra.  
\end{exa}


\begin{teo} \label{teo: coboundary NALR-algebra}
 Let $(\A, \cdot )$ be a nearly associative $L$-algebra and $r \in \A \otimes \A$ be skew-symmetric. If $\Delta_r$ is the comultiplication defined by \eqref{eq:comultiplication r}, then the triple $(\A, \cdot, \Delta_r)$ is a   coboundary  nearly associative bialgebra   if and only if  the following conditions hold:
\begin{enumerate}
\item $\big((\rmL_x \otimes \mbox{I} \otimes \mbox{I})- (\tau \otimes \mbox{I})(\rmL_x \otimes \mbox{I} \otimes \mbox{I})\big) \mathbf{LR}(r)=0$,

\vspace{1mm}
\item  $\Big( (m (\ad_x \otimes \mbox{I}) \otimes \mbox{I} \otimes \mbox{I} + m \otimes \mbox{I} \otimes \ad_x)(\mbox{I} \otimes \xi)- (\mbox{I} \otimes m (\mbox{I} \otimes \ad_x) \otimes \mbox{I} + \mbox{I} \otimes m \otimes \ad_x)\Big)(r \otimes r)=0$,

\vspace{1mm}
\item $(\rmL_x \otimes \rmR_y- \rmR_x \otimes \rmL_y)(r)=0$,

\vspace{1mm}
\item $
  (\ad_y \rmR_x  \otimes \mbox{I} - \mbox{I} \otimes  \rmR_x \ad_y  )(r)=0$,

  \vspace{1mm}
\item $(\rmL_y \otimes \rmL_x- \rmR_y \otimes \rmR_x)(r)=0$,

\vspace{1mm}
\item $(\mbox{I} \otimes \mbox{I} -\tau)(  \ad_{x\cdot y}  \otimes \mbox{I} )(r)=0$,
 \end{enumerate}
 for all $ x,y\in \A$.
\end{teo}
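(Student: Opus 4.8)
The plan is to run everything through Theorem~\ref{teo: nearly ass bialgebras comultiplication}. Since $\Delta_r$ is of coboundary type by construction, $(\A,\cdot,\Delta_r)$ is a coboundary nearly associative bialgebra if and only if it is a nearly associative bialgebra, which by that theorem holds exactly when $(\A,\Delta_r)$ is a nearly coassociative $L$-coalgebra and the compatibility relations \eqref{eq: 1 nearly asso bialgebra} and \eqref{eq: 2 nearly asso bialgebra} hold. The coalgebra half is already available: by Proposition~\ref{pro: nearly coass coalg}, $(\A,\Delta_r)$ is a nearly coassociative coalgebra exactly when Condition~(2) holds; by Proposition~\ref{pro: L-coalg, R-coalg r-matrices}(1) it is an $L$-coalgebra exactly when Condition~(1) holds (and then, by Corollary~\ref{cor: NCAcoal L-coal and R-coal}, automatically an $R$-coalgebra). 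Hence $(\A,\Delta_r)$ is a nearly coassociative $L$-coalgebra if and only if Conditions~(1)--(2) hold, and what remains is to show that, for skew-symmetric $r$, relation \eqref{eq: 1 nearly asso bialgebra} is equivalent to Conditions~(3)--(4) and relation \eqref{eq: 2 nearly asso bialgebra} to Conditions~(5)--(6).

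For this I would substitute $\Delta_r(x)=(\rmL_x\otimes\mbox{I}-\mbox{I}\otimes\rmR_x)(r)$ into each of the four tensor equalities contained in \eqref{eq: 1 nearly asso bialgebra}--\eqref{eq: 2 nearly asso bialgebra}, commute every twist $\tau$ past the operators via $\tau(S\otimes T)=(T\otimes S)\tau$, and then use skew-symmetry $\tau(r)=-r$ to bring all terms onto a single copy of $r$. Collecting the result into its ``$(\cdot)\otimes\mbox{I}$'', ``$\mbox{I}\otimes(\cdot)$'' and mixed pieces, the one-sided operator products are simplified with the $NAL$-identities of Proposition~\ref{pro:LR cond NAL} and Remark~\ref{rmk: NAL quasi-comm}; the combinations that carry the computation are
\[
\rmL_{x\cdot y}-\rmR_y\rmR_x=-\rmL_y\ad_x=\ad_y\rmR_x,\qquad
\rmR_y\rmR_x-\rmR_x\rmL_y=-\rmR_x\ad_y=\rmL_x\ad_y,\qquad
\rmL_{x\cdot y}-\rmL_{y\cdot x}=\rmR_{y\cdot x}-\rmR_{x\cdot y}=\ad_{x\cdot y},
\]
each of which follows at once from $(xy)z=y(zx)=z(yx)$ and $x(yz)=y(xz)$.

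After this simplification the two equalities making up \eqref{eq: 1 nearly asso bialgebra} both take the shape
\[
\big(\ad_y\rmR_x\otimes\mbox{I}-\mbox{I}\otimes\rmR_x\ad_y\big)(r)=\pm\big(\rmL_x\otimes\rmR_y-\rmR_x\otimes\rmL_y\big)(r),
\]
one with each sign; subtracting them and using that $\mathbb{K}$ has characteristic different from $2$ forces each side to vanish, which is exactly Conditions~(4) and (3). In the same way the two equalities making up \eqref{eq: 2 nearly asso bialgebra} become
\[
\big(\mbox{I}\otimes\mbox{I}\pm\tau\big)\big((\rmL_y\otimes\rmL_x-\rmR_y\otimes\rmR_x)(r)\big)\ \pm\ \big((\ad_{x\cdot y}\otimes\mbox{I})(r)+(\mbox{I}\otimes\ad_{x\cdot y})(r)\big)=0
\]
with correlated signs. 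Because $\tau(r)=-r$, the term $(\mbox{I}\otimes\mbox{I}+\tau)(\,\cdot\,)$ is fixed by $\tau$ while $(\ad_{x\cdot y}\otimes\mbox{I})(r)+(\mbox{I}\otimes\ad_{x\cdot y})(r)=(\mbox{I}\otimes\mbox{I}-\tau)(\ad_{x\cdot y}\otimes\mbox{I})(r)$ is sent to its negative by $\tau$, so the ``$+$'' equality splits into Condition~(6) and the vanishing of $(\mbox{I}\otimes\mbox{I}+\tau)\big((\rmL_y\otimes\rmL_x-\rmR_y\otimes\rmR_x)(r)\big)$; feeding Condition~(6) into the ``$-$'' equality and adding the two then yields $2(\rmL_y\otimes\rmL_x-\rmR_y\otimes\rmR_x)(r)=0$, which is Condition~(5). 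The converse implications are immediate, and combining the three reductions gives the claimed equivalence.

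I expect the main obstacle to be purely organizational: each of the four equalities expands into eight tensor summands, and one must apply the right $NAL$-identity to every one-sided product while keeping track of the signs produced by $\tau(r)=-r$. The only genuinely conceptual ingredients are the $\tau$-eigenspace splitting used to extract Conditions~(5) and (6) from \eqref{eq: 2 nearly asso bialgebra}, and the ``two opposite signs force zero'' argument used to extract Conditions~(3) and (4) from \eqref{eq: 1 nearly asso bialgebra}.
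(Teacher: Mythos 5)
Your proposal is correct and takes essentially the same route as the paper's proof: reduce via Theorem~\ref{teo: nearly ass bialgebras comultiplication} together with Propositions~\ref{pro: nearly coass coalg} and~\ref{pro: L-coalg, R-coalg r-matrices} to matching the compatibility relations \eqref{eq: 1 nearly asso bialgebra}--\eqref{eq: 2 nearly asso bialgebra} against Conditions (3)--(6), then expand $\Delta_r$, push $\tau$ through, and simplify with $\tau(r)=-r$ and the $NAL$ operator identities of Proposition~\ref{pro:LR cond NAL}. The only difference is organizational (and your claimed reduced shapes and operator identities do check out): the paper derives (3) and (5) by directly comparing two of the three expressions in each chain and then obtains (4) and (6) assuming them, whereas you subtract correlated-sign equations and split by $\tau$-eigenspaces --- the same computation repackaged.
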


\begin{proof}
By Theorem \ref{teo: nearly ass bialgebras comultiplication}, it is enough to prove that the Conditions (1)-(6) are satisfied if and only if  $(\A, \Delta_r)$ is a nearly coassociative $L$-coalgebra, and the Conditions \eqref{eq: 1 nearly asso bialgebra} and \eqref{eq: 2 nearly asso bialgebra} hold. 

By Propositions \ref{pro: nearly coass coalg} and \ref{pro: L-coalg, R-coalg r-matrices}  $(\A, \Delta_r)$ is a nearly coassociative $L$-coalgebra if and only if Conditions (1) and (2) are sati\-sfied. Thus, we have just to prove that the Conditions (3)-(6) are satisfied if and only if the Conditions \eqref{eq: 1 nearly asso bialgebra} and \eqref{eq: 2 nearly asso bialgebra} hold.

Let $x,y\in \A$ and suppose that the Conditions \eqref{eq: 1 nearly asso bialgebra} and \eqref{eq: 2 nearly asso bialgebra} are satisfied. 
By definition of $\Delta_r$ and since $\tau(r)=-r$, we have that
\begin{align*}
    (\mbox{I} \otimes \rmR_y) \Delta_r(x)+ \tau(\mbox{I} \otimes \rmR_x)\Delta_r (y)
    &=\sum_{i=1}^n \Big((x\cdot a_i) \otimes( b_i \cdot y)  -a_i \otimes \big(( b_i \cdot x)\cdot y \big) 
    -  (a_i\cdot x) \otimes (y\cdot b_i) + \big(( a_i \cdot y)\cdot x \big) \otimes b_i\Big)
\end{align*}
and
     \begin{align*} 
         \tau(\rmL_y \otimes \mbox{ I}) \Delta_r (x) + (\rmL_x \otimes \mbox{ I}) \Delta_r (y)
     &= \sum_{i=1}^n \Big(-a_i \otimes \big(y\cdot (x\cdot b_i)\big)  + ( a_i \cdot x) \otimes (y \cdot b_i) 
    +\big(x \cdot (y\cdot a_i) \big)\otimes b_i  -(x \cdot a_i) \otimes ( b_i \cdot y) 
    \Big).
    \end{align*}
Since $\A$ is an $NAL$-algebra, we have
\begin{align*}
    (\mbox{I} \otimes \rmR_y) \Delta_r(x)+ \tau(\mbox{I} \otimes \rmR_x)\Delta_r (y)=\tau(\rmL_y \otimes \mbox{ I}) \Delta_r (x) + (\rmL_x \otimes \mbox{ I}) \Delta_r (y) \;
    &\Longleftrightarrow\;
    2\sum_{i=1}^n \big((x\cdot a_i) \otimes( b_i \cdot y)  
    -  (a_i\cdot x) \otimes (y\cdot b_i)\big)=0 \\ &\Longleftrightarrow\; 2(\rmL_x \otimes \rmR_y - \rmR_x \otimes \rmL_y)(r)=0. 
\end{align*}
Since the characteristic of $\mathbb{K}$ is different from $2$, Condition (3) is satisfied.

Recall that by Remark \ref{rmk: NAL-alg is R-alg} $\A$ is also $\rmR$-algebra. Now, since
    $\Delta_{r} (x \cdot y)
    = \sum_{i=1}^n \Big( \big(y\cdot (a_i \cdot x)\big) \otimes b_i  -a_i \otimes \big((y\cdot b_i)\cdot x\big) \Big)$,
by Condition (3) and using Proposition \ref{pro:LR cond NAL} it follows that
\begin{align*}
    &\Delta_{r} (x \cdot y)=(\mbox{I} \otimes \rmR_y) \Delta_r(x)+ \tau(\mbox{I} \otimes \rmR_x)\Delta_r (y)\\
    &\; \Longleftrightarrow\;  \sum_{i=1}^n \Big( \big(y\cdot (a_i \cdot x)\big) \otimes b_i  -a_i \otimes \big((y\cdot b_i)\cdot x \big)  +a_i \otimes \big(( b_i \cdot x)\cdot y \big) - \big(( a_i \cdot y)\cdot x \big) \otimes b_i\Big)=0\\
 &\; \Longleftrightarrow\;  \sum_{i=1}^n \Big( \big(\big(y\cdot (a_i \cdot x)\big) - \big(( a_i \cdot y)\cdot x \big)\big) \otimes b_i
 -a_i \otimes \big(\big((y\cdot b_i)\cdot x \big) - \big(( b_i \cdot x)\cdot y \big)\big) \Big)=0\\
  &\;\Longleftrightarrow\;  \big(( \rmL_y \rmR_x - \rmR_x \rmR_y)\otimes \mbox{I} - \mbox{I} \otimes ( \rmR_x\rmL_y - \rmR_x \rmR_y)\big)(r)=0\\
  &\;\Longleftrightarrow\;  \big(( \rmL_y \rmR_x - \rmR_y \rmR_x)\otimes \mbox{I} - \mbox{I} \otimes ( \rmR_x\rmL_y - \rmR_x \rmR_y)\big)(r)=0\\
    &\;\Longleftrightarrow\;  (\ad_y \rmR_x  \otimes \mbox{I} - \mbox{I} \otimes  \rmR_x \ad_y  )(r)=0.
\end{align*}
Thus, Condition (4) holds.

Now, by the definition of $\Delta_r$ and since $\tau(r)=-r$, it follows that
\begin{align*}
    (\rmR_y \otimes \mbox{ I}) \Delta_r(x) + (\mbox{I} \otimes \rmL_x) \Delta_r(y) 
    =\sum_{i=1}^n \Big(((x\cdot a_i)\cdot y) \otimes b_i  -(a_i \cdot y)\otimes ( b_i \cdot x) 
    + (y\cdot a_i) \otimes (x \cdot b_i)  -a_i \otimes ( x\cdot ( b_i \cdot y))
    \Big),
\end{align*}
\begin{align*} 
         \tau ( \mbox{I} \otimes \rmL_y) \Delta_r(x) +\tau (\rmR_x \otimes \mbox{ I}) \Delta_r (y)
     &=\sum_{i=1}^n \Big((y \cdot ( a_i \cdot x) )\otimes b_i - ( y \cdot a_i )\otimes (x\cdot b_i) + ( a_i \cdot y)\otimes (b_i \cdot x) - a_i \otimes ((y\cdot b_i) \cdot x )
     \Big),
    \end{align*}
and
\begin{align*}
     ( \mbox{I} \otimes \rmL_y) \Delta_r(x)+ (\rmR_x \otimes \mbox{ I}) \Delta_r (y) 
     &= \sum_{i=1}^n \Big((x\cdot a_i) \otimes( y \cdot b_i ) -a_i \otimes (y \cdot ( b_i \cdot x) )
     + ((y\cdot a_i) \cdot x )\otimes b_i  -(a_i \cdot x)\otimes ( b_i \cdot y)
     \Big).
\end{align*}
Since $\A$ is an $NAL$-algebra, it follows that
\begin{align*}
     (\rmR_y \otimes \mbox{ I}) \Delta_r(x) + (\mbox{I} \otimes \rmL_x) \Delta_r(y)=\tau ( \mbox{I} \otimes \rmL_y) \Delta_r(x) +\tau (\rmR_x \otimes \mbox{ I}) \Delta_r (y)\; &\Longleftrightarrow \; 2\sum_{i=1}^n \Big( (y\cdot a_i) \otimes (x \cdot b_i)  - ( a_i \cdot y)\otimes (b_i \cdot x) 
     \Big)=0 \\
     &\Longleftrightarrow \; 2(\rmL_y \otimes \rmL_x - \rmR_y \otimes \rmR_x)(r)=0.
    \end{align*}
Since the characteristic of $\mathbb{K}$ is different from $2$, Condition (5) is satisfied.

Now, by Condition (5) and using Proposition \ref{pro:LR cond NAL} we have
\begin{align*}
    & \tau ( \mbox{I} \otimes \rmL_y) \Delta_r(x) +\tau (\rmR_x \otimes \mbox{ I}) \Delta_r (y)= ( \mbox{I} \otimes \rmL_y) \Delta_r(x)+ (\rmR_x \otimes \mbox{ I}) \Delta_r (y)\\
    &\Longleftrightarrow \; \sum_{i=1}^n \Big( \big(y \cdot  ( a_i \cdot x)   
     -  (y\cdot a_i) \cdot x  \big)\otimes b_i
    + a_i \otimes \big( y \cdot ( b_i \cdot x)   - (y\cdot b_i) \cdot x  \big)
     \Big)=0\\
    &\Longleftrightarrow \; \big( (\rmL_y \rmR_x- \rmR_x \rmL_y) \otimes \mbox{I} + \mbox{I}\otimes ( \rmL_y \rmR_x- \rmR_x \rmL_y )\big)(r)=0\\
    &\Longleftrightarrow \; (\mbox{I} \otimes \mbox{I} -\tau)( (\rmL_y \rmR_x- \rmR_x \rmL_y) \otimes \mbox{I} )(r)=0\\
        &\Longleftrightarrow \; (\mbox{I} \otimes \mbox{I} -\tau)\big( (\rmL_{x \cdot y}  - \rmR_{x\cdot y}  ) \otimes \mbox{I} \big)(r)=0\\
     &\Longleftrightarrow \; (\mbox{I} \otimes \mbox{I} -\tau)(  \ad_{x\cdot y}  \otimes \mbox{I} )(r)=0.
\end{align*}
Thus, Condition (6) is satisfied. So, we proved that if the Conditions \eqref{eq: 1 nearly asso bialgebra} and \eqref{eq: 2 nearly asso bialgebra} are satisfied, then the Conditions (3)-(6) hold. The converse is proven with similar arguments, and this completes the proof.
\end{proof}

\begin{exa}\label{ex: NA-bialgebra dim 6 2,6}
     Let $(\A, \cdot)$ be the six dimensional nearly associative $L$-algebra defined in Example \ref{ex: NAL-algebra}, and consider $r_{2,6}=e_2 \otimes e_6 - e_6 \otimes e_2\in \A \otimes \A$. If $\Delta_{r_{2,6}}$ is the comultiplication defined by \eqref{eq:comultiplication r}, then by Examples \ref{ex: Nearly coalgebra r_2,6} and \ref{ex: L-coalgebra r_2,6} Conditions (1) and (2) of Theorem \ref{teo: coboundary NALR-algebra} are satisfied. Also, since $e_i \cdot e_6 = e_6 \cdot e_i =0$ for all $1\leq i \leq 6$, Conditions (3) and (5) of Theorem \ref{teo: coboundary NALR-algebra} hold. However,
\begin{align*}
 (\ad_{e_2} \rmR_{e_1}  \otimes \mbox{I} - \mbox{I} \otimes  \rmR_{e_1} \ad_{e_2}  )(r_{2,6})&=  
 \big(e_2\cdot (e_2 \cdot e_1)\big) \otimes e_6
  - \big(( e_2 \cdot e_1)\cdot e_2 \big) \otimes e_6
\\
 &=(e_2\cdot e_4) \otimes e_6
  - (e_4\cdot e_2 ) \otimes e_6
 =e_6 \otimes e_6\neq 0.
\end{align*}
Thus, Condition (4) of Theorem \ref{teo: coboundary NALR-algebra} does not hold, and as a consequence $(\A, \cdot, \Delta_{r_{2,6}})$ is not a nearly associative bialgebra.
\end{exa}

 \begin{defi} \label{def: LR(r)}
The equation 
$$\mathbf{LR}(r) = r_{12}r_{23}-r_{13}r_{12}-r_{23}r_{13}=0$$ 
is called \textit{$LR$-Yang-Baxter equation}  (or just {\it LRYBE } to abbreviate). If $ r \in A\otimes A$ satisfies $\mathbf{LR}(r)=0$, then $r$ is called the \textit{solution} of LRYBE, or \textit{$r$-matrix} of $A$. 
\end{defi}

\begin{exa}
     Let $(\A, \cdot, \Delta_\A)$ be the four dimensional coboundary nearly associative bialgebra of Example \ref{ex: NA-bilagebra dim 4}. Then $r=e_1 \otimes e_4 - e_4 \otimes e_1\in \A \otimes \A$ is a solution of LRYBE of $\A$. 
\end{exa}

\begin{exa}
    Let $(\A, \cdot, \Delta_\A)$ be the six dimensional coboundary nearly associative bialgebra of Example \ref{ex: NA-bialgebra dim 6}. Then $r_{5,6}=e_5 \otimes e_6 - e_6 \otimes e_5\in \A \otimes \A$  is a solution of LRYBE of $\A$. 
\end{exa}

\begin{exa}
     Let $(\A, \cdot)$ be the six dimensional nearly associative $L$-algebra defined in Example \ref{ex: NAL-algebra}, and consider $r_{2,6}=e_2 \otimes e_6 - e_6 \otimes e_2\in \A \otimes \A$. Then
     $$
     \mathbf{LR}(r_{2,6}) = - e_6 \otimes e_5 \otimes e_6 - e_5 \otimes e_6 \otimes e_6 - e_6 \otimes e_6 \otimes e_5\neq 0,
     $$
     and consequently $r_{2,6}$ is not a solution of LRYBE of $\A$. 
\end{exa}

We have the following immediate consequence of Theorem \ref{teo: coboundary NALR-algebra}.

\begin{cor}
Let $(\A, \cdot)$ be a nearly associative algebra.
If $r\in \A\otimes \A$ is a skew-symmetric solution of LRYBE  and $\Delta_r$ is the comultiplication defined by \eqref{eq:comultiplication r}, then $(\A, \Delta_r)$ is a $LR$-coalgebra.
\end{cor}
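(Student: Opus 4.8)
The plan is to check the two defining identities of an $LR$-coalgebra one at a time, each time appealing to the corresponding half of Proposition~\ref{pro: L-coalg, R-coalg r-matrices}. That is, I would first establish that $(\A,\Delta_r)$ is an $L$-coalgebra and then, by exactly the same scheme, that it is an $R$-coalgebra; by Definition~\ref{def: LR coalgebras} these two facts together are precisely the assertion. I read the hypothesis as ``$\A$ is a nearly associative $L$-algebra'' (the standing setting of this section and the context of Theorem~\ref{teo: coboundary NALR-algebra}), so that Proposition~\ref{pro: L-coalg, R-coalg r-matrices} is directly applicable.

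For the $L$-coalgebra statement: since $r$ is skew-symmetric, Proposition~\ref{pro: L-coalg, R-coalg r-matrices}(1) tells us that $(\A,\Delta_r)$ is an $L$-coalgebra if and only if
\[
\big((\rmL_x \otimes \mbox{I} \otimes \mbox{I}) - (\tau \otimes \mbox{I})(\rmL_x \otimes \mbox{I} \otimes \mbox{I})\big)\,\mathbf{LR}(r) = 0 \qquad \text{for all } x \in \A .
\]
By hypothesis $r$ is a solution of the LRYBE in the sense of Definition~\ref{def: LR(r)}, i.e., $\mathbf{LR}(r)=0$ in $\A\otimes\A\otimes\A$, so the left-hand side is the image of the zero tensor under a linear operator and therefore vanishes; hence $(\A,\Delta_r)$ is an $L$-coalgebra. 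The argument for the $R$-coalgebra statement is identical, now using Proposition~\ref{pro: L-coalg, R-coalg r-matrices}(2), which reduces it to $\big((\mbox{I}\otimes \mbox{I}\otimes\rmR_x) - (\mbox{I}\otimes \tau)(\mbox{I}\otimes \mbox{I}\otimes\rmR_x)\big)\mathbf{LR}(r)=0$, again immediate from $\mathbf{LR}(r)=0$. Being simultaneously an $L$-coalgebra and an $R$-coalgebra, $(\A,\Delta_r)$ is an $LR$-coalgebra.

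There is essentially no obstacle, since Proposition~\ref{pro: L-coalg, R-coalg r-matrices} has already done the real work; the only point worth double-checking is exactly where that proposition uses left-commutativity, since this is what pins down the hypothesis one needs. Tracing its proof, the raw $L$-coalgebra condition for $\Delta_r$ equals $\big((\rmL_x \otimes \mbox{I} \otimes \mbox{I}) - (\tau \otimes \mbox{I})(\rmL_x \otimes \mbox{I} \otimes \mbox{I})\big)\mathbf{LR}(r)$ plus the residual term $\sum_{i,j} a_j \otimes a_i \otimes \big(b_i\cdot(b_j\cdot x) - b_j\cdot(b_i\cdot x)\big)$, and this residual term is annihilated precisely by left-commutativity (equivalently, by Lemma~\ref{lem:R-L}, by right-commutativity), whereas skew-symmetry of $r$ together with nearly associativity alone do not make it vanish. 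So the genuine content of the corollary is just the observation that the single element $\mathbf{LR}(r)$ governs both coalgebra identities, and that a skew-symmetric solution of the LRYBE annihilates it.
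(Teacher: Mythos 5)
Your proof is correct and is essentially the paper's (the corollary is stated there without proof as an immediate consequence): both coalgebra identities reduce, via Proposition~\ref{pro: L-coalg, R-coalg r-matrices}, to applying a linear operator to $\mathbf{LR}(r)$, which vanishes by hypothesis. Your side remark correctly identifies that the proposition — and hence the corollary as you prove it — needs $\A$ to be a nearly associative $L$-algebra (the residual term $\sum_{i,j} a_i\otimes a_j\otimes\big(b_j\cdot(b_i\cdot x)-b_i\cdot(b_j\cdot x)\big)$ is killed exactly by left-commutativity), so reading the hypothesis that way is the right call.
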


\begin{cor}
Let $(\A, \cdot)$ be a nearly associative $L$-algebra.
If $r\in \A\otimes \A$ is a skew-symmetric solution of LRYBE  and $\Delta_r$ is the comultiplication defined by \eqref{eq:comultiplication r}, then $(\A, \cdot, \Delta_r)$ is a coboundary nearly associative bialgebra if and only if  the following assertions are satisfied:
\begin{enumerate}
\item  $\Big( (m (\ad_x \otimes \mbox{I}) \otimes \mbox{I} \otimes \mbox{I} + m \otimes \mbox{I} \otimes \ad_x)(\mbox{I} \otimes \xi)- (\mbox{I} \otimes m (\mbox{I} \otimes \ad_x) \otimes \mbox{I} + \mbox{I} \otimes m \otimes \ad_x)\Big)(r \otimes r)=0$,

\vspace{1mm}
\item $(\rmL_x \otimes \rmR_y- \rmR_x \otimes \rmL_y)(r)=0$,

\vspace{1mm}
\item $
  (\ad_y \rmR_x  \otimes \mbox{I} - \mbox{I} \otimes  \rmR_x \ad_y  )(r)=0$,

  \vspace{1mm}
\item $(\rmL_y \otimes \rmL_x- \rmR_y \otimes \rmR_x)(r)=0$,

\vspace{1mm}
\item $(\mbox{I} \otimes \mbox{I} -\tau)(  \ad_{x\cdot y}  \otimes \mbox{I} )(r)=0$,
  \end{enumerate}
 for all $ x,y\in \A$.
\end{cor}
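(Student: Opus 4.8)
The plan is to deduce this corollary directly from Theorem \ref{teo: coboundary NALR-algebra}, with essentially no extra work. Recall that the theorem characterizes, for a nearly associative $L$-algebra $(\A,\cdot)$ and a skew-symmetric $r\in\A\otimes\A$ with $\Delta_r$ given by \eqref{eq:comultiplication r}, the property that $(\A,\cdot,\Delta_r)$ be a coboundary nearly associative bialgebra in terms of its six conditions (1)--(6). The first of these is
\[
\big((\rmL_x \otimes \mbox{I} \otimes \mbox{I})- (\tau \otimes \mbox{I})(\rmL_x \otimes \mbox{I} \otimes \mbox{I})\big)\, \mathbf{LR}(r)=0, \qquad \mbox{for all } x\in\A .
\]
First I would observe that, by hypothesis, $r$ is a solution of the LRYBE, that is $\mathbf{LR}(r)=0$ (Definition \ref{def: LR(r)}). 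Since the expression on the left-hand side above is obtained by applying a fixed linear map to $\mathbf{LR}(r)$, it vanishes identically; hence condition (1) of Theorem \ref{teo: coboundary NALR-algebra} is automatically fulfilled under the present hypotheses and imposes no restriction.

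Consequently, Theorem \ref{teo: coboundary NALR-algebra} reduces, in this setting, to the assertion that $(\A,\cdot,\Delta_r)$ is a coboundary nearly associative bialgebra if and only if its conditions (2)--(6) hold. It then remains only to match these up with the statement of the corollary: condition (2) of the theorem is condition (1) here, (3) is (2), (4) is (3), (5) is (4), and (6) is (5) --- all verbatim. This yields the claimed equivalence. No genuine obstacle arises; the only care needed is the bookkeeping identifying the enumerated conditions, together with the observation that being a solution of the LRYBE is precisely what removes condition (1) of the theorem.
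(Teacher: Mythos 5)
Your proposal is correct and is exactly the argument the paper intends (the corollary is stated as an immediate consequence of Theorem \ref{teo: coboundary NALR-algebra} with no further proof given): the hypothesis $\mathbf{LR}(r)=0$ makes condition (1) of the theorem vacuous, since it is a fixed linear map applied to $\mathbf{LR}(r)$, and the remaining conditions (2)--(6) are verbatim the five conditions of the corollary.
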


Now, we associate to $r = \sum_{i=1}^n a_i \otimes b_i \in A \otimes A$ the 
linear map $\displaystyle  \mathcal{R}: A^\ast \longrightarrow
A$ defined by, for  $f \in A^\ast$,
\begin{eqnarray} \displaystyle && \mathcal{R} (f)  := (f\otimes 1)(r) = \sum_{i=1}^n f(a_i)
b_i.
 \label{eq: R}
\end{eqnarray}

\begin{rmk}\label{rmk: r skew-symmetric R}
    $\displaystyle r$ is skew-symmetric if and only if  $\mathcal{R}(f)= -\sum_{i=1}^n f(b_i)a_i$ for all  $f \in A^\ast$. 
\end{rmk}

\begin{pro}
    Let $(\A, \cdot)$  be a nearly associative  $L$-algebra. Then $r\in \A\otimes \A$ is a skew-symmetric  solution of LRYBE if and only if 
 the linear map $\mathcal{R}$ defined by  \eqref{eq: R} verifies $\displaystyle \mathcal{R}(f)= -\sum_{i=1}^n f(b_i)a_i$ and
    \begin{equation}\label{eq: R LRYBE}
        \langle f, \mathcal{R}(h) \mathcal{R}(g) \rangle + \langle g, \mathcal{R}(f) \mathcal{R}(h) \rangle+ \langle h, \mathcal{R}(g) \mathcal{R}(f) \rangle=0,
    \end{equation}
for all  $f,g,h \in \A^\ast$.
\end{pro}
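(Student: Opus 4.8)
The plan is to reduce the whole statement to pairing $\mathbf{LR}(r)\in\A\otimes\A\otimes\A$ against arbitrary triples $f\otimes g\otimes h\in\A^\ast\otimes\A^\ast\otimes\A^\ast$, using only the definition \eqref{eq: R} of $\mathcal{R}$, bilinearity of the product, and skew-symmetry of $r$. The first half is free: the equality $\mathcal{R}(f)=-\sum_{i=1}^n f(b_i)a_i$ is, by Remark \ref{rmk: r skew-symmetric R}, exactly equivalent to $r$ being skew-symmetric, so from now on I would assume $r$ skew-symmetric and freely use both $\mathcal{R}(f)=\sum_i f(a_i)b_i$ and $\mathcal{R}(f)=-\sum_i f(b_i)a_i$.

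Next I would evaluate $\langle f\otimes g\otimes h,\mathbf{LR}(r)\rangle$ termwise. For $r_{12}r_{23}=\sum_{i,j}a_i\otimes(b_i\cdot a_j)\otimes b_j$, collapsing the $i$-sum via $\sum_i f(a_i)(b_i\cdot a_j)=\mathcal{R}(f)\cdot a_j$ gives $\langle f\otimes g\otimes h,r_{12}r_{23}\rangle=\langle\varphi\otimes h,r\rangle$ with $\varphi:=\langle g,\mathcal{R}(f)\cdot(-)\rangle\in\A^\ast$; applying skew-symmetry ($r=-\tau(r)$) turns this into $-\varphi(\mathcal{R}(h))=-\langle g,\mathcal{R}(f)\mathcal{R}(h)\rangle$. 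For $r_{13}r_{12}=\sum_{i,j}(a_i\cdot a_j)\otimes b_j\otimes b_i$, using $\sum_i h(b_i)a_i=-\mathcal{R}(h)$ and $\sum_j g(b_j)a_j=-\mathcal{R}(g)$ gives $\langle f\otimes g\otimes h,r_{13}r_{12}\rangle=\langle f,\mathcal{R}(h)\mathcal{R}(g)\rangle$. For $r_{23}r_{13}=\sum_{i,j}a_j\otimes a_i\otimes(b_i\cdot b_j)$, using $\sum_i g(a_i)b_i=\mathcal{R}(g)$ and $\sum_j f(a_j)b_j=\mathcal{R}(f)$ gives $\langle f\otimes g\otimes h,r_{23}r_{13}\rangle=\langle h,\mathcal{R}(g)\mathcal{R}(f)\rangle$. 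Assembling the three pieces with the signs from $\mathbf{LR}(r)=r_{12}r_{23}-r_{13}r_{12}-r_{23}r_{13}$ yields
$$\langle f\otimes g\otimes h,\mathbf{LR}(r)\rangle=-\Big(\langle f,\mathcal{R}(h)\mathcal{R}(g)\rangle+\langle g,\mathcal{R}(f)\mathcal{R}(h)\rangle+\langle h,\mathcal{R}(g)\mathcal{R}(f)\rangle\Big).$$

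Finally, since the canonical pairing between $\A\otimes\A\otimes\A$ and $\A^\ast\otimes\A^\ast\otimes\A^\ast$ is non-degenerate, $\mathbf{LR}(r)=0$ holds if and only if the right-hand side above vanishes for all $f,g,h\in\A^\ast$, which is precisely equation \eqref{eq: R LRYBE}; combining this with the skew-symmetry equivalence from Remark \ref{rmk: r skew-symmetric R} completes the proof. The only delicate point is the bookkeeping — keeping straight which functional lands in which tensor slot and systematically invoking skew-symmetry to trade $\sum_i f(b_i)a_i$ for $-\mathcal{R}(f)$ as opposed to $\sum_i f(a_i)b_i=\mathcal{R}(f)$; no further structural hypothesis on $\A$ beyond what is already in force is actually consumed, the skew-symmetry of $r$ carrying the whole argument.
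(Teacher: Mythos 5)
Your proof is correct and follows essentially the same route as the paper: pair $\mathbf{LR}(r)$ against $f\otimes g\otimes h$, use skew-symmetry to trade $\sum_i f(b_i)a_i$ for $-\mathcal{R}(f)$, and identify the result with the cyclic sum in \eqref{eq: R LRYBE}. Your explicit appeal to non-degeneracy of the canonical pairing to get the biconditional in one stroke is a slight tidying of the paper's ``the converse is proved similarly,'' but it is not a different argument.
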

\begin{proof}
 First, assume that $ r = \sum_{i=1}^n a_i \otimes b_i \in \A \otimes \A$ is a skew-symmetric  solution of LRYBE, and 
let  $\mathcal{R}$ be the linear map defined by  \eqref{eq: R}. Since $\displaystyle r$ is skew-symmetric,
by Remark \ref{rmk: r skew-symmetric R} it remains to prove that Condition \eqref{eq: R LRYBE} holds.
To this end, notice that, as $r$ is a solution of LRYBE, then $\mathbf{LR}(r)=  
r_{12}r_{23}-r_{13}r_{12}-r_{23}r_{13}=0 $, which means,
\begin{align*}
 \sum_{i,j=1}^n \Big( a_i \otimes  (b_i \cdot a_j ) \otimes b_j - (a_i\cdot a_j)
\otimes b_j \otimes b_i -  a_j \otimes
a_i\otimes (b_i \cdot b_j) \Big)=0.
\end{align*}
As a consequence, for all $f,g,h\in \A^*$, we have 
 \begin{align*}
     &  \sum_{i,j=1}^n 
 \langle  f \otimes g \otimes h,  a_i \otimes  (b_i \cdot a_j ) \otimes b_j - (a_i\cdot a_j)
\otimes b_j \otimes b_i -  a_j \otimes
a_i\otimes (b_i \cdot b_j) \rangle= 0\\
&\Longrightarrow \sum_{i,j=1}^n \Big(
\langle g, f(a_i) h(b_j )  (b_i \cdot a_j ) \rangle - \langle f, g(b_j) h(b_i)(a_i\cdot a_j)\rangle
- \langle h, f(a_j) g(
a_i) (b_i \cdot b_j) \rangle\Big)= 0
 \end{align*} 
Thus, since $r$ is skew-symmetric, by Remark \ref{rmk: r skew-symmetric R} it follows that, for all $f,g,h\in \A^*$,
\begin{align*}
 & -\langle g, \mathcal{R}(f) \mathcal{R}(h)\rangle - \langle f, \mathcal{R}(h)\mathcal{R}(g)\rangle - \langle h, \mathcal{R}(g)\mathcal{R}(f)\rangle= 0,
 \end{align*}
which proves Condition \eqref{eq: R LRYBE}.
The converse is proved similarly, completing the proof. 
\end{proof}
\begin{rmk}
    We usually write the cyclic Equation \ref{eq: R LRYBE} in the following concise form
     \begin{equation*}
        \sum_{cycl}\langle f, \mathcal{R}(g) \mathcal{R}(h) \rangle=0,
    \end{equation*}
for all  $f,g,h \in \A^\ast$.
\end{rmk}

Next, we rewrite the conditions of Theorem \ref{teo: coboundary NALR-algebra} in terms of the map $\mathcal{R}$.


\begin{pro}
Let $(\A, \cdot)$  be a nearly associative  $L$-algebra, $r \in \A \otimes \A$ be skew-symmetric and $\mathcal{R}$ be the linear map defined by  \eqref{eq: R}. If $\Delta_r$ is the comultiplication defined by \eqref{eq:comultiplication r}, then the triple $(\A,\cdot, \Delta_r)$ is a nearly associative bialgebra if and only if  the following conditions are satisfied:
\begin{enumerate}
\item $ \displaystyle \sum_{\sigma \in S_2} \sgn (\sigma) \,\Big(\Bigl\langle f_{\sigma(1)}, \mathcal{R}(\rmL_x^\ast (f_{\sigma(2)})) \mathcal{R}(f_3) \Bigr\rangle
+\Bigl\langle \rmL_x^\ast (f_{\sigma(2)}), \mathcal{R}(f_3) \mathcal{R}(f_{\sigma(1)}) \Bigr\rangle
+\Bigl\langle f_3, \mathcal{R}(f_{\sigma(1)}) \mathcal{R}(\rmL_x^\ast (f_{\sigma(2)})) \Bigr\rangle \Big) =0$,
\vspace{1mm}

\item $\displaystyle   \Bigl\langle f_1, \ad_x(\calR(f_3))\calR(f_2) \Bigr\rangle  - \Bigl\langle f_1, \calR(\ad_x^\ast(f_3)) \calR(f_2) \Bigr\rangle  + \Bigl\langle f_2, \calR(f_1) \ad_x(\calR(f_3)) \Bigr\rangle - \Bigl\langle f_2, \calR(f_1) \calR(\ad_x^\ast (f_3)) \Bigr\rangle=0$,

\vspace{1mm}

\item $\displaystyle \Bigl\langle \rmL_y^\ast (f_2), \mathcal{R}(\rmR_x^\ast (f_1))  \Bigr\rangle
 -\Bigl\langle \rmR_y^\ast (f_2), \mathcal{R}(\rmL_x^\ast (f_1))  \Bigr\rangle= 0$,

 \vspace{1mm}

\item $\displaystyle \Bigl\langle f_1, \calR \big(\ad_y^\ast L_x^\ast (f_2)\big) \Bigr\rangle - \Bigl\langle f_2, \calR \big(\rmL_x^\ast \ad_y^\ast(f_1)\big) \Bigr\rangle  =0$,

\vspace{1mm}

\item $\displaystyle \Bigl\langle \rmR_x^\ast (f_2), \mathcal{R}(\rmR_y^\ast (f_1))  \Bigr\rangle
 -\Bigl\langle \rmL_x^\ast (f_2), \mathcal{R}(\rmL_y^\ast (f_1))  \Bigr\rangle= 0$,

\vspace{1mm}

\item $\displaystyle \sum_{\sigma \in S_2} \sgn (\sigma) \, \Bigl\langle f_{\sigma(1)}, \calR \big(\ad_{x\cdot y}^\ast(f_{\sigma(2)})\big) \Bigr\rangle =0$,
  \end{enumerate}
for all $ x,y \in \A$ and $\displaystyle f_1, f_2 ,f_3 \in \A^\ast $, where $S_2$ is the symmetric group of order $2$ and $\sgn(\sigma)$ is the sign of the permutation $\sigma\in S_2$.
\end{pro}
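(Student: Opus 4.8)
The plan is to obtain this statement as a dualization of Theorem~\ref{teo: coboundary NALR-algebra}. Since $r$ is skew-symmetric, the triple $(\A,\cdot,\Delta_r)$ being a nearly associative bialgebra is exactly the same as it being a \emph{coboundary} nearly associative bialgebra, which by Theorem~\ref{teo: coboundary NALR-algebra} holds if and only if conditions $(1)$--$(6)$ of that theorem are satisfied. Because $\A$ is finite-dimensional, $\A^\ast\otimes\A^\ast$ (respectively $\A^\ast\otimes\A^\ast\otimes\A^\ast$) separates the points of $\A\otimes\A$ (respectively $\A\otimes\A\otimes\A$), so each of those tensor identities is equivalent to the vanishing of all of its pairings against decomposable functionals. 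Hence the whole proof reduces to pairing condition $(k)$ of Theorem~\ref{teo: coboundary NALR-algebra} with $f_1\otimes f_2\otimes f_3$ for $k\in\{1,2\}$ and with $f_1\otimes f_2$ for $k\in\{3,4,5,6\}$, and checking by direct computation that the resulting scalar identity is precisely condition $(k)$ of the present proposition, the converse direction being the same computation read backwards.

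First I would set up a short translation dictionary. From the definition \eqref{eq: R} of $\mathcal R$ together with the skew-symmetry description in Remark~\ref{rmk: r skew-symmetric R}, one has $\langle r,f\otimes g\rangle=\langle g,\mathcal R(f)\rangle=-\langle f,\mathcal R(g)\rangle$ for all $f,g\in\A^\ast$; contracting $r_{12}r_{23}$, $r_{13}r_{12}$ and $r_{23}r_{13}$ against $f_1\otimes f_2\otimes f_3$ exactly as in the proof of the LRYBE criterion established just above yields a closed expression for $\langle\mathbf{LR}(r),f_1\otimes f_2\otimes f_3\rangle$ as a cyclic sum of terms $\langle f_i,\mathcal R(f_j)\mathcal R(f_k)\rangle$. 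One also records how transposes interact with the multiplication operators: under the dual-map convention \eqref{eq: def dual maps}, the transpose of $\rmL_x$ is $\rmR_x^\ast$, the transpose of $\rmR_x$ is $\rmL_x^\ast$, hence the transpose of $\ad_x=\rmL_x-\rmR_x$ is $\rmR_x^\ast-\rmL_x^\ast$, and $(ST)^\ast=T^\ast S^\ast$. Finally, dualizing the $NAL$-identities \eqref{LR1}--\eqref{LR2} of Proposition~\ref{pro:LR cond NAL} produces the operator relations among $\rmL_x^\ast$, $\rmR_x^\ast$, $\ad_x^\ast$ that are needed to bring each composite operator arising from the pairing into the precise form written in the proposition; for instance the relation $\rmR_x^\ast\ad_y^\ast=-\rmL_x^\ast\ad_y^\ast$ (together with its left-right mirror) falls out of $\rmL_{xy}=\rmR_{yx}$.

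With this dictionary, the six equivalences are mechanical. For condition $(1)$, pairing with $f_1\otimes f_2\otimes f_3$ transposes $\rmL_x\otimes\mbox{I}\otimes\mbox{I}$ onto $f_1$ (and onto $f_2$ after the $\tau$), and the closed formula for $\langle\mathbf{LR}(r),-\rangle$ converts the operator $\mbox{I}-(\tau\otimes\mbox{I})$ into the antisymmetrization $\sum_{\sigma\in S_2}\sgn(\sigma)$, giving condition $(1)$ of the proposition. Conditions $(3)$, $(5)$ and $(6)$, which live in $\A\otimes\A$, come out immediately: pairing with $f_1\otimes f_2$ and using $\langle r,f\otimes g\rangle=\langle g,\mathcal R(f)\rangle$ turns them into the stated scalar identities, with the factor $\mbox{I}\otimes\mbox{I}-\tau$ in $(6)$ again producing the $S_2$-antisymmetrization. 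Condition $(4)$ needs one extra step: the transposition produces $\rmR_x^\ast$ where the proposition has $\rmL_x^\ast$, and the dualized identity from Proposition~\ref{pro:LR cond NAL} is used to rewrite it. Condition $(2)$ is the most laborious: expanding $(r\otimes r)\in\A^{\otimes 4}$, applying the cycle map $\xi$ and the operators $m$ and $\ad_x$, and pairing against $f_1\otimes f_2\otimes f_3$ gives a double sum over the index pair $(i,j)$ consisting of four blocks; each block must be reassembled — pulling $\ad_x$ outside the sums and repeatedly using $\sum_i g(a_i)b_i=\mathcal R(g)$ and $\sum_i g(b_i)a_i=-\mathcal R(g)$ from skew-symmetry — into the four terms of condition $(2)$.

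I expect condition $(2)$ to be the main obstacle, not conceptually but in sheer volume: the $\xi$-twist on $r\otimes r$ spawns enough monomials that organizing them correctly, while simultaneously keeping the signs coming from the convention \eqref{eq: def dual maps} straight, is the delicate part. Once that single computation is carried out, the remaining five equivalences are routine index-chasing, and assembling them with the reduction to Theorem~\ref{teo: coboundary NALR-algebra} completes the argument.
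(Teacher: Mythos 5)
Your proposal follows essentially the same route as the paper: the paper's proof is precisely the reduction to Theorem~\ref{teo: coboundary NALR-algebra} followed by pairing each of its six tensor conditions against $f_1\otimes f_2\otimes f_3$ (for (1)–(2)) or $f_1\otimes f_2$ (for (3)–(6)) and rewriting via the skew-symmetry identity $\langle g,\mathcal R(f)\rangle=-\langle f,\mathcal R(g)\rangle$ and the dual-map conventions, exactly as you describe. The only cosmetic difference is that the paper carries out each contraction directly on the expanded sums $\sum_{i,j}$ rather than first recording a dualized version of the $NAL$-operator identities, but the computations coincide.
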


\begin{proof}
 Consider a skew-symmetric element $r = \sum_{i=1}^n a_i \otimes b_i \in \A \otimes \A$  and $\mathcal{R}$ the linear map defined by  \eqref{eq: R}.
First, we rewrite Condition (1) of Theorem \ref{teo: coboundary NALR-algebra} in terms of  $\mathcal{R}$.  Recall that
\begin{equation*}
\begin{split}
&\mathbf{LR}(r) =  \sum_{i,j=1}^n \Big( a_i \otimes  (b_i \cdot a_j ) \otimes b_j - (a_i\cdot a_j)
\otimes b_j \otimes b_i -  a_j \otimes
a_i\otimes (b_i \cdot b_j) \Big).
\end{split}
\end{equation*}
Hence, for  $ x \in \A$, 
\begin{equation*}
 \begin{split}
 & \big((\rmL_x \otimes \mbox{I} \otimes \mbox{I})- (\tau \otimes \mbox{I})(\rmL_x \otimes \mbox{I} \otimes \mbox{I})\big) \mathbf{LR}(r)=\\
 &\quad \quad \quad   = \sum_{i,j=1}^n
\Big( (x \cdot a_i) \otimes  (b_i \cdot a_j ) \otimes b_j - \big(x \cdot (a_i\cdot a_j)\big)
\otimes b_j \otimes b_i -  (x \cdot a_j) \otimes
a_i\otimes (b_i \cdot b_j) \Big) \\
& \quad \quad \quad \quad 
 + \sum_{i,j=1}^n
\Big( -(b_i \cdot a_j ) \otimes (x \cdot a_i) \otimes b_j + b_j \otimes \big(x \cdot (a_i\cdot a_j)\big)
\otimes  b_i +  a_i\otimes (x \cdot a_j) \otimes
 (b_i \cdot b_j) \Big).
\end{split}
\end{equation*}
Consequently,  since $r$ is skew-symmetric, by Remark \ref{rmk: r skew-symmetric R}, and taking into account  \eqref{eq: def dual maps}, for all $\displaystyle f, g ,h \in \A^\ast $, $x\in \A$, we have 
\begin{equation*}
 \begin{split}
  (f\otimes  g \otimes h) & \Big( \big((\rmL_x \otimes \mbox{I} \otimes \mbox{I})- (\tau \otimes \mbox{I})(\rmL_x \otimes \mbox{I} \otimes \mbox{I})\big) \mathbf{LR}(r) \Big) =\\
 = & \sum_{i,j=1}^n \Big( \Bigl\langle g, f(x \cdot a_i) h(b_j)  (b_i \cdot a_j )    \Bigr\rangle
-\Bigl\langle   f, g( b_j)h( b_i)\big(x \cdot (a_i\cdot a_j)\big)
     \Bigr\rangle
-\Bigl\langle   h,  f(x \cdot a_j)g(
a_i ) (b_i \cdot b_j)\Bigr\rangle \Big)\\
& + \sum_{i,j=1}^n \Big( - \Bigl\langle f,  g(x \cdot a_i) h(b_j ) (b_i \cdot a_j )   \Bigr\rangle
+\Bigl\langle  g,  f( b_j) h( b_i )  \big(x \cdot (a_i\cdot a_j)\big)
   \Bigr\rangle
+\Bigl\langle  h, f(   a_i)g (x \cdot a_j)  
 (b_i \cdot b_j)\Bigr\rangle  \Big) \\
 = & -\Bigl\langle g, \mathcal{R}(\rmR_x^\ast (f)) \mathcal{R}(h) \Bigr\rangle
-\Bigl\langle \rmR_x^\ast (f), \mathcal{R}(h) \mathcal{R}(g) \Bigr\rangle
-\Bigl\langle h, \mathcal{R}(g) \mathcal{R}(\rmR_x^\ast (f)) \Bigr\rangle\\
& + \Bigl\langle f, \mathcal{R}(\rmR_x^\ast (g)) \mathcal{R}(h) \Bigr\rangle
+\Bigl\langle \rmR_x^\ast (g), \mathcal{R}(h) \mathcal{R}(f) \Bigr\rangle
+\Bigl\langle h, \mathcal{R}(f) \mathcal{R}(\rmR_x^\ast (g)) \Bigr\rangle .
\end{split}
\end{equation*}
Then (1) of Theorem \ref{teo: coboundary NALR-algebra} is equivalent to the  condition
\begin{equation*}
 \begin{split}
 &   -\Bigl\langle g, \mathcal{R}(\rmL_x^\ast (f)) \mathcal{R}(h) \Bigr\rangle
-\Bigl\langle \rmL_x^\ast (f), \mathcal{R}(h) \mathcal{R}(g) \Bigr\rangle
-\Bigl\langle h, \mathcal{R}(g) \mathcal{R}(\rmL_x^\ast (f)) \Bigr\rangle\\
&  + \Bigl\langle f, \mathcal{R}(\rmL_x^\ast (g)) \mathcal{R}(h) \Bigr\rangle
+\Bigl\langle \rmL_x^\ast (g), \mathcal{R}(h) \mathcal{R}(f) \Bigr\rangle
+\Bigl\langle h, \mathcal{R}(f) \mathcal{R}(\rmL_x^\ast (g)) \Bigr\rangle =0.
\end{split}
\end{equation*}

Now we deal with Condition (2) of Theorem  \ref{teo: coboundary NALR-algebra}. For $x\in \A$, we have 
\begin{align*}
\Big( (m & (\ad_x \otimes \mbox{I}) \otimes \mbox{I} \otimes \mbox{I} + m \otimes \mbox{I} \otimes \ad_x)(\mbox{I} \otimes \xi)- (\mbox{I} \otimes m (\mbox{I} \otimes \ad_x) \otimes \mbox{I} + \mbox{I} \otimes m \otimes \ad_x)\Big)(r \otimes r)=\\
        =&\sum_{i,j=1}^n \Big(
       ((x\cdot a_i)\cdot a_j)\otimes b_j \otimes b_i 
         -  (( a_i \cdot x)\cdot a_j) \otimes b_j  \otimes b_i+(a_i\cdot a_j) \otimes b_j \otimes (x\cdot b_i) 
         -(a_i \cdot a_j) \otimes b_j \otimes ( b_i \cdot x) \Big)\\
         &  +\sum_{i,j=1}^n \Big( 
          a_j \otimes ( b_j \cdot ( a_i \cdot x))\otimes b_i 
          - a_j \otimes (b_j \cdot (x\cdot a_i))\otimes b_i+
          a_j \otimes (b_j \cdot a_i) \otimes ( b_i \cdot x)
        -  a_j \otimes ( b_j \cdot a_i) \otimes (x\cdot b_i)\Big).
\end{align*}
Consequently, for all $\displaystyle f, g ,h \in \A^\ast $, $x\in \A$, we have 
\begin{align*}
(f \otimes & g \otimes h) \Big(\Big( (m (\ad_x \otimes \mbox{I}) \otimes \mbox{I} \otimes \mbox{I} + m \otimes \mbox{I} \otimes \ad_x)(\mbox{I} \otimes \xi)- (\mbox{I} \otimes m (\mbox{I} \otimes \ad_x) \otimes \mbox{I} + \mbox{I} \otimes m \otimes \ad_x)\Big)(r \otimes r)\Big)=\\
     =&\sum_{i,j=1}^n \Big(
     \Bigl\langle f, g(b_j)h(b_i)((x\cdot a_i)\cdot a_j) \Bigr\rangle
         -  \Bigl\langle f, g(b_j)h(b_i)  (( a_i \cdot x)\cdot a_j) \Bigr\rangle
         + \Bigl\langle f, g(b_j)h(x\cdot b_i)(a_i\cdot a_j)  \Bigr\rangle
         - \Bigl\langle f,  g(b_j)h(b_i\cdot x)(a_i\cdot a_j) \Bigr\rangle \Big) \\
         &  +\sum_{i,j=1}^n \Big( 
          \Bigl\langle g, f(a_j) h(b_i) ( b_j \cdot ( a_i \cdot x))\Bigr\rangle
          -  \Bigl\langle g, f(a_j) h(b_i)  (b_j \cdot (x\cdot a_i))\Bigr\rangle
          +  \Bigl\langle g, f(a_j) h(b_i \cdot x) (b_j \cdot a_i)\Bigr\rangle
        -  \Bigl\langle g, f(a_j) h(x \cdot b_i)( b_j \cdot a_i)\Bigr\rangle \Big)\\
        =& \Bigl\langle f, \ad_x(\calR(h))\calR(g) \Bigr\rangle  - \Bigl\langle f, \calR(\ad_x^\ast(h)) \calR(g) \Bigr\rangle  + \Bigl\langle g, \calR(f) \ad_x(\calR(h)) \Bigr\rangle - \Bigl\langle g, \calR(f) \calR(\ad_x^\ast (h)) \Bigr\rangle.
\end{align*}
Thus Condition (2) of Theorem \ref{teo: coboundary NALR-algebra}  is equivalent to
\begin{equation*}
    \Bigl\langle f, \ad_x(\calR(h))\calR(g) \Bigr\rangle  - \Bigl\langle f, \calR(\ad_x^\ast(h)) \calR(g) \Bigr\rangle  + \Bigl\langle g, \calR(f) \ad_x(\calR(h)) \Bigr\rangle - \Bigl\langle g, \calR(f) \calR(\ad_x^\ast (h)) \Bigr\rangle=0.
\end{equation*}

Now, to rewrite Condition (3) of Theorem \ref{teo: coboundary NALR-algebra} in terms of  $\mathcal{R}$, we recall that,
 for  $ x,y \in \A$,
\begin{equation*}
 \begin{split}
 & \big((\rmL_x \otimes \rmR_y)- (\rmR_x \otimes \rmL_y)\big)  (r)=  \sum_{i =1}^n
\big( (x \cdot a_i) \otimes  (b_i \cdot y )   -  (a_i \cdot x) \otimes
(y \cdot b_i)  \big).
\end{split}
\end{equation*}
Hence, for all $\displaystyle f, g   \in \A^\ast $,
\begin{equation*}
 \begin{split}
 & (f\otimes g ) \Big( \big((\rmL_x \otimes \rmR_y)- (\rmR_x \otimes \rmL_y)\big)  (r)  \Big)= 
\Bigl\langle \rmL_y^\ast (g), \mathcal{R}(\rmR_x^\ast (f))  \Bigr\rangle
 -\Bigl\langle \rmR_y^\ast (g), \mathcal{R}(\rmL_x^\ast (f))  \Bigr\rangle.
\end{split}
\end{equation*}
 Then Condition (3) of Theorem \ref{teo: coboundary NALR-algebra} is equivalent to
 \begin{eqnarray*}
\displaystyle \Bigl\langle \rmL_y^\ast (g), \mathcal{R}(\rmR_x^\ast (f))  \Bigr\rangle
 -\Bigl\langle \rmR_y^\ast (g), \mathcal{R}(\rmL_x^\ast (f))  \Bigr\rangle= 0,&&
\end{eqnarray*}
Similarly, Condition (5) of Theorem \ref{teo: coboundary NALR-algebra} is equivalent to
 \begin{eqnarray*}
\displaystyle \Bigl\langle \rmR_x^\ast (g), \mathcal{R}(\rmR_y^\ast (f))  \Bigr\rangle
 -\Bigl\langle \rmL_x^\ast (g), \mathcal{R}(\rmL_y^\ast (f))  \Bigr\rangle= 0.&&
\end{eqnarray*}

Now to express Condition (4) of Theorem \ref{teo: coboundary NALR-algebra} in terms of  $\mathcal{R}$, we have 
 for  $ x,y \in \A$, 
\begin{equation*}
 \begin{split}
 & (\ad_y \rmR_x  \otimes \mbox{I} - \mbox{I} \otimes  \rmR_x \ad_y  )(r)=  
 \sum_{i=1}^n \Big( \big(y\cdot (a_i \cdot x)\big) \otimes b_i
  - \big(( a_i \cdot x)\cdot y \big) \otimes b_i
 -a_i \otimes \big((y\cdot b_i)\cdot x \big)  +a_i \otimes \big(( b_i \cdot y)\cdot x \big)\Big).
\end{split}
\end{equation*}
Then, for all $\displaystyle f, g   \in \A^\ast $,
\begin{equation*}
 \begin{split}
 & (f\otimes g  ) \Big( (\ad_y \rmR_x  \otimes \mbox{I} - \mbox{I} \otimes  \rmR_x \ad_y  )(r) \Big) =  - \Bigl\langle g, \calR \big(\rmL_x^\ast \ad_y^\ast(f)\big) \Bigr\rangle  +  \Bigl\langle f, \calR \big(\ad_y^\ast L_x^\ast (g)\big) \Bigr\rangle.
\end{split}
\end{equation*} 
Thus (4) of Theorem \ref{teo: coboundary NALR-algebra} is equivalent to the condition
\begin{equation*}
   \Bigl\langle f, \calR \big(\ad_y^\ast L_x^\ast (g)\big) \Bigr\rangle - \Bigl\langle g, \calR \big(\rmL_x^\ast \ad_y^\ast(f)\big) \Bigr\rangle  =0.
\end{equation*}

Finally, we deal with Condition (6) of Theorem \ref{teo: coboundary NALR-algebra}. Since for $x,y\in \A$ 
\begin{equation*}
    (\mbox{I} \otimes \mbox{I} -\tau)(  \ad_{x\cdot y}  \otimes \mbox{I} )(r)=  \sum_{i=1}^n \Big( \big((x\cdot y)\cdot a_i \big) \otimes b_i
  - \big(a_i \cdot(x\cdot y)\big) \otimes b_i
 + a_i \otimes \big((x\cdot y)\cdot b_i \big)
 - a_i \otimes \big(b_i \cdot(x\cdot y)\big) 
 \Big),
\end{equation*}
we have for all $\displaystyle f, g   \in \A^\ast $,
\begin{align*}
    (f\otimes g) \Big((\mbox{I} \otimes \mbox{I} -\tau)(  \ad_{x\cdot y}  \otimes \mbox{I} )(r)\Big)
    = -  \Bigl\langle g, \calR\big(\ad_{x\cdot y}^\ast (f)\big) \Bigr\rangle + \Bigl\langle f, \calR\big(\ad_{x\cdot y}^\ast(g)\big) \Bigr\rangle.
\end{align*}
Then Condition (6) of Theorem \ref{teo: coboundary NALR-algebra} is equivalent to
\begin{equation*}
     \Bigl\langle f, \calR \big(\ad_{x\cdot y}^\ast(g)\big) \Bigr\rangle  -  \Bigl\langle g, \calR \big(\ad_{x\cdot y}^\ast (f)\big) \Bigr\rangle =0,
\end{equation*}
completing the proof.
\end{proof}


 \bigskip


\begin{thebibliography}{99}
 {\footnotesize

 
\bibitem{AKS}
H. Abdelwahab, I. Kaygorodov, B. Sartayev,
{\em Shift associative algebras}, 2024,
arXiv:2408.07078 [math.RA].
 
 \bibitem{Agu2000} M. Aguiar, {\em Infinitesimal Hopf algebras}, New trends in Hopf algebra theory (La Falda, 1999), 1--29, {\em Contemp. Math.}, 267, Amer. Math. Soc., Providence, RI, 2000.
 
 \bibitem{Agu2001} M. Aguiar, {\em On the associative analog of Lie bialgebras}, J. Algebra, {\bf 244}  (2001), no. 2, 492--532.
 




\bibitem{Bai2008} C. Bai, {\em Left-symmetric bialgebras and an analogue of the classical Yang-Baxter equation}, Commun. Contemp. Math. \textbf{10} (2008), no. 2, 221--260.

 


\bibitem{BalNovi1985} A.A. Balinskii, S.P. Novikov, {\em Poisson brackets of hydrodynamics type, Frobenius algebras and Lie algebras}, Soviet Math. Dokl. {\bf 32} (1985), 228--231.




 
 
 \bibitem{BB2009} E. Barreiro, S. Benayadi, {\em Quadratic symplectic Lie superalgebras and Lie bi-superalgebras}, J. Algebra \textbf{321} (2009), 582--608.

 \bibitem{BB2016} E. Barreiro, S. Benayadi, {\em A new approach to Leibniz bialgebras}, Algebr Represent Theor, {\bf 19} (2016),  71--101.



\bibitem{BBR2024} E. Barreiro, S. Benayadi, C. Rizzo, 
{\em Nearly associative algebras},  J. Algebra, \textbf{658} (2024),  821--868.



 
 
 

\bibitem{Burde} D. Burde, K. Dekimpe, S. Deschamps, {\em $LR$-algebras}, New developments in Lie theory and geometry, 125–140, Contemp. Math., 491, Amer. Math. Soc., Providence, RI, 2009.


\bibitem{Cayley1857}  A. Cayley, {\em  On the theory of analytical forms called trees}, Phil. Mag. 13, {\bf 85} (1857), 19–30. Collected Math. Papers, University Press, Cambridge, Vol. 3 (1890), 242--246.
 


 
 


 \bibitem{DH2016} M.L. Dassoundo, M.N. Hounkonnou, {\em Center-symmetric algebras and bialgebras: relevant properties and consequences}, Geometric methods in physics, 281–-293, Trends Math., Birkhäuser/Springer, [Cham], 2016.


  
 \bibitem{DS} M. L. Dassoundo, S. Silvestrov, {\em Nearly associative and nearly hom-associative algebras and bialgebras}.
 In: Silvestrov, S., Malyarenko, A. (eds) Non-commutative and non-associative algebra and analysis structures. SPAS 2019. Springer Proceedings in Mathematics \& Statistics, vol 426. Springer, Cham, (2023), 259--284.


\bibitem{Drensky2019} V. Drensky, {\em Varieties of bicommutative algebras}, Serdica Math. J. {\bf 45} (2019), 167--188.

\bibitem{Drensky2018} V. Drensky, B.K. Zhakhayev, {\em Noetherianity and Specht problem for varieties of bicommutative algebras}, J. Algebra {\bf 499} (2018), 570--582.


\bibitem{Dri1983} V. G. Drinfeld, {\em Hamiltonian structures on Lie groups, Lie bialgebras and the geometric meaning of classical Yang-Baxter equations}, (Russian) Dokl. Akad. Nauk SSSR  \textbf{268} (1983), no. 2, 285--287.


 



\bibitem{DIT2011} A.S. Dzhumadil’daev, N.A. Ismailov, K.M. Tulenbaev, {\em Free bicommutative algebras}, Serdica Math. J., {\bf 37} (2011), no. 1, 25--44.

\bibitem{DT2003} A.S. Dzhumadil'daev , K.M. Tulenbaev, {\em Bicommutative algebras}, Russ. Math. Surv., {\bf 58} (2003), 1196.



 





 
\bibitem{JR1995}  S. A. Joni, G. C. Rota, {\em Coalgebras and bialgebras in combinatorics}, Stud. Appl. Math.
\textbf{61} (1979), no. 2, 93--139.
Reprinted in Gian-Carlo Rota on Combinatorics: Introductory papers and commentaries (Joseph P.S. Kung, Ed.),
Birkh\"{a}user, Boston (1995).




 
 






 \bibitem{Zhe1997} V. N. Zhelyabin, {\em Jordan bialgebras and their connection with Lie bialgebras}, Algebra and Logic  \textbf{36} (1997), no. 1, 1--15.

 








}
 \end{thebibliography}
 \end{document}